\documentclass[reqno]{amsart}
\makeatletter
\def\BState{\State\hskip-\ALG@thistlm}
\makeatother
\usepackage{enumerate}
\usepackage{color}
\usepackage{url}

\newcommand{\newinf}{\mathop{\mathrm{inf}\vphantom{\mathrm{sup}}}}

\usepackage[margin=1.5in]{geometry}

\usepackage{stmaryrd}

\usepackage{amsthm}
\usepackage{amsmath,amsfonts,amstext,amsbsy,mathrsfs,amscd,exscale}
\usepackage{mathtools}
\usepackage{amssymb}
\usepackage{commath}
\usepackage{bigints}
\usepackage{verbatim}
\usepackage{romanbar}
\usepackage{algorithm2e}
\usepackage{MnSymbol}
\usepackage{graphics, epstopdf}

\newtheorem{thm}{Theorem}[section]

\newtheorem{prop}{Proposition}[section]
\newtheorem{cor}{Corollary}[section]
\newtheorem{remark}{Remark}[section]

\newtheorem{lemma}{Lemma}[section]

\newcommand{\vc}{\mathbf{c}}

\newcommand{\vx}{\mathbf{x}}
\newcommand{\vy}{\mathbf{y}}
\newcommand{\vz}{\mathbf{z}}

\newcommand{\vv}{\mathbf{v}}
\newcommand{\vw}{\mathbf{w}}
\newcommand{\vf}{\mathbf{f}}
\newcommand{\vn}{\mathbf{n}}

\newcommand{\vvarphi}{\boldsymbol{\varphi}}
\newcommand{\vphi}{\boldsymbol{\phi}}
\newcommand{\vnu}{\boldsymbol{\nu}}

\newcommand{\jump}[1]{[ #1 ]} 
\newcommand{\avrg}[1]{\left\{ #1 \right\}} 
\newcommand{\Th}{\mathcal{T}_h} 
\newcommand{\Eh}{\mathcal{E}_h}
\newcommand{\E}{\mathcal{E}} 
\newcommand{\K}{T} 
\newcommand{\V}{\mathbb{V}} 
 
\newcommand{\h}{ {\rm h}} 
\newcommand{\Gh}{\Gamma_h}
\newcommand{\bz}{\boldsymbol{0}}

\newcommand{\I}{\normalfont{\Romanbar{1}}} 
\newcommand{\II}{\normalfont{\Romanbar{2}}} 
\newcommand{\g}{g} 
\newcommand{\tr}{{\rm tr}} 
\newcommand{\di}{\mathop{\rm div}\nolimits} 
\newcommand{\supp}{\mathop{\rm supp}\nolimits} 
\newcommand{\A}{\mathbb{A}}
\newcommand{\veps}{\varepsilon}
\newcommand{\wb}{\overline}
\newcommand{\wt}{\widetilde}
\newcommand{\wh}{\widehat}

\def\restriction#1#2{\mathchoice
	{\setbox1\hbox{${\displaystyle #1}_{\scriptstyle #2}$}
		\restrictionaux{#1}{#2}}
	{\setbox1\hbox{${\textstyle #1}_{\scriptstyle #2}$}
		\restrictionaux{#1}{#2}}
	{\setbox1\hbox{${\scriptstyle #1}_{\scriptscriptstyle #2}$}
		\restrictionaux{#1}{#2}}
	{\setbox1\hbox{${\scriptscriptstyle #1}_{\scriptscriptstyle #2}$}
		\restrictionaux{#1}{#2}}}
\def\restrictionaux#1#2{{#1\,\smash{\vrule height .8\ht1 depth .85\dp1}}_{\,#2}}

\numberwithin{equation}{section}

\title[LDG method for large deformations of prestrained plates]{Numerical analysis of the LDG method for large deformations of prestrained plates}
\author{Andrea Bonito}\thanks{(Andrea Bonito) Department of Mathematics, Texas A\&M University, College Station, TX 77843, USA. Email: bonito@tamu.edu.}
\author{Diane Guignard}\thanks{(Diane Guignard) Department of Mathematics and Statistics, University of Ottawa,
Ottawa, ON K1N 6N5, Canada. Email: dguignar@uottawa.ca.}
\author{Ricardo H. Nochetto}\thanks{(Ricardo H. Nochetto) Department of Mathematics and Institute for Physical Science
		and Technology, University of Maryland,
		College Park, Maryland 20742, USA. Email: rhn@umd.edu.}
\author{Shuo Yang}\thanks{(Shuo Yang) Yanqi Lake Beijing Institute of Mathematical Sciences and Applications, Beijing 101408, China, and Yau Mathematical Sciences Center, Tsinghua University, Beijing 100084, China. Email: shuoyang@bimsa.cn.}
\date{\today}

%%%%%%%%%%%%%%%%%%%%%%%%%%%%%%%%%%%%%%%%%%%%%%%%%%%%%%%%%%%%%%%%%%%%%%%%%%%%
\begin{document}
\maketitle

\begin{abstract} 
A local discontinuous Galerkin (LDG) method for approximating large deformations of prestrained plates is introduced and tested on several insightful numerical examples in \cite{BGNY2020_comp}. This paper presents a numerical analysis of this LDG method, focusing on the \emph{free boundary} case. The problem consists of minimizing a fourth order bending energy subject to a nonlinear and nonconvex metric constraint. The energy is discretized using LDG and a discrete gradient flow is used for computing discrete minimizers. We first show $\Gamma$-convergence of the discrete energy to the continuous one. Then we prove that the discrete gradient flow decreases the energy at each step and computes discrete minimizers with control of the metric constraint defect. We also present a numerical scheme for initialization of the gradient flow, and discuss the conditional stability of it.
\end{abstract}

\medskip
\keywords{\textbf{Keywords}: Prestrained materials; metric constraint; local discontinuous Galerkin; reconstructed Hessian; discrete gradient flow; free boundary conditions}
%%%%%%%%%%%%%%%%%%%%%%%%%%%%%%%%%%%%%%%%%%%%%%%%%%%%%%%%%%%%%%%%%%%%%%%%%%%
\section{Introduction}
%%%%%%%%%%%%%%%%%%%%%%%%%%%%%%%%%%%%%%%%%%%%%%%%%%%%%%%%%%%%%%%%%%%%%%%%%%%
Prestrained materials can develop internal stresses at rest, deform out of plane even without an external force, and exhibit nontrivial 3d shapes. This is a rich area of research with numerous applications for instance in nematic glasses \cite{modes2010disclination,modes2010gaussian}, natural growth of soft tissues \cite{goriely2005differential,yavari2010geometric} and manufactured polymer gels \cite{kim2012thermally,klein2007shaping,wu2013three}. 

Starting from 3d hyperelasticity, a geometric nonlinear and dimensionally reduced energy for isotropic prestrained plates where bending was the chief mechanism for deformation was proposed in \cite{efrati2009} and derived rigorously via $\Gamma$-convergence in \cite{lewicka2016}. The bending energy reads
\begin{equation}\label{E:reduced-bending}
  E(\vy) = \frac{\mu}{12}\int_{\Omega}\left|g^{-\frac{1}{2}}\II[\vy] g^{-\frac{1}{2}}\right|^2+\frac{\lambda}{2\mu+\lambda}\tr\left(g^{-\frac{1}{2}}\II[\vy] g^{-\frac{1}{2}} \right)^2
\end{equation}
and is subject to the nonlinear and nonconvex metric constraint
\begin{equation}\label{E:metric-constraint}
\I[\vy](\vx) = g(\vx) \quad \text{a.e. in } \Omega,
\end{equation}
where $\vy:\Omega\to\mathbb{R}^3$ is the deformation of the midplane $\Omega\subset\mathbb{R}^2$, $g:\Omega\to\mathbb{R}^{2\times 2}$ is a given symmetric positive definite matrix, and $\lambda$ and $\mu$ are Lam\'e parameters of the material. Hereafter, $\I[\vy]$ and $\II[\vy]$ denote respectively the first and second fundamental forms of the deformed plate $\vy(\Omega)$, namely
\begin{equation}\label{fund_forms}
\I[\vy]:=\nabla\vy^T\nabla\vy \quad \mbox{and} \quad \II[\vy]:=-\nabla\vnu^T\nabla\vy=(\partial_{ij}\vy\cdot\vnu)_{i,j=1}^2,
\end{equation}
where $\vnu:= \frac{\partial_1 \vy \times \partial_2 \vy}{|\partial_1 \vy \times \partial_2 \vy|}$ is the unit normal vector to the surface $\vy(\Omega)$. 
Moreover, $|\cdot|$ stands for the Frobenius norm.
Given an immersible metric $g$, our goal is to construct a deformation $\vy$ that minimizes \eqref{E:reduced-bending} subject to \eqref{E:metric-constraint}.

In \cite{BGNY2020_comp}, we depart from the 3d elastic energy of prestrained plates based on the Saint-Venant Kirchhoff energy density for classical isotropic materials and derive formally the 2d energy \eqref{E:reduced-bending} with a modified Kirchhoff-Love assumption. 
In the special case $g=I_2$ with $I_2$ the $2\times 2$ identity matrix (i.e., when $\vy$ is an isometry), thanks to the relations \cite{bartels2013,bonito2015,bonito2018} 
\begin{equation}\label{E:bilaplacian}
|\II[\vy]|=|D^2\vy|=|\Delta\vy|=\tr(\II[\vy]),
\end{equation}
\eqref{E:reduced-bending} and \eqref{E:metric-constraint} reduce to the nonlinear Kirchhoff plate model:
  minimize the energy
\begin{equation}\label{E:g=1}
E(\vy)=\frac{\alpha}{2}\int_{\Omega}|D^2\vy|^2, \qquad \alpha:=\frac{\mu(\mu+\lambda)}{3(2\mu+\lambda)},
\end{equation}
subject to the isometry constraint $\nabla\vy^T\nabla\vy=I_2$ a.e. in $\Omega$.
A formal derivation of \eqref{E:g=1} can be traced back to Kirchhoff in 1850, and an ansatz-free rigorous derivation was carried out in the seminal work of Friesecke, James, and M\"{u}ller \cite{frie2002b} via $\Gamma$-convergence.

%-------------------------------------------------------------------------
\subsection{Problem statement}
%-------------------------------------------------------------------------
The presence of the highly nonlinear quantity $\II[\vy]$ in the energy \eqref{E:reduced-bending} is an obstacle to the design of efficient numerical algorithms. Since for a general $g$ the relation \eqref{E:bilaplacian} does not hold, the energy given in \eqref{E:reduced-bending} cannot be reduced to \eqref{E:g=1}. However, thanks to Proposition \ref{prop:alt_energy} given in the appendix, we get an equivalent formulation by replacing the second fundamental form $\II[\vy]$ in \eqref{E:reduced-bending} by the Hessian $D^2\vy$ of the deformation.

We intend to study the approximation of the following constrained minimization problem
\begin{equation} \label{prob:min_Eg}
\min_{\vy\in\A} E(\vy),
\end{equation}
where
\begin{equation} \label{def:Eg_D2y}
E(\vy) :=\frac{\mu}{12}\sum_{m=1}^3\int_{\Omega}\left|\g^{-\frac{1}{2}}D^2y_m\g^{-\frac{1}{2}}\right|^2+\frac{\lambda}{2\mu+\lambda}\tr\left(\g^{-\frac{1}{2}}D^2y_m \g^{-\frac{1}{2}}\right)^2
\end{equation}
with $\vy = (y_m)_{m=1}^3$, and where the set of {\it admissible} functions is
\begin{equation} \label{def:admiss}
\A:=\left\{\vy\in [H^2(\Omega)]^3: \, \nabla\vy^T\nabla\vy=\g \quad \mbox{a.e. in } \Omega\right\}.
\end{equation}
Throughout this work, we assume that $g$ is {\it immersible} in $\mathbb{R}^3$, i.e., the admissible set $\A$ is not empty. 
In addition, we assume that $g\in[H^1(\Omega)\cap L^{\infty}(\Omega)]^{2\times 2}$. 

%---------------------------------------------------------------------------
\subsection{Numerical methods}
%---------------------------------------------------------------------------
In the case $g=I_2$, \eqref{E:g=1} is discretized with Kirchhoff finite elements in \cite{bartels2013} and symmetric interior penalty discontinuous Galerkin (SIPG) methods in \cite{bonito2018}.  For bilayer plates with isometry constraint, discretizations relying on Kirchhoff finite elements and on SIPG methods are proposed in \cite{bonito2015,bonito2017} and \cite{bonito2020discontinuous}, respectively. In our previous computational work \cite{BGNY2020_comp}, we consider \eqref{E:g=1} with a general immersible $g \ne I_2$, introduce a {\it local discontinuous Galerkin} (LDG) approach in which the Hessian $D^2\vy$ is replaced by a reconstructed Hessian $H_h(\vy_h)$, and explore the performance of LDG computationally. The present manuscript provides a mathematical justification of several properties of the algorithms in \cite{BGNY2020_comp}, such as convergence, energy decrease and metric defect control. 
 
For Kirchhoff finite elements, the discrete isometry constraint is imposed at the nodes of the mesh and Dirichlet boundary conditions are incorporated in the admissible set. 
They are based on polynomials with degree $k=3$ and require the computation of a discrete gradient, which may complicate the implementation of the method.

In both the LDG and SIPG approaches, the pointwise metric constraint is relaxed by imposing it on average over the elements, and any prescribed boundary conditions are imposed weakly via the \emph{Nitsche} approach, thereby allowing for more geometric flexibility. Furthermore, the method is well defined for polynomials with degree $k=2$ which are implemented in most standard finite element libraries.
For instance, we refer to the step-82 tutorial program \cite{BG21} for an implementation of the reconstructed Hessian in the \textrm{deal.ii} \cite{bangerth2007} library.
 
Compared to SIPG, LDG is conceptually simpler in that it uses $H_h(\vy_h)$ as a chief constituent of the method. Moreover, contrary to SIPG which requires sufficiently large stabilization parameters for stability, there is no such condition for LDG. Stability of LDG is ensured for any positive stabilization parameters, as proved in Theorem \ref{coercivity:plates} below. We refer to \cite[Section 3.1.1]{BGNY2020_comp} for further comments on the comparison of SIPG and LDG.

The LDG method was originally proposed in \cite{cockburn1998}. Motivated by the lifting and discrete gradient operator introduced in \cite{ern2010,ern2011}, the discrete Hessian
\begin{equation}\label{e:discrete-Hessian}
H_h(\vy_h) := D_h^2 \vy_h - R_h(\jump{\nabla_h\vy_h})+B_h(\jump{\vy_h})
\end{equation}
consists of three parts: the broken Hessian $D_h^2 \vy_h$, the lifting of the jumps of the broken gradient $\jump{\nabla_h\vy_h}$, and the lifting of the jumps $\jump{\vy_h}$ of $\vy_h$ itself; a precise definition is given in \eqref{def:discrHess} below. Lifting operators were initially introduced in \cite{bassi1997} and further analyzed in \cite{brezzi1999,brezzi2000}. It is worth mentioning that similar discrete Hessians are used in \cite{pryer} to study the convergence of dG for the bi-Laplacian and in \cite{bonito2018} to prove the $\Gamma$-convergence for plates with isometry constraint. In the present work, $H_h(\vy_h)$ is an integral part of the numerical method and not a mere theoretical device. A key property of $H_h(\vy_h)$ is consistency with integration by parts which yields its weak convergence in $L^2(\Omega)$ (see Lemma \ref{weak-conv} below).

%-------------------------------------------------------------------------
\subsection{Discrete problem} \label{subsec:discrete_intro}
%-------------------------------------------------------------------------

The LDG counterpart of the energy \eqref{def:Eg_D2y} reads
\begin{equation} \label{e:Eh_intro}
\begin{aligned}
  E_h(\vy_h) := & \frac{\mu}{12} \sum_{m=1}^3\int_{\Omega} \left|\g^{-\frac{1}{2}} \, H_h(y_{h,m}) \, \g^{-\frac{1}{2}} \right|^2
  \\ &+ \frac{\mu\lambda }{12(2\mu+\lambda)} \sum_{m=1}^3\int_{\Omega} \tr\left(\g^{-\frac{1}{2}} \, H_h(y_{h,m}) \, \g^{-\frac{1}{2}}\right)^2
 \\ &+\frac{\gamma_1}{2}\|\h^{-\frac{1}{2}}\jump{\nabla_h\vy_h}\|_{L^2(\Gh^0)}^2+\frac{\gamma_0}{2}\|\h^{-\frac{3}{2}}\jump{\vy_h}\|_{L^2(\Gh^0)}^2,
\end{aligned}
\end{equation}
where $\vy_h=(y_{h,m})_{m=1}^3\in[\V_h^k]^3$ is the discrete deformation over
a shape-regular mesh $\Th$ of $\Omega$ (in the sense given in Subsection \ref{subsec:mesh}), $H_h(\vy_h)$ is given in \eqref{e:discrete-Hessian},
$\gamma_0,\gamma_1>0$ are stabilization parameters, and $\Gh^0$ is the skeleton of $\Th$
defined in \eqref{e:skeleton}. Given a parameter $\veps>0$ so that
$\veps\to0$ as $h\to 0$, the {\it discrete admissible set} is
\begin{equation}\label{e:Ah_intro}
\A_{h,\veps}^k:=\Big\{\vy_h\in [\V^k_h]^3: \quad D_h(\vy_h)\leq \veps\Big\},
\end{equation}
where
\begin{equation} \label{def:PD_intro}
D_h(\vy_h) := \sum_{\K\in\Th}\left|\int_{\K} \nabla\vy_h^T\nabla\vy_h-g\right|
\end{equation}
is the {\it metric defect}, also called \emph{prestrain defect} in what follows. Note that $\A_{h,\veps}^k$ is nonconvex. Finally, the discrete counterpart of \eqref{prob:min_Eg} reads
\begin{equation} \label{prob:min_Eg_h_intro}
\min_{\vy_h\in\A_{h,\veps}^k} E_h(\vy_h).
\end{equation}
This is a nonconvex energy minimization due to \eqref{e:Ah_intro} 
and is discussed in detail in Section~\ref{sec:discrete}.

%--------------------------------------------------------------------------
\subsection{Contributions and outline}
%--------------------------------------------------------------------------

In this article, we analyze the algorithms proposed in \cite{BGNY2020_comp}. Following \cite{bartels2013,bonito2015,bonito2018}, we develop a $\Gamma$-convergence theory and show that (up to a subsequence) the discrete global minimizers of the discrete energy \eqref{prob:min_Eg_h_intro} converge to global minimizers of the continuous counterpart \eqref{prob:min_Eg}. 
We focus on the \emph{free boundary} case (no Dirichlet boundary conditions imposed), which is not considered in previous numerical analysis works on large deformation of plates with metric constraint \cite{bartels2013,bonito2015,bonito2018}. We then examine the discrete $H^2$-gradient flow with linearized metric constraint proposed in \cite{BGNY2020_comp}, and prove that the discrete energy decreases at each step while the metric defect is kept under control. Deformations in the \emph{free boundary} case are defined up to rigid motions which requires the addition of an $L^2$ term in the gradient flow metric. Last but not least, we study the behavior of (a generalization of) the preprocessing algorithm proposed in \cite{BGNY2020_comp} and designed to construct an initial deformation for the main gradient flow with a small prestrain defect. An error analysis is out of reach partly due to poor understanding of the nonconvex constraint \eqref{E:metric-constraint} and lack of characterization of immersible metrics.

The rest of the article is organized as follows. In Section \ref{sec:discrete}, we introduce the (\emph{broken}) finite element spaces and prove preliminary key properties for discrete functions, such as Poincar\'{e}-Friedrichs type inequalities and a compactness result. The discrete Hessian operator $H_h$ is discussed in Subsection \ref{subsec:H_h} together with weak and strong convergence properties, bounds on the lifting operators, and an equivalence relation crucial to prove the coercivity of the discrete energy. In Section \ref{sec:Eh_Dh}, we define the discrete problem briefly introduced in Subsection \ref{subsec:discrete_intro} above and investigate its properties. The proof of $\Gamma$-convergence of the discrete energy to the exact one is the content of Section \ref{sec:Gamma}. In Section \ref{sec:GF}, we recall the gradient flow scheme used in \cite{BGNY2020_comp} to solve the discrete problem, prove its unconditional stability and show how the prestrain defect is controlled throughout the flow.  The preprocessing algorithm is discussed in Section \ref{sec:preprocessing}. The equivalence between the energy \eqref{E:reduced-bending} and \eqref{def:Eg_D2y}, where the second fundamental forms are replaced by Hessians is the subject of Appendix~\ref{a:alternate}. For completeness, we also discuss in Appendix~\ref{sec:Dirichlet_BC} how the theory for \emph{free boundary} conditions can be extended to settings where Dirichlet boundary conditions are imposed on a portion $\Gamma^D\neq \emptyset$ of the boundary $\partial\Omega$ and where the plate is subject to external forces.

The notations $A\lesssim B$ and $A\sim B$ used throughout stand for $A\leq CB$ and $cB\leq A\leq CB$, where $c,C$ are constants independent of the discretization parameters $h,\veps$ and $\tau$.

%%%%%%%%%%%%%%%%%%%%%%%%%%%%%%%%%%%%%%%%%%%%%%%%%%%%%%%%%%%%%%%%%%%%%%%%%%%
\section{Discontinuous finite elements} \label{sec:discrete}
%%%%%%%%%%%%%%%%%%%%%%%%%%%%%%%%%%%%%%%%%%%%%%%%%%%%%%%%%%%%%%%%%%%%%%%%%%%

%--------------------------------------------------------------------------
\subsection{Subdivisions} \label{subsec:mesh}
%--------------------------------------------------------------------------
Let $\Omega \subset \mathbb R^2$ be a polygonal domain and consider  a sequence $\{\Th\}_{h>0}$ of shape-regular conforming partitions of $\Omega$ made of either triangles or quadrilaterals $\K$ of diameter $h_{\K}\leq h$.
Let $\Eh^0$ be the set of interior edges and $\Gamma_h^0$ be the interior skeleton of $\Th$
\begin{equation}\label{e:skeleton}
  \Gamma_h^0:=\{ \mathbf{x} \in e \, : \, e\in \E_h^0\}.
\end{equation}
For triangles, the family of meshes $\{\Th\}_{h>0}$ is assumed to be regular in the sense of Ciarlet \cite{ciarlet1991}: there exists a constant $\varrho>0$ independent of $h$ such that
\begin{equation}\label{def:regular_triangle}
\frac{h_{\K}}{\rho_{\K}} \le \varrho\qquad \forall\, \K \in \Th,
\end{equation}
where $\rho_\K$ denotes the diameter of the largest ball inscribed in $\K$. For quadrilaterals, we assume that the elements are convex and that the subtriangles obtained by bisecting an element along each of its diagonals satisfy \eqref{def:regular_triangle}.
Under these regularity conditions, there is an invertible affine (resp. bi-affine) mapping $F_{\K}:\widehat\K\to\K$ that maps the reference unit triangle (resp. square) $\widehat\K$ onto $\K$ and which satisfies
\begin{equation}\label{rel:shape_regular}
\|DF_{\K}\|_{L^{\infty}(\hat\K)} \lesssim  h_{\K}, \qquad \|DF_T^{-1}\|_{L^{\infty}(\K)} \lesssim h_{\K}^{-1},
\end{equation}
where $DF_{\K}$ and $DF^{-1}_{\K}$ denote the Jacobian matrices of $F_{\K}$ and $F^{-1}_{\K}$, respectively; see for instance \cite{ciarlet1991,GR86}.
In order to simplify the notation, we use a mesh function $\h$ such that $h_\K \lesssim \restriction{\h}{\K} \lesssim h_\K$  for all $\K \in \Th$ and $h_e \lesssim   \restriction{\h}{e} \lesssim h_e$ for all $e \in \E_h$.

%------------------------------------------------------------------------------
\subsection{Broken spaces}
%------------------------------------------------------------------------------

Let $k\geq 0$ and $\mathbb{P}_k$ (resp. $\mathbb{Q}_k$) be the space of polynomials of total degree at most $k$ (resp. of degree at most $k$ in each variable). 
Each component of the deformation $\vy$ is approximated by functions from the (\textit{broken}) finite element space
\begin{equation} \label{def:Vhk_tri}
\V_h^k:=\left\{v_h\in L^2(\Omega): \,\, \restriction{v_h}{\K}\circ F_{\K}\in\mathbb{P}_k \,\, (\mbox{resp. } \mathbb{Q}_k) \quad \forall\, \K \in\Th \right\}
\end{equation}
when $\Th$ is made of triangles (resp. quadrilaterals).
In view of the energy \eqref{e:Eh_intro}, we require from now on that $k\geq 2$.
Throughout this work, functions with values in $\mathbb R^3$ are written with bold symbols and subindices indicate their components; for example, $y_{h,m} \in \V_h^k$, $m=1,2,3$ are the components of $\vy_h \in [\V_h^k]^3$. The broken gradient of a scalar function $v_h\in\V_h^k$ is given by $\nabla_h v_h$.
We use a similar notation for other piecewise differential operators, for instance $D_h^2 v_h=\nabla_h\nabla_h v_h$ denotes the broken Hessian. For vector-valued functions these operators are computed component-wise.

We now introduce the jump and average operators. To this end, let $\vn_e$ be a unit normal to $e\in\Eh^0$ (the orientation is chosen arbitrarily but is fixed once for all). For $v_h \in \V_h^k$ and $e \in \E_h^0$, let $v_h^{\pm}(\vx):=\lim_{s\rightarrow 0^+}v_h(\vx\pm s\vn_e)$ for any $\vx \in e$, and set
\begin{equation} \label{def:jump-avrg}
\restriction{\jump{v_h}}{e} := v_h^{-}-v_h^+, \qquad \restriction{\avrg{v_h}}{e} := \frac{1}{2}(v_h^{+}+v_h^{-}).
\end{equation}
The jumps and averages of non-scalar functions are computed component-wise.

%--------------------------------------------------------------------------
\subsection{Discrete Poincar\'e-Friedrichs type inequalities and compactness}
%--------------------------------------------------------------------------

We introduce the mesh-dependent bilinear form $\langle\cdot,\cdot\rangle_{H_h^2(\Omega)}$ defined for any $v_h,w_h\in \V_h^k$ by
\begin{equation} \label{def:H2bilinear}
\begin{aligned}
  \langle v_h,w_h\rangle_{H_h^2(\Omega)} := & (D^2_h v_h,D^2_h w_h)_{L^2(\Omega)}
  \\& +(\h^{-1}\jump{\nabla_h v_h},\jump{\nabla_h w_h})_{L^2(\Gh^0)}+(\h^{-3}\jump{v_h},\jump{w_h})_{L^2(\Gh^0)},
\end{aligned}
\end{equation}
where $\Gh^0$ is defined in \eqref{e:skeleton}. Hereafter,
$(\cdot,\cdot)_{L^2(\varpi)}$ denotes the $L^2(\varpi):=L^2(\varpi; d\mathbf{x})$ inner product associate with the Lebesgue measure $d\mathbf{x}$ on $\mathbb R$ or $\mathbb R^2$ depending on whether $\varpi$ is a measurable set of dimension $1$ or $2$. 
We also define 
\begin{equation}\label{e:H2semi}
  |v_h|_{H_h^2(\Omega)}^2:=\langle v_h,v_h\rangle_{H_h^2(\Omega)}
  \qquad\forall \, v_h\in \V_h^k.
\end{equation}
Note the slight abuse of notation as $\langle \cdot,\cdot\rangle_{H_h^2(\Omega)}$ is not a scalar product; $|\cdot|_{H_h^2(\Omega)}$ is just a semi-norm.  For vector-valued functions $\vv_h, \vw_h \in [\V_h^k]^3$, we define
$\langle \vv_h,\vw_h\rangle_{H_h^2(\Omega)}:= \sum_{m=1}^3 \langle v_{h,m},w_{h,m}\rangle_{H_h^2(\Omega)}$ and similarly for $|\vv_h|_{H^2_h(\Omega)}$. 

In contrast to \eqref{def:H2bilinear}, we introduce the following scalar product and norm on $\V_h^k$
\begin{equation} \label{e:H2metric}
(v_h,w_h)_{H_h^2(\Omega)} := \langle v_h, w_h\rangle_{H_h^2(\Omega)}  + (v_h,w_h)_{L^2(\Omega)}, \qquad \|v_h\|_{H_h^2(\Omega)}^2:=(v_h,v_h)_{H_h^2(\Omega)}.
\end{equation}
They are critical to guarantee the unique solvability of the linear system arising in each step of the gradient flow algorithm of Section~\ref{sec:GF} and control of the metric defect \eqref{def:PD_intro}.

To derive discrete Poincar\'e-Friedrichs inequalities, we rely on the smoothing interpolation operator $\Pi_h:\mathbb{E}(\Th):=\prod_{T\in\Th}H^1(T) \to \V_h^k\cap H^1(\Omega)$ defined by $\Pi_h := I_h \circ P_h$, where $I_h$ is the Cl\'ement type interpolant proposed in \cite{bonito2010quasi} and $P_h$ is an element-wise $L^2(T)$ projection onto the restriction $\V_h^k(T)$ of $\V_h^k$ to $T\in \Th$. The domain $\mathbb{E}(\Th)$ of $\Pi_h$ is larger than $\V_h^{k}$ because the smoothing operator shall be employed on functions in $\nabla \V_h^k$. The latter are in general not in $\V_h^{k'}$ for any $k'\geq 0$ when $\V_h^{k}$ is based on quadrilateral elements; see \eqref{def:Vhk_tri}.

Before embarking on the proof of the discrete Poincar\'e-Friedrichs inequalities, we record several properties of $\Pi_h$. For any $v\in \mathbb{E}(\Th)$ we have
\begin{equation}\label{estimate-smoothing-interpolation-2}
\|\Pi_hv\|_{L^2(\Omega)}\lesssim\|v\|_{L^2(\Omega)},
\end{equation}
\begin{equation} \label{eqn:smooth_interp_H1}
\|\nabla \Pi_h v\|_{L^2(\Omega)} + \|\h^{-1}(v-\Pi_h v)\|_{L^2(\Omega)} \lesssim \|\nabla_h v\|_{L^2(\Omega)}+\|\h^{-\frac{1}{2}}\jump{v}\|_{L^2(\Gh^0)},
\end{equation}
and
\begin{equation}\label{estimate-smoothing-interpolation-1}
\|\h^{-1}(\nabla_hv-\nabla\Pi_hv)\|_{L^2(\Omega)}\lesssim\|D^2_hv\|_{L^2(\Omega)}+\|\h^{-\frac12}\jump{\nabla_hv}\|_{L^2(\Gamma^0_h)}+\|\h^{-\frac32}\jump{v}\|_{L^2(\Gamma^0_h)}.
\end{equation}
Estimate \eqref{estimate-smoothing-interpolation-2} follows from the $L^2(\Omega)$ stability of $I_h$ \cite{bonito2010quasi}, estimate \eqref{eqn:smooth_interp_H1} is guaranteed by Lemma 2.1 in \cite{bonito2018}  and similar arguments can be used to derive \eqref{estimate-smoothing-interpolation-1}.

We are now in position to derive the following discrete Poincar\'e-Friedrichs inequalities.
\begin{lemma}[discrete Poincar\'e-Friedrichs inequalities]\label{estimate-smoothing-interpolation}
For any $v \in \mathbb{E}(\Th)$ there holds
\begin{equation}\label{eqn:fp-smoothing}
\big\|v-\strokedint_{\Omega}v\big\|_{L^2(\Omega)} \lesssim\|\nabla_hv\|_{L^2(\Omega)}+\|\h^{-\frac{1}{2}}\jump{v}\|_{L^2(\Gh^0)},
\end{equation}
where $\strokedint_{\Omega}$ stands for the average over $\Omega$.
Moreover, for any $v_h\in \V_h^k$ there holds
\begin{equation} \label{eqn:bound_noBC}
\|\nabla_h v_h\|_{L^2(\Omega)}+ \|\nabla \Pi_h v_h\|_{L^2(\Omega)} \lesssim  \|v_h\|_{L^2(\Omega)}+|v_h|_{H_h^2(\Omega)}.
\end{equation}
\end{lemma}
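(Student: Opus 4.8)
The plan is to establish the two estimates in turn, using the conforming smoothing interpolant $\Pi_h$ as the bridge to the classical Poincar\'e--Wirtinger and trace inequalities on $\Omega$. For \eqref{eqn:fp-smoothing} I would, given $v\in\mathbb{E}(\Th)$, insert $\Pi_h v$ and its average and use the triangle inequality:
\[
\Big\|v-\strokedint_{\Omega}v\Big\|_{L^2(\Omega)}\le \|v-\Pi_h v\|_{L^2(\Omega)}+\Big\|\Pi_h v-\strokedint_{\Omega}\Pi_h v\Big\|_{L^2(\Omega)}+\Big\|\strokedint_{\Omega}(\Pi_h v-v)\Big\|_{L^2(\Omega)}.
\]
Since $\h\lesssim 1$ (as $h_\K\le h$ and $\Omega$ is bounded), the first term is $\lesssim\|\h^{-1}(v-\Pi_h v)\|_{L^2(\Omega)}$, and so is the third (after a Cauchy--Schwarz bound on the constant $\strokedint_{\Omega}(\Pi_h v-v)$); both are then controlled by the right-hand side of \eqref{eqn:fp-smoothing} via \eqref{eqn:smooth_interp_H1}. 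The middle term is handled by the continuous Poincar\'e--Wirtinger inequality on the bounded connected Lipschitz domain $\Omega$ (legitimate because $\Pi_h v\in H^1(\Omega)$), followed once more by \eqref{eqn:smooth_interp_H1}. This part is routine.

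For \eqref{eqn:bound_noBC} the idea is first to reduce $\|\nabla_h v_h\|_{L^2(\Omega)}$ to a bound on the net gradient $\int_\Omega\nabla_h v_h$, and then to control the latter through $\Pi_h v_h$ and an integration by parts on $\partial\Omega$. I would apply the (already established) inequality \eqref{eqn:fp-smoothing} componentwise to $\nabla_h v_h\in[\mathbb{E}(\Th)]^2$ — whose broken gradient is $D_h^2v_h$ and whose interface jumps are precisely $\jump{\nabla_h v_h}$ — to obtain $\|\nabla_h v_h-\strokedint_{\Omega}\nabla_h v_h\|_{L^2(\Omega)}\lesssim \|D_h^2 v_h\|_{L^2(\Omega)}+\|\h^{-1/2}\jump{\nabla_h v_h}\|_{L^2(\Gh^0)}\le |v_h|_{H_h^2(\Omega)}$, hence $\|\nabla_h v_h\|_{L^2(\Omega)}\lesssim |v_h|_{H_h^2(\Omega)}+\big|\int_\Omega\nabla_h v_h\big|$. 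Then I would split $\int_\Omega\nabla_h v_h=\int_\Omega\nabla\Pi_h v_h+\int_\Omega(\nabla_h v_h-\nabla\Pi_h v_h)$: the remainder is $\lesssim\|\nabla_h v_h-\nabla\Pi_h v_h\|_{L^2(\Omega)}\lesssim\|\h^{-1}(\nabla_h v_h-\nabla\Pi_h v_h)\|_{L^2(\Omega)}\lesssim|v_h|_{H_h^2(\Omega)}$ by \eqref{estimate-smoothing-interpolation-1}, while, using $\Pi_h v_h\in H^1(\Omega)$ together with $\int_{\partial\Omega}\vn=0$, one gets $\int_\Omega\nabla\Pi_h v_h=\int_{\partial\Omega}\big(\Pi_h v_h-\strokedint_{\Omega}\Pi_h v_h\big)\,\vn$.

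The hard part is to bound this last boundary term by $\|v_h\|_{L^2(\Omega)}+|v_h|_{H_h^2(\Omega)}$ plus a quantity absorbable into $\|\nabla_h v_h\|_{L^2(\Omega)}$ with an arbitrarily small constant; a direct element-wise integration by parts on $v_h$ itself would not suffice, since it forces a discrete trace inequality on $\partial\Omega$ and produces a non-absorbable factor $h_{\min}^{-1/2}\|v_h\|_{L^2(\Omega)}$, which is not uniformly bounded for graded meshes — this is exactly why the detour through the conforming $\Pi_h v_h$ is needed. I would then invoke the multiplicative trace inequality $\|w\|_{L^2(\partial\Omega)}^2\lesssim \|w\|_{L^2(\Omega)}\big(\|w\|_{L^2(\Omega)}+\|\nabla w\|_{L^2(\Omega)}\big)$ with $w=\Pi_h v_h-\strokedint_{\Omega}\Pi_h v_h$, noting that $\|w\|_{L^2(\Omega)}\lesssim\|\Pi_h v_h\|_{L^2(\Omega)}\lesssim\|v_h\|_{L^2(\Omega)}$ by \eqref{estimate-smoothing-interpolation-2} and $\|\nabla w\|_{L^2(\Omega)}=\|\nabla\Pi_h v_h\|_{L^2(\Omega)}\lesssim\|\nabla_h v_h\|_{L^2(\Omega)}+|v_h|_{H_h^2(\Omega)}$ by \eqref{eqn:smooth_interp_H1} (absorbing $\|\h^{-1/2}\jump{v_h}\|_{L^2(\Gh^0)}\lesssim|v_h|_{H_h^2(\Omega)}$ since $\h\lesssim 1$). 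Young's inequality then gives $\big|\int_\Omega\nabla\Pi_h v_h\big|\lesssim_{\delta} \|v_h\|_{L^2(\Omega)}+|v_h|_{H_h^2(\Omega)}+\delta\|\nabla_h v_h\|_{L^2(\Omega)}$ for every $\delta>0$; substituting back and fixing $\delta$ small enough (independently of $h$) to absorb the $\delta\|\nabla_h v_h\|_{L^2(\Omega)}$ term yields $\|\nabla_h v_h\|_{L^2(\Omega)}\lesssim\|v_h\|_{L^2(\Omega)}+|v_h|_{H_h^2(\Omega)}$. A final application of \eqref{eqn:smooth_interp_H1} bounds $\|\nabla\Pi_h v_h\|_{L^2(\Omega)}$ by the same right-hand side, which completes the proof.
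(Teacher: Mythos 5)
Your proposal is correct. The first estimate is proved exactly as in the paper (triangle inequality through $\Pi_h v$, continuous Poincar\'e--Wirtinger for $\Pi_h v\in H^1(\Omega)$, then \eqref{eqn:smooth_interp_H1}). For \eqref{eqn:bound_noBC} you follow the same overall strategy as the paper---reduce to the mean of the gradient, convert it to a boundary integral of the conforming interpolant, apply a trace inequality and absorb---but with a slightly different intermediate decomposition. The paper writes $\nabla_h v_h=\nabla\Pi_h v_h+\nabla_h(v_h-\Pi_h v_h)$ and applies \eqref{eqn:fp-smoothing} to the components of $\nabla\Pi_h v_h$, which forces it to prove the auxiliary estimate \eqref{estimate-smoothing-interpolation-3}, i.e.\ to bound $\|D_h^2\Pi_h v_h\|_{L^2(\Omega)}$ and $\|\h^{-1/2}\jump{\nabla\Pi_h v_h}\|_{L^2(\Gh^0)}$ by the corresponding quantities for $v_h$ via scaling and finite-dimensional norm-equivalence arguments (whose details the paper omits). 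You instead apply \eqref{eqn:fp-smoothing} directly to the components of $\nabla_h v_h\in[\mathbb{E}(\Th)]^2$, whose broken gradient and jumps are exactly $D_h^2 v_h$ and $\jump{\nabla_h v_h}$, so the mean-free part is controlled by $|v_h|_{H_h^2(\Omega)}$ with no extra work, and $\Pi_h$ enters only through \eqref{estimate-smoothing-interpolation-1} for the average term and through the boundary integral $\int_{\partial\Omega}(\Pi_h v_h-\fint_\Omega\Pi_h v_h)\,\vn$; your multiplicative trace inequality plus Young is equivalent to the paper's ``trace and Young'' step, and you absorb $\delta\|\nabla_h v_h\|_{L^2(\Omega)}$ rather than $\epsilon\|\nabla\Pi_h v_h\|_{L^2(\Omega)}$, recovering the $\|\nabla\Pi_h v_h\|_{L^2(\Omega)}$ bound at the end from \eqref{eqn:smooth_interp_H1}. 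The net effect is a modest simplification: your route avoids the paper's Step~2 scaling lemma entirely, at no cost in generality, while the paper's route keeps the conforming function throughout and obtains the bound on $\|\nabla\Pi_h v_h\|_{L^2(\Omega)}$ as a byproduct of the same chain of estimates.
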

\begin{proof} We split the proof in several steps.

\smallskip\noindent
{\it Step 1.} Let $v \in \mathbb{E}(\Th)$. The Cauchy-Schwarz inequality and a standard Poincar\'{e}-Friedrichs inequality for $\Pi_hv\in H^1(\Omega)$ yield
\begin{equation*}
\begin{split}
\big\|v-\strokedint_{\Omega}v\big\|_{L^2(\Omega)}&\le\|v-\Pi_hv\|_{L^2(\Omega)}+\big\|\Pi_h v-\strokedint_\Omega\Pi_hv\big\|_{L^2(\Omega)} +\big\|\strokedint_\Omega \left( v-\Pi_hv\right)\big\|_{L^2(\Omega)}\\
&\lesssim\|v-\Pi_hv\|_{L^2(\Omega)}+\|\nabla\Pi_hv\|_{L^2(\Omega)}.
\end{split}
\end{equation*}
Since $\restriction{\h}{T}\lesssim h_T\le {\rm diam}(\Omega)$ for all $\K\in\Th$, the first estimate \eqref{eqn:fp-smoothing} then directly follows from \eqref{eqn:smooth_interp_H1} with a hidden constant that depends on $\Omega$.

\smallskip\noindent
{\it Step 2.} We claim that for any $v_h \in \V_h^k$ there holds 
\begin{equation}\label{estimate-smoothing-interpolation-3}
\big\|\nabla\Pi_h v_h - \strokedint_\Omega \nabla\Pi_h v_h\big\|_{L^2(\Omega)}\lesssim\|D^2_h v_h\|_{L^2(\Omega)}+\|\h^{-\frac12}\jump{\nabla_h v_h}\|_{L^2(\Gamma^0_h)}+\|\h^{-\frac32}\jump{ v_h}\|_{L^2(\Gamma^0_h)}.
\end{equation}
To see this, we first employ \eqref{eqn:fp-smoothing} on each component of $\nabla\Pi_hv_h\in [\mathbb{E}(\Th)]^3$ to write
\begin{equation*}
\big\|\nabla\Pi_hv_h-\strokedint_{\Omega}\nabla\Pi_hv_h\big\|_{L^2(\Omega)}\lesssim\|D^2_h\Pi_hv_h\|_{L^2(\Omega)}+\|\h^{-\frac12}\jump{\nabla\Pi_hv_h}\|_{L^2(\Gamma^0_h)}.
\end{equation*}
Therefore, to obtain \eqref{estimate-smoothing-interpolation-3} it remains to show that
\begin{equation*}
\|D^2_h\Pi_hv_h\|_{L^2(\Omega)}+\|\h^{-\frac12}\jump{\nabla\Pi_hv_h}\|_{L^2(\Gamma^0_h)}\lesssim\|D^2_hv_h\|_{L^2(\Omega)}+\|\h^{-\frac12}\jump{\nabla_hv_h}\|_{L^2(\Gamma^0_h)}+\|\h^{-\frac32}\jump{v_h}\|_{L^2(\Gamma^0_h)},
\end{equation*}
which can be deduced from standard scaling arguments and equivalence of norms on finite dimensional spaces. 
The details are omitted but we refer to the proof of Lemma 6.6 in \cite{bonito2010quasi} for additional information. 

\smallskip\noindent
{\it Step 3.} We write  $\nabla_h v_h = \nabla \Pi_h  v_h + \nabla_h(v_h-\Pi_h v_h)$.
On the one hand, we infer from \eqref{estimate-smoothing-interpolation-1} that
\begin{equation*}
\|\nabla_h v_h-\nabla\Pi_h v_h\|_{L^2(\Omega)}\lesssim\|D^2_h v_h\|_{L^2(\Omega)}+\|\h^{-\frac12}\jump{\nabla_h v_h}\|_{L^2(\Gamma^0_h)}+\|\h^{-\frac32}\jump{v_h}\|_{L^2(\Gamma^0_h)}.
\end{equation*}
On the other hand, \eqref{estimate-smoothing-interpolation-3} implies
\begin{equation*}\label{intermediate-estimate}
\|\nabla\Pi_h v_h\|_{L^2(\Omega)}\lesssim\|D^2_h v_h\|_{L^2(\Omega)}+\|\h^{-\frac12}\jump{\nabla_h v_h}\|_{L^2(\Gamma^0_h)}+\|\h^{-\frac32}\jump{ v_h}\|_{L^2(\Gamma^0_h)}+\big\| \strokedint_{\Omega}\nabla\Pi_h v_h\big\|_{L^2(\Omega)},
\end{equation*}
whence, recalling the definition \eqref{e:H2semi} of $|\cdot|_{H^2_h(\Omega)}$, we arrive at
\begin{equation}\label{intermediate-estimate-2}
\|\nabla_h v_h\|_{L^2(\Omega)}+\|\nabla \Pi_h v_h\|_{L^2(\Omega)} \lesssim | v_h |_{H^2_h(\Omega)}+\big\| \strokedint_{\Omega}\nabla\Pi_h v_h\big\|_{L^2(\Omega)}.
\end{equation}
It remains to estimate $\| \strokedint_{\Omega}\nabla\Pi_h v_h\|_{L^2(\Omega)}$.
If $\vn_{\partial \Omega}$ denotes the outward unit normal vector to $\partial\Omega$, integrating by parts
\begin{equation*}
\int_{\Omega}\nabla\Pi_h v_h=\int_{\partial\Omega}(\Pi_h v_h)\vn_{\partial\Omega}
\end{equation*}
and combining Cauchy-Schwarz, trace and Young's inequalities, we obtain for any $\epsilon>0$ 
\begin{equation*}
\big\|\strokedint_{\Omega}\nabla\Pi_h v_h\big\|_{L^2(\Omega)}  \lesssim \|\Pi_h v_h\|_{L^2(\partial\Omega)}  \lesssim   \epsilon\|\nabla\Pi_h v_h\|_{L^2(\Omega)}+ \epsilon^{-1}\|\Pi_h v_h\|_{L^2(\Omega)}.
\end{equation*}
The desired estimate \eqref{eqn:bound_noBC} follows from the $L^2(\Omega)$ stability \eqref{estimate-smoothing-interpolation-2} of $\Pi_h$ and upon choosing $\epsilon$ sufficiently small so that the term $\epsilon\|\nabla\Pi_h v_h\|_{L^2(\Omega)}$ in the above estimate can be absorbed in the left-hand side of \eqref{intermediate-estimate-2}. This ends the proof.
\end{proof} 

We end this subsection with a compactness result for discrete balls
$$
\big\{ v_h \in  \V_h^k \ : \ \| \nabla_h v_h \|_{L^2(\Omega)} + | v_h |_{H^2_h(\Omega)} \lesssim 1 \big\}.
$$
As we shall see, the discrete energy \eqref{e:Eh_intro} provides control of the $|\cdot|_{H^2_h(\Omega)}$ semi-norm while a uniform bound for the broken $H^1(\Omega)$ semi-norm is guaranteed for functions in the discrete admissible set \eqref{e:Ah_intro}. 

\begin{lemma}[compactness]\label{l:compactness}
Assume that $\{v_h\}_{h>0}\subset \V_h^k$ is a sequence such that 
\begin{equation}\label{e:unif-bound}
 \| \nabla_h v_h \|_{L^2(\Omega)} + | v_h |_{H^2_h(\Omega)} \lesssim 1.
\end{equation}
Then there exists  $\wb v\in H^2(\Omega)$ with $\strokedint_{\Omega} \wb v = 0$ such that (up to a subsequence) $\wb v_h := v_h - \strokedint_{\Omega} v_h \to \bar v$ in $L^2(\Omega)$ and $\nabla_h \wb v_h\to\nabla \bar v$ in $[L^2(\Omega)]^3$ as $h \to0$.
\end{lemma}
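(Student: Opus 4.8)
The plan is to use the smoothing interpolant $\Pi_h$ to transfer compactness from the discontinuous setting to the conforming space $H^1(\Omega)$, and then bootstrap to $H^2(\Omega)$. First I would set $\wb v_h := v_h - \strokedint_\Omega v_h$ and observe that, by construction, $\strokedint_\Omega \wb v_h = 0$ and $\nabla_h \wb v_h = \nabla_h v_h$, $|\wb v_h|_{H^2_h(\Omega)} = |v_h|_{H^2_h(\Omega)}$, so \eqref{e:unif-bound} still holds for $\wb v_h$ with $\|\wb v_h\|_{L^2(\Omega)}\lesssim 1$ thanks to the discrete Poincar\'e-Friedrichs inequality \eqref{eqn:fp-smoothing}. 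Then I would introduce the conforming functions $w_h := \Pi_h \wb v_h \in \V_h^k \cap H^1(\Omega)$ and the vector fields $\bm{z}_h := \Pi_h(\nabla_h \wb v_h) \in [\V_h^k \cap H^1(\Omega)]^3$ (recall $\Pi_h$ acts on $\mathbb{E}(\Th) \supset \nabla_h \V_h^k$).

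Next I would collect the needed uniform bounds. From \eqref{estimate-smoothing-interpolation-3} and \eqref{eqn:bound_noBC} we get $\|w_h\|_{H^1(\Omega)} \lesssim 1$, and from \eqref{eqn:smooth_interp_H1} applied to each component of $\nabla_h \wb v_h$ together with \eqref{estimate-smoothing-interpolation-3} we obtain $\|\bm{z}_h\|_{H^1(\Omega)} \lesssim 1$. Moreover \eqref{estimate-smoothing-interpolation-1} gives $\|\h^{-1}(\nabla_h \wb v_h - \nabla w_h)\|_{L^2(\Omega)} \lesssim 1$, hence $\|\nabla w_h - \nabla_h \wb v_h\|_{L^2(\Omega)} \lesssim h \to 0$, and likewise \eqref{eqn:smooth_interp_H1} gives $\|\h^{-1}(\wb v_h - w_h)\|_{L^2(\Omega)} \lesssim 1$ so $\|w_h - \wb v_h\|_{L^2(\Omega)} \lesssim h \to 0$; by the same token $\|\bm{z}_h - \nabla_h \wb v_h\|_{L^2(\Omega)} \lesssim h \to 0$. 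Now invoke Rellich-Kondrachov: up to a subsequence, $w_h \to \wb v$ strongly in $L^2(\Omega)$ and weakly in $H^1(\Omega)$ for some $\wb v \in H^1(\Omega)$, and $\bm{z}_h \to \bm{z}$ strongly in $[L^2(\Omega)]^3$ and weakly in $[H^1(\Omega)]^3$ for some $\bm{z} \in [H^1(\Omega)]^3$. The strong $L^2$ closeness of $w_h$ to $\wb v_h$ transfers the convergence to $\wb v_h \to \wb v$ in $L^2(\Omega)$, and since $\strokedint_\Omega \wb v_h = 0$ we get $\strokedint_\Omega \wb v = 0$; similarly $\nabla_h \wb v_h \to \bm{z}$ in $[L^2(\Omega)]^3$.

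It remains to identify $\bm{z} = \nabla \wb v$ (which in particular promotes $\wb v$ to $H^2(\Omega)$, since $\bm{z} \in [H^1(\Omega)]^3$) and to upgrade weak to strong $H^1$-type convergence of $\wb v_h$. For the identification I would test: for any $\bm{\varphi} \in [C_c^\infty(\Omega)]^3$, write $\int_\Omega \nabla_h \wb v_h \cdot \bm{\varphi} = -\int_\Omega \wb v_h \, \di \bm{\varphi} + (\text{jump terms on } \Gh^0)$ from elementwise integration by parts; the jump terms are $\sum_{e}\int_e \jump{\wb v_h}\{\bm{\varphi}\cdot\bm{n}_e\}$ and are controlled by $\|\h^{-3/2}\jump{\wb v_h}\|_{L^2(\Gh^0)} \cdot \|\h^{3/2}\bm{\varphi}\|_{L^2(\Gh^0)} \lesssim |v_h|_{H^2_h(\Omega)} \cdot h \to 0$; passing to the limit using $\wb v_h \to \wb v$ and $\nabla_h \wb v_h \to \bm{z}$ in $L^2(\Omega)$ gives $\int_\Omega \bm{z}\cdot\bm{\varphi} = -\int_\Omega \wb v \,\di\bm{\varphi}$, i.e. $\bm{z} = \nabla \wb v$ in the distributional sense, hence $\wb v \in H^2(\Omega)$. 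Finally, since we already have $\nabla_h \wb v_h \to \nabla \wb v$ strongly in $[L^2(\Omega)]^3$ (the $L^2$ convergence of $\bm{z}_h$ was strong, and $\nabla_h\wb v_h$ is $L^2$-close to $\bm{z}_h$), the claim $\wb v_h \to \wb v$ in $L^2(\Omega)$ and $\nabla_h \wb v_h \to \nabla \wb v$ in $[L^2(\Omega)]^3$ follows. The main obstacle I anticipate is the bookkeeping around $\Pi_h$ acting on $\nabla_h \V_h^k$ (which need not be a polynomial space on quadrilaterals) and making sure the chain of estimates \eqref{estimate-smoothing-interpolation-2}--\eqref{estimate-smoothing-interpolation-1} is applied to the right arguments; the functional-analytic core (Rellich plus the distributional identification of the gradient) is routine once those bounds are in hand.
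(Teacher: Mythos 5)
Your proof is correct and follows essentially the same route as the paper's: both transfer compactness through the smoothing interpolant $\Pi_h$ via the estimates \eqref{estimate-smoothing-interpolation-2}--\eqref{estimate-smoothing-interpolation-1} and the discrete Poincar\'e--Friedrichs inequality \eqref{eqn:fp-smoothing}, apply Rellich to the resulting conforming $H^1$-bounded sequences, and use the $O(h)$ closeness to carry the strong $L^2$ convergence back to $\wb v_h$ and $\nabla_h \wb v_h$, bootstrapping to $\wb v\in H^2(\Omega)$. The only cosmetic difference is that you identify the limit of $\nabla_h\wb v_h$ as $\nabla\wb v$ by an explicit elementwise integration by parts with jump control, whereas the paper does this implicitly through uniqueness of weak limits, deferring the details to Steps 1--3 of Proposition 5.1 in \cite{bonito2018}.
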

\begin{proof}
We let $c_h:= \strokedint_{\Omega} v_h$ and invoke the Poincar\'e-Friedrichs inequality \eqref{eqn:fp-smoothing} to write
\begin{equation*}
\|v_h - c_h \|_{L^2(\Omega)}^2 \lesssim\|\nabla_h v_h\|_{L^2(\Omega)}^2+\|\h^{-\frac{1}{2}}\jump{v_h}\|_{L^2(\Gh^0)}^2 \lesssim 1.
\end{equation*}
This, together with the uniform boundedness assumption, implies
\begin{equation}\label{e:unifboundcompact}
\| v_h - c_h\|_{L^2(\Omega)} + \| \nabla_h v_h \|_{L^2(\Omega)} + \| D^2_h v_h\|_{L^2(\Omega)} \lesssim 1.
\end{equation}

With this bound being established, the rest of the proof readily follows step~1 - step~3 of Proposition 5.1 in \cite{bonito2018};
it is therefore only sketched here. 
The uniform bound \eqref{e:unifboundcompact} guarantees that $\wb v_h=v_h-c_h$ converges weakly (up to a subsequence) in $L^2(\Omega)$ to some $\wb v$.  
Setting $\wb z_h := \Pi_h v_h - \strokedint_{\Omega} \Pi_h v_h \in \V_h^k \cap H^1(\Omega)$, we invoke the Poincar\'e-Friedrichs inequality \eqref{eqn:fp-smoothing} coupled with the $H^1(\Omega)$ stability \eqref{eqn:smooth_interp_H1} of $\Pi_h$ to deduce that $\wb z_h$ is uniformly bounded in $H^1(\Omega)$. 
As a consequence, $\wb z_h$ converges strongly (up to a subsequence) in $L^2(\Omega)$ to some $\wb z \in H^1(\Omega)$. 
To show that $\wb v=\wb z$, we note that $ \| (v_h - c_h) - \wb z_h \|_{L^2(\Omega)} \to 0$ as $h \to 0$ because of
the interpolation property \eqref{estimate-smoothing-interpolation-1}, Poincar\'e-Friedrichs inequality \eqref{eqn:fp-smoothing} and the uniform boundedness \eqref{e:unif-bound}; hence,
$$
\|( v_h - c_h) -  \wb z \|_{L^2(\Omega)} \leq \|( v_h - c_h) - \wb z_h \|_{L^2(\Omega)} + \|\wb z_h - \wb z \|_{L^2(\Omega)} \to 0 \qquad \mbox{as } h\to 0.
$$
The uniqueness of weak limits guarantees that $\wb v = \wb z$ and thus $v_h-c_h$ strongly converges (up to a subsequence) in $L^2(\Omega)$ to $\wb v \in H^1(\Omega)$.
Repeating this argument for $\nabla_h \wb v_h$ yields that $\nabla_h \wb v_h$ converges strongly in $[L^2(\Omega)]^3$ (up to a subsequence) to $\nabla \wb v$ and $\wb v \in H^2(\Omega)$.
\end{proof}

%--------------------------------------------------------------------------
\subsection{Discrete Hessian} \label{subsec:H_h}
%--------------------------------------------------------------------------

The LDG approximation \eqref{e:Eh_intro} of the elastic energy \eqref{def:Eg_D2y} relies on the discrete approximation $H_h(\vy_h)\in\left[L^2(\Omega)\right]^{3\times 2\times 2}$ of the Hessian $D^2\vy$ introduced in \eqref{e:discrete-Hessian}; we now give a precise definition. The convergence of $E_h$ naturally depends on the convergence of the discrete Hessian towards $D^2 \vy$. The piecewise Hessian $D_h^2 \vy_h$ alone does not contain enough information and cannot be used as discrete approximation $H_h(\vy_h)$. In fact, the jumps of $\vy_h$ and $\nabla_h \vy_h$ must be accounted for. This is the purpose of the lifting operators.

Let $l_1,l_2$ be two non-negative integers, and consider the {\it local lifting operators} $r_e:[L^2(e)]^2\rightarrow[\V_h^{l_1}]^{2\times 2}$ and $b_e:L^2(e)\rightarrow[\V_h^{l_2}]^{2\times 2}$ defined for $e\in\Eh^0$ by
\begin{gather} 
r_e(\vphi) \in [\V_h^{l_1}]^{2\times 2}: \quad
\int_{\Omega}r_e(\vphi):\tau_h = \int_e\avrg{\tau_h}\vn_e\cdot\vphi \qquad \forall\, \tau_h\in [\V_h^{l_1}]^{2\times 2}\label{def:lift_re},
\\
b_e(\phi) \in [\V_h^{l_2}]^{2\times 2}: \quad
\int_{\Omega} b_e(\phi):\tau_h = \int_e\avrg{\di \tau_h}\cdot\vn_e\phi \qquad \forall\, \tau_h\in [\V_h^{l_2}]^{2\times 2} \label{def:lift_be};
\end{gather}
note that $\supp(r_e(\vphi))=\supp(b_e(\phi))=\omega_e$, the union of the two elements sharing $e$.
These lifting operators are extended to $[L^2(e)]^{3\times 2}=\left[[L^2(e)]^2\right]^3$ and $[L^2(e)]^3$, respectively, by component-wise applications. 
The {\it global lifting operators} are then given by
\begin{equation}\label{E:global-lifting}
R_h := \sum_{e\in\Eh^0} r_e : [L^2(\Gh^0)]^2 \rightarrow [\V_h^{l_1}]^{2\times 2},
\qquad
B_h := \sum_{e\in\Eh^0} b_e : L^2(\Gh^0) \rightarrow [\V_h^{l_2}]^{2\times 2}.
\end{equation} 
It is worth mentioning that this construction is simpler than the one in \cite{bonito2018} for quadrilaterals, which had to be defined on $D^2_h\V_h$ for the method to match the interior penalty discretization.
As a consequence, the weak convergence of the discrete Hessian considered in \cite{bonito2018} towards its corresponding exact Hessian requires a restrictive assumption on the sequence of subdivisions (see Proposition 4.3 in \cite{bonito2018}). This restriction is not needed in Lemma 2.4 below.

The following estimates for $R_h$ and $B_h$ can be found e.g. in \cite{brezzi2000,bonito2018}.
\begin{lemma}[stability of lifting operators]\label{L2bound-lifting}
For any $v_h\in\V_h^k$ and for any $l_1,l_2\ge 0$ we have 
\begin{equation*}
\|R_h(\jump{\nabla_h v_h})\|_{L^2(\Omega)}\lesssim \|\h^{-\frac12}\jump{\nabla_h v_h}\|_{L^2(\Gh^0)}, \qquad \|B_h(\jump{v_h})\|_{L^2(\Omega)}\lesssim \|\h^{-\frac32}\jump{v_h}\|_{L^2(\Gh^0)}.
\end{equation*}
\end{lemma}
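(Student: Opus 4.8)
The plan is to establish the two bounds separately but by the same mechanism, namely testing the defining relations \eqref{def:lift_re}--\eqref{def:lift_be} against a carefully chosen discrete tensor field, using a local inverse inequality to pass from edge to element norms, and exploiting the local support $\omega_e$ to undo the summation over interior edges.

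First I would treat $R_h(\jump{\nabla_h v_h})$. Since $R_h(\jump{\nabla_h v_h})\in[\V_h^{l_1}]^{2\times2}$, I can plug $\tau_h = R_h(\jump{\nabla_h v_h})$ itself into \eqref{def:lift_re} summed over $e\in\Eh^0$, giving
\begin{equation*}
\|R_h(\jump{\nabla_h v_h})\|_{L^2(\Omega)}^2 = \sum_{e\in\Eh^0}\int_e \avrg{R_h(\jump{\nabla_h v_h})}\vn_e\cdot\jump{\nabla_h v_h}.
\end{equation*}
Applying Cauchy--Schwarz on each edge and then the discrete trace (inverse) inequality $\|\avrg{\tau_h}\|_{L^2(e)}\lesssim h_e^{-1/2}\|\tau_h\|_{L^2(\omega_e)}$, valid on the shape-regular mesh of Subsection~\ref{subsec:mesh} for the polynomial space $\V_h^{l_1}$, the right-hand side is bounded by
\begin{equation*}
\sum_{e\in\Eh^0} \|\h^{-\frac12}\jump{\nabla_h v_h}\|_{L^2(e)}\,\|\h^{\frac12}\avrg{R_h(\jump{\nabla_h v_h})}\|_{L^2(e)} \lesssim \|\h^{-\frac12}\jump{\nabla_h v_h}\|_{L^2(\Gh^0)}\Big(\sum_{e\in\Eh^0}\|R_h(\jump{\nabla_h v_h})\|_{L^2(\omega_e)}^2\Big)^{\!\frac12}.
\end{equation*}
Because each element belongs to at most a fixed number (depending only on shape-regularity) of patches $\omega_e$, the last sum is $\lesssim \|R_h(\jump{\nabla_h v_h})\|_{L^2(\Omega)}^2$, and dividing through by $\|R_h(\jump{\nabla_h v_h})\|_{L^2(\Omega)}$ yields the first estimate. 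The bound for $B_h(\jump{v_h})$ is entirely analogous: testing \eqref{def:lift_be} with $\tau_h = B_h(\jump{v_h})$ and using the inverse estimate $\|\di\tau_h\|_{L^2(e)}\lesssim h_e^{-3/2}\|\tau_h\|_{L^2(\omega_e)}$ (one extra power of $h^{-1}$ for the divergence, another half-power for the trace) produces the weight $\h^{-3/2}$ on $\jump{v_h}$.

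The only genuinely delicate point is the validity of the discrete trace and inverse inequalities with the stated $h$-powers uniformly in $h$; this is where shape-regularity \eqref{def:regular_triangle} and the scaling bounds \eqref{rel:shape_regular} enter, via the standard pull-back to the reference element $\widehat\K$ and equivalence of norms on the finite-dimensional polynomial spaces $\mathbb{P}_{l_i}$ (resp. $\mathbb{Q}_{l_i}$). Since these are textbook facts and the estimates are already attributed to \cite{brezzi2000,bonito2018}, I would not reproduce the scaling computations in detail; instead I would state that the result follows by testing the lifting identities against the liftings themselves, combined with local inverse estimates and the finite-overlap property of the patches $\{\omega_e\}$, and refer the reader to those sources.
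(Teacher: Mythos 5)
Your proof is correct, and it is the standard argument: test the defining relation of the lifting operator against the lifting itself, apply Cauchy--Schwarz edge by edge, move from edges to elements via discrete trace (and, for $B_h$, inverse) inequalities, and absorb the finite overlap of the patches $\omega_e$. The paper itself does not give a proof of this lemma --- it simply cites \cite{brezzi2000,bonito2018} --- so the right comparison is with those sources, and your argument is essentially the one found there. One small remark worth making explicit if you were to write this out: the bound is valid for \emph{any} $l_1,l_2\ge 0$, and in the degenerate case $l_2=0$ the operator $B_h$ vanishes identically (since $\di\tau_h\equiv 0$ for piecewise constants), so the second estimate is trivially true there; your inverse estimate $\|\di\tau_h\|_{L^2(e)}\lesssim h_e^{-3/2}\|\tau_h\|_{L^2(\omega_e)}$ degenerates gracefully in that case.
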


As anticipated in \eqref{e:discrete-Hessian}, the discrete Hessian operator $H_h:\V_h^k\rightarrow\left[L^2(\Omega)\right]^{2\times 2}$ is defined as
\begin{equation} \label{def:discrHess}
H_h(v_h) := D_h^2 v_h -R_h(\jump{\nabla_hv_h})+B_h(\jump{v_h}).
\end{equation}
The definition \eqref{E:global-lifting} together with integration by parts of $D_h^2 v_h$ yields weak convergence of $H_h(v_h)$. It also gives strong convergence of $H_h(v_h)$ provided $v_h$ is the Lagrange interpolant of a given $v\in H^2(\Omega)$. These results are stated in Lemmas \ref{weak-conv}
and \ref{strong-conv} below, whose proofs are postponed to Appendix \ref{sec:weak_strong}.
Such results are rather standard for the discrete gradient operator in the LDG context \cite{ern2011} and extend to the discrete Hessian \cite{pryer,bonito2018}.

\begin{lemma}[weak convergence of $H_h$]\label{weak-conv}
	Let $\{ v_h\}_{h>0} \subset \V_h^k$ be such that $|v_h|_{H_h^2(\Omega)}\lesssim 1$ uniformly in $h$ and $v_h\to v$ in $L^2(\Omega)$ as $h\rightarrow 0$ for some $v \in H^2(\Omega)$. 
	Then for any polynomial degrees $l_1,l_2\ge0$ of $R_h$ and $B_h$, we have
	\begin{equation} \label{eqn:weakH}
	H_h(v_h)\rightharpoonup D^2 v \quad \mbox{in } \left[L^2(\Omega)\right]^{2\times 2} \qquad \mbox{as } h\rightarrow 0.
	\end{equation} 
\end{lemma}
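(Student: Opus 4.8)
\textbf{Proof plan for Lemma~\ref{weak-conv} (weak convergence of $H_h$).}
The plan is to test $H_h(v_h)$ against an arbitrary fixed smooth matrix field $\Phi \in [C_c^\infty(\Omega)]^{2\times 2}$ (and then extend to all of $[L^2(\Omega)]^{2\times 2}$ by density and the uniform bound $\|H_h(v_h)\|_{L^2(\Omega)}\lesssim 1$), and to show that $\int_\Omega H_h(v_h):\Phi \to \int_\Omega D^2v:\Phi$ as $h\to0$. First I would establish the uniform $L^2$ bound on $H_h(v_h)$: from the definition \eqref{def:discrHess}, the triangle inequality, and the lifting bounds of Lemma~\ref{L2bound-lifting}, one gets $\|H_h(v_h)\|_{L^2(\Omega)} \lesssim \|D_h^2 v_h\|_{L^2(\Omega)} + \|\h^{-1/2}\jump{\nabla_h v_h}\|_{L^2(\Gh^0)} + \|\h^{-3/2}\jump{v_h}\|_{L^2(\Gh^0)} \lesssim |v_h|_{H_h^2(\Omega)} \lesssim 1$. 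Hence a subsequence converges weakly to some limit, and it suffices to identify that limit as $D^2 v$ for all such subsequences.

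The core computation is integration by parts on each element. Writing $\int_\Omega D_h^2 v_h : \Phi = \sum_{T\in\Th}\int_T D^2 v_h : \Phi$ and integrating by parts twice on each $T$, the element-interior terms reassemble into $\int_\Omega v_h\, \mathrm{div}\,\mathrm{div}\,\Phi$ (which converges to $\int_\Omega v\, \mathrm{div}\,\mathrm{div}\,\Phi = \int_\Omega D^2 v : \Phi$ since $v_h\to v$ in $L^2(\Omega)$ and $v\in H^2(\Omega)$), plus interface terms on $\Gh^0$ involving $\jump{\nabla_h v_h}$ paired against $\avrg{\Phi}\vn_e$ and $\jump{v_h}$ paired against $\avrg{\di\Phi}\cdot\vn_e$ (boundary terms on $\partial\Omega$ vanish because $\Phi$ has compact support). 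By the very definitions \eqref{def:lift_re}–\eqref{def:lift_be} of the lifting operators, these interface contributions are \emph{exactly} $\int_\Omega R_h(\jump{\nabla_h v_h}):\Pi_h^{l_1}\Phi$ and $\int_\Omega B_h(\jump{v_h}):\Pi_h^{l_2}\Phi$, where $\Pi_h^{l_j}$ denotes the $L^2$-projection onto $[\V_h^{l_j}]^{2\times 2}$ (this projection is forced because the test functions in \eqref{def:lift_re}–\eqref{def:lift_be} must be discrete). Consequently $\int_\Omega H_h(v_h):\Phi = \int_\Omega v_h\,\mathrm{div}\,\mathrm{div}\,\Phi - \int_\Omega R_h(\jump{\nabla_h v_h}):(\Phi - \Pi_h^{l_1}\Phi) + \int_\Omega B_h(\jump{v_h}):(\Phi - \Pi_h^{l_2}\Phi)$.

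The final step is to show the two remainder terms vanish as $h\to0$. Using Cauchy–Schwarz, Lemma~\ref{L2bound-lifting}, and the uniform bound $|v_h|_{H_h^2(\Omega)}\lesssim1$, each remainder is bounded by $\lesssim \|\Phi - \Pi_h^{l_j}\Phi\|_{L^2(\Omega)}$, which tends to $0$ by standard approximation properties of the $L^2$-projection on shape-regular meshes since $\Phi$ is smooth (here one uses $l_j\ge0$, so at least piecewise constants are available — no lower bound on $l_j$ beyond $0$ is needed, and this is where the construction is simpler than in \cite{bonito2018}). Combining, $\int_\Omega H_h(v_h):\Phi \to \int_\Omega D^2 v:\Phi$ for every $\Phi\in[C_c^\infty(\Omega)]^{2\times2}$, and a density argument together with the uniform $L^2$ bound upgrades this to weak convergence in $[L^2(\Omega)]^{2\times2}$. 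I expect the only mildly delicate point to be the careful bookkeeping in the double integration by parts — tracking which jump/average pairs with which normal direction so that the interface terms match the lifting definitions verbatim — rather than any genuine analytic difficulty; the convergence $\Phi - \Pi_h^{l_j}\Phi \to 0$ and $v_h\to v$ are routine.
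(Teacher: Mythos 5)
Your overall strategy -- uniform $L^2(\Omega)$ bound on $H_h(v_h)$, testing against a smooth $\Phi$, element-wise double integration by parts, invoking the lifting definitions on a discrete approximation of $\Phi$, then density -- is the same as the paper's. However, the central identity you claim is false, and the error hides exactly the terms that carry the analytic work. Since $R_h(\jump{\nabla_h v_h})\in[\V_h^{l_1}]^{2\times 2}$ and $B_h(\jump{v_h})\in[\V_h^{l_2}]^{2\times 2}$, your two ``remainder'' terms $\int_\Omega R_h(\jump{\nabla_h v_h}):(\Phi-\Pi_h^{l_1}\Phi)$ and $\int_\Omega B_h(\jump{v_h}):(\Phi-\Pi_h^{l_2}\Phi)$ vanish identically by orthogonality of the $L^2$-projection, so your identity would assert $\int_\Omega H_h(v_h):\Phi=\int_\Omega v_h\,\di(\di\Phi)$ exactly, which is not true. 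The interface terms produced by integrating $\int_\Omega D_h^2 v_h:\Phi$ by parts involve the traces of $\Phi$ and $\di\Phi$ on the edges, whereas the liftings \eqref{def:lift_re}--\eqref{def:lift_be} tested against $\Pi_h^{l_j}\Phi$ only reproduce $\avrg{\Pi_h^{l_j}\Phi}$ and $\avrg{\di\,\Pi_h^{l_j}\Phi}$ there; the element-wise $L^2$-projection does not preserve edge traces, so the cancellation is \emph{not} exact. The correct identity (with the paper's sign conventions) is
\[
\int_\Omega H_h(v_h):\Phi=\int_\Omega v_h\,\di(\di\Phi)
+\sum_{e\in\Eh^0}\int_e\jump{\nabla_h v_h}\cdot\avrg{\Phi-\Pi_h^{l_1}\Phi}\vn_e
-\sum_{e\in\Eh^0}\int_e\jump{v_h}\,\avrg{\di(\Phi-\Pi_h^{l_2}\Phi)}\cdot\vn_e,
\]
and the two edge sums are precisely what is missing from your argument; they are the terms $T_4,T_5$ in the paper's proof (which uses a Lagrange interpolant instead of $\Pi_h^{l_j}$, so it additionally keeps harmless volume remainders analogous to your $T_2,T_3$).

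These edge terms do tend to zero, but not via ``$\|\Phi-\Pi_h^{l_j}\Phi\|_{L^2(\Omega)}\to 0$'': you must pass from edge to element norms with a scaled trace inequality such as $\|\Phi-\Pi_h^{l_j}\Phi\|_{L^2(e)}\lesssim\|\h^{-\frac12}(\Phi-\Pi_h^{l_j}\Phi)\|_{L^2(\omega_e)}+\|\h^{\frac12}\nabla(\Phi-\Pi_h^{l_j}\Phi)\|_{L^2(\omega_e)}$, then use approximation (and, where needed, inverse) estimates for $\Pi_h^{l_j}$ applied to the smooth $\Phi$, and finally pair with the uniform bounds $\|\h^{-\frac12}\jump{\nabla_h v_h}\|_{L^2(\Gh^0)}\lesssim 1$ and $\|\h^{-\frac32}\jump{v_h}\|_{L^2(\Gh^0)}\lesssim 1$ after inserting the weights $\h^{\pm\frac12}$, $\h^{\pm\frac32}$; this yields bounds of order $h$ for both sums and works for any $l_1,l_2\ge 0$. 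With these two terms restored and estimated in this way, your proof closes and coincides with the paper's argument, the only genuine difference being the cosmetic choice of $L^2$-projection versus Lagrange interpolant for the discrete approximation of $\Phi$; your bookkeeping of the uniform bound on $H_h(v_h)$, the convergence of the volume term, and the final density step are all fine.
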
 

\begin{lemma}[strong convergence of $H_h$]\label{strong-conv}
	Let $v\in H^2(\Omega)$ and let $v_h:=\mathcal{I}_h^k v \in\V^k_h \cap H^1(\Omega)$ be the Lagrange interpolant of $v$. Then for any polynomial degrees $l_1,l_2\ge0$ of $R_h$ and $B_h$, we have the following strong convergences in  $[L^2(\Omega)]^{2\times 2}$
	$$
D^2_h v_h \to D^2 v, \qquad R_h(\jump{\nabla_h v_h}) \to 0, \qquad
	B_h(\jump{ v_h}) \to 0 \qquad  \textrm{as }h\to 0.
	$$
	In particular, 
	\begin{equation} \label{eqn:H_strong}
	H_h(v_h)\to D^2 v \quad \mbox{in } \,\, [L^2(\Omega)]^{2\times 2} \qquad  \textrm{as }h\to 0.
	\end{equation} 	 
\end{lemma}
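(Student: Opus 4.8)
The plan is to establish the three strong convergences separately, then combine them via the triangle inequality in the definition \eqref{def:discrHess} of $H_h(v_h)$. The central tool is the combination of standard finite element interpolation estimates for the Lagrange interpolant $\mathcal I_h^k$ with the stability bounds for the lifting operators from Lemma \ref{L2bound-lifting}. Throughout, one works component-wise, so it suffices to treat a scalar $v\in H^2(\Omega)$.

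First I would handle the broken Hessian term. Since $v\in H^2(\Omega)$ and $\Omega$ is a polygon, density of smooth functions in $H^2(\Omega)$ lets one reduce to $v\in C^\infty(\wb\Omega)$ by an $\varepsilon$-argument: for smooth $v$ the standard estimate $\|D^2 v - D_h^2 \mathcal I_h^k v\|_{L^2(\Omega)} \lesssim h\,\|v\|_{H^3(\Omega)} \to 0$ holds, while for general $v\in H^2$ one picks $v_\delta$ smooth with $\|v-v_\delta\|_{H^2(\Omega)}<\delta$ and uses $H^2$-stability of $D_h^2\mathcal I_h^k$ (again by scaling and norm equivalence on the reference element) together with $\|D^2 v - D^2 v_\delta\|_{L^2}<\delta$ to conclude $D_h^2 v_h \to D^2 v$ in $[L^2(\Omega)]^{2\times2}$. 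Alternatively, and perhaps more cleanly, one notes $D_h^2 \mathcal I_h^k v$ is simply the elementwise $L^2$-best-type approximation behavior of the interpolant, so $\|D^2 v - D_h^2 v_h\|_{L^2} \le \|D^2 v - \Pi v\|_{L^2} + \|D_h^2(\Pi - v_h)\|_{L^2}$ for any smooth $\Pi$ close to $v$; either route works.

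Next I would show the two lifting terms vanish. By Lemma \ref{L2bound-lifting},
\[
\|R_h(\jump{\nabla_h v_h})\|_{L^2(\Omega)} \lesssim \|\h^{-1/2}\jump{\nabla_h v_h}\|_{L^2(\Gh^0)},
\qquad
\|B_h(\jump{v_h})\|_{L^2(\Omega)} \lesssim \|\h^{-3/2}\jump{v_h}\|_{L^2(\Gh^0)},
\]
so it suffices to bound these jump seminorms. Since $v\in H^2(\Omega)\subset H^1(\Omega)$, the true function has no jumps: $\jump{v}=0$ and $\jump{\nabla v}=0$ on $\Gh^0$ (the latter because $\nabla v\in [H^1(\Omega)]^2$ has a well-defined trace). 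Hence $\jump{v_h} = \jump{v_h - v}$ and $\jump{\nabla_h v_h} = \jump{\nabla_h v_h - \nabla v}$, and one estimates each edge contribution by a scaled trace inequality: $\|\h^{-3/2}\jump{v_h - v}\|_{L^2(\Gh^0)}^2 \lesssim \sum_T \big(h_T^{-4}\|v - v_h\|_{L^2(T)}^2 + h_T^{-2}\|\nabla(v - v_h)\|_{L^2(T)}^2\big)$ and similarly $\|\h^{-1/2}\jump{\nabla_h v_h - \nabla v}\|_{L^2(\Gh^0)}^2 \lesssim \sum_T \big(h_T^{-2}\|\nabla(v - v_h)\|_{L^2(T)}^2 + \|D^2 v - D_h^2 v_h\|_{L^2(T)}^2\big)$. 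Since $v_h = \mathcal I_h^k v$ with $k\ge 2$, the interpolation estimates $\|v - \mathcal I_h^k v\|_{L^2(T)} \lesssim h_T^2 |v|_{H^2(T)}$ and $\|\nabla(v - \mathcal I_h^k v)\|_{L^2(T)} \lesssim h_T |v|_{H^2(T)}$ make the first two sums $\lesssim \sum_T |v|_{H^2(T)}^2 = |v|_{H^2(\Omega)}^2$, which is finite but not obviously small — so this is where the main subtlety lies and where the smooth-approximation argument re-enters: for smooth $v$ one gains an extra power of $h$ (the estimates become $h_T^3$ and $h_T^2$), forcing the jump seminorms to $0$, and the $H^2$-density argument combined with uniform $H^2$-stability of all the operators involved upgrades this to general $v\in H^2(\Omega)$.

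The main obstacle, then, is not any single inequality but the bookkeeping of the density/stability argument: one must verify that $v\mapsto (D_h^2\mathcal I_h^k v,\ \h^{-1/2}\jump{\nabla_h\mathcal I_h^k v},\ \h^{-3/2}\jump{\mathcal I_h^k v})$ is bounded from $H^2(\Omega)$ into $L^2(\Omega)\times L^2(\Gh^0)\times L^2(\Gh^0)$ uniformly in $h$ (which follows from the trace and interpolation estimates above applied with $|v|_{H^2}$ on the right), so that the convergence proven for the dense class of smooth functions passes to the limit. Finally, combining the three pieces in \eqref{def:discrHess} gives $H_h(v_h) = D_h^2 v_h - R_h(\jump{\nabla_h v_h}) + B_h(\jump{v_h}) \to D^2 v - 0 + 0 = D^2 v$ strongly in $[L^2(\Omega)]^{2\times2}$, which is \eqref{eqn:H_strong}.
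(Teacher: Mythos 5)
Your proposal is correct, and its Step~1 (strong convergence of the broken Hessian via $H^2$-stability of $\mathcal I_h^k$ and density of smooth functions) matches the paper's Step~1 exactly. However, for the vanishing of the lifting terms you take a genuinely different and somewhat heavier route. You correctly diagnose the pitfall---naive interpolation estimates applied to $\|\h^{-3/2}\jump{v_h-v}\|_{L^2(\Gh^0)}$ and $\|\h^{-1/2}\jump{\nabla_h(v_h-v)}\|_{L^2(\Gh^0)}$ only give uniform boundedness in $|v|_{H^2(\Omega)}$, not $o(1)$---and you repair it by running the density argument a second time, together with a uniform $H^2$-stability bound for the jump functionals. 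That works, but the paper avoids the second density pass entirely by a sharper device: since $v_h=\mathcal I_h^k v$, the self-interpolation identity $\mathcal I_h^k(v_h-v)=0$ holds, so
\begin{equation*}
\|\h^{-1}\nabla_h(v_h-v)\|_{L^2(T)}=\|\h^{-1}\nabla_h\big((v_h-v)-\mathcal I_h^k(v_h-v)\big)\|_{L^2(T)}\lesssim \|D_h^2(v_h-v)\|_{L^2(T)},
\end{equation*}
by the interpolation estimate \eqref{interpolation-estimate} applied to $w=v_h-v\in H^2(T)$. Combined with the scaled trace inequality \eqref{traceineq}, this bounds both jump seminorms (and hence, via Lemma~\ref{L2bound-lifting}, the lifting terms) directly by $\|D_h^2 v_h-D^2 v\|_{L^2(\Omega)}$, which Step~1 has already shown to vanish. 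Both routes are valid; the paper's is more economical because it needs the approximation-by-smooth-functions argument only once and reduces the remaining terms to a quantity already proved small. Your final assembly of the three pieces through \eqref{def:discrHess} is the same as the paper's Step~3.
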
 

We end this subsection by showing that the quantity $ \| H_h(\cdot)\|_{L^2(\Omega)}+\| \h^{-\frac12} \jump{\nabla_h \cdot}\|_{L^2(\Gamma_h^0)} + \| \h^{-\frac32} \jump{\cdot}\|_{L^2(\Gamma_h^0)}$ is equivalent to the $|\cdot|_{H^2_h(\Omega)}$ semi-norm.
The definition \eqref{def:discrHess} of $H_h(v_h)$ and Lemma~\ref{L2bound-lifting}
(stability of lifting operators) readily imply
\begin{equation}\label{eqn:Eh<=H2}
\int_{\Omega}|H_h(v_h)|^2  +\gamma_1\sum_{e\in\Eh^0}\int_e\h^{-1}|\jump{\nabla_h v_h}|^2 +\gamma_0\sum_{e\in\Eh^0}\int_e\h^{-3}|\jump{v_h}|^2 \lesssim |v_h|_{H_h^2(\Omega)}^2.
\end{equation}
We now prove the converse and trickier inequality.
\begin{lemma}[discrete $H^2$ semi-norm equivalence]\label{coercivity:hessian}
For any stabilization parameters $\gamma_1,\gamma_0>0$ there exists a constant $C(\gamma_0,\gamma_1)>0$ such that for any $v_h\in \V_h^k$ and any polynomial degrees $l_1,l_2\ge 0$ there holds
\begin{equation}\label{eqn:ineq_h2}
C(\gamma_0,\gamma_1)|v_h|_{H_h^2(\Omega)}^2\le \int_{\Omega}|H_h(v_h)|^2  +\gamma_1\sum_{e\in\Eh^0}\int_e\h^{-1}|\jump{\nabla_h v_h}|^2 +\gamma_0\sum_{e\in\Eh^0}\int_e\h^{-3}|\jump{v_h}|^2.
\end{equation}
Moreover, the constant $C(\gamma_0,\gamma_1)$ tends to $0$ when $\gamma_0$ or $\gamma_1$ tends to $0$.
\end{lemma}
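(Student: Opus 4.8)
The plan is to establish the reverse inequality \eqref{eqn:ineq_h2} by controlling the broken Hessian $D_h^2 v_h$ in terms of $H_h(v_h)$ and the jump seminorms. The starting point is the definition \eqref{def:discrHess}, rewritten as
\begin{equation*}
D_h^2 v_h = H_h(v_h) + R_h(\jump{\nabla_h v_h}) - B_h(\jump{v_h}).
\end{equation*}
Taking $L^2(\Omega)$ norms and applying the triangle inequality together with the stability of the lifting operators (Lemma~\ref{L2bound-lifting}) gives
\begin{equation*}
\|D_h^2 v_h\|_{L^2(\Omega)} \le \|H_h(v_h)\|_{L^2(\Omega)} + C_1\|\h^{-\frac12}\jump{\nabla_h v_h}\|_{L^2(\Gh^0)} + C_2\|\h^{-\frac32}\jump{v_h}\|_{L^2(\Gh^0)}
\end{equation*}
for absolute constants $C_1, C_2$ depending only on shape regularity and $l_1, l_2$. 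Squaring, using $(a+b+c)^2 \le 3(a^2+b^2+c^2)$, and adding $\gamma_1\|\h^{-\frac12}\jump{\nabla_h v_h}\|_{L^2(\Gh^0)}^2 + \gamma_0\|\h^{-\frac32}\jump{v_h}\|_{L^2(\Gh^0)}^2$ to the right-hand side, one obtains an upper bound for $\|D_h^2 v_h\|_{L^2(\Omega)}^2$, and hence for $|v_h|_{H_h^2(\Omega)}^2$ (which is exactly $\|D_h^2 v_h\|_{L^2(\Omega)}^2$ plus the two jump seminorms), by a constant times the right-hand side of \eqref{eqn:ineq_h2}. Dividing through yields $C(\gamma_0,\gamma_1)$.

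The delicate point is the dependence of $C(\gamma_0,\gamma_1)$ on the stabilization parameters and the claim that it degenerates as $\gamma_0$ or $\gamma_1$ tends to $0$. The issue is that in the step above, the jump seminorms appear on the right-hand side of the bound for $\|D_h^2 v_h\|_{L^2(\Omega)}^2$ with fixed constants $3C_1^2, 3C_2^2$ coming from Lemma~\ref{L2bound-lifting}, not scaled by $\gamma_1, \gamma_0$. To fold everything into a multiple of the right-hand side of \eqref{eqn:ineq_h2}, one must bound, e.g., $3C_1^2\|\h^{-\frac12}\jump{\nabla_h v_h}\|_{L^2(\Gh^0)}^2$ by $\frac{3C_1^2}{\gamma_1}\cdot\gamma_1\|\h^{-\frac12}\jump{\nabla_h v_h}\|_{L^2(\Gh^0)}^2$, and similarly with $\gamma_0$. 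Tracking constants through, $|v_h|_{H_h^2(\Omega)}^2$ is bounded by
\begin{equation*}
3\max\Big\{1,\ \tfrac{3C_1^2}{\gamma_1}+1,\ \tfrac{3C_2^2}{\gamma_0}+1\Big\}\ \text{times the right-hand side of }\eqref{eqn:ineq_h2},
\end{equation*}
so one may take $C(\gamma_0,\gamma_1) = \big(3\max\{1, 3C_1^2/\gamma_1 + 1, 3C_2^2/\gamma_0 + 1\}\big)^{-1}$, which indeed tends to $0$ as $\gamma_0\to 0$ or $\gamma_1\to 0$. I would also double-check that the reverse inequality \eqref{eqn:Eh<=H2} stated just before the lemma — which gives the companion bound with a constant independent of $\gamma_0,\gamma_1$ — is consistent with this, confirming the seminorm equivalence is genuine for each fixed positive choice of parameters.

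I expect the main obstacle to be purely bookkeeping: making sure the constants $C_1, C_2$ from Lemma~\ref{L2bound-lifting} are genuinely independent of $h$, $v_h$, $\gamma_0$ and $\gamma_1$ (they depend only on shape regularity and the lifting degrees $l_1, l_2$), and then arranging the algebra so the final constant has the advertised monotone degeneration. No compactness, interpolation, or scaling argument beyond what is already recorded is needed here; the proof is a direct triangle-inequality estimate built on the lifting stability bound, which is why it is short.
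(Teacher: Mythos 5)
Your proposal is correct and follows essentially the same route as the paper: both arguments rest solely on the decomposition \eqref{def:discrHess} of $H_h(v_h)$ in terms of $D_h^2 v_h$ and the liftings, the stability bounds of Lemma~\ref{L2bound-lifting}, and absorption of the lifting contributions into the $\gamma_1,\gamma_0$-weighted jump terms; the paper trades constants via Young's inequality with a parameter $\alpha\in\big(1,1+\min(\gamma_0,\gamma_1)/C\big)$ applied to $\|H_h(v_h)\|_{L^2(\Omega)}^2$, while you apply the triangle inequality to $D_h^2 v_h = H_h(v_h)+R_h(\jump{\nabla_h v_h})-B_h(\jump{v_h})$ and rescale the jump terms by $1/\gamma_1$, $1/\gamma_0$, which is an equivalent bookkeeping yielding the same qualitative constant $C(\gamma_0,\gamma_1)\to 0$ as $\gamma_0$ or $\gamma_1\to 0$. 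The only blemish is your explicit final constant, which is slightly off in low-order terms; the clean choice from your own estimate is $C(\gamma_0,\gamma_1)=\big(\max\{3,(3C_1^2+1)/\gamma_1,(3C_2^2+1)/\gamma_0\}\big)^{-1}$.
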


\begin{proof}
We define
\begin{equation} \label{eqn:splitH}
I_1:= \int_{\Omega}|H_h(v_h)|^2, \qquad I_2:= \gamma_1\sum_{e\in\Eh^0}\int_e\h^{-1}|\jump{\nabla_hv_h}|^2 + \gamma_0\sum_{e\in\Eh^0}\int_e\h^{-3}|\jump{v_h}|^2,
\end{equation}
and prove a lower bound for $I_1$. The definition \eqref{def:discrHess} of the discrete Hessian yields
\begin{align*}
I_1 &= \|D_h^2v_h\|_{L^2(\Omega)}^2+\|B_h(\jump{v_h})-R_h(\jump{\nabla_hv_h})\|_{L^2(\Omega)}^2+2\int_{\Omega}D_h^2v_h:\big(B_h(\jump{v_h})-R_h(\jump{\nabla_h v_h})\big)\\
&\ge (1-\alpha^{-1})\|D_h^2v_h\|_{L^2(\Omega)}^2+(1-\alpha)\|B_h(\jump{v_h})-R_h(\jump{\nabla_hv_h})\|_{L^2(\Omega)}^2,
\end{align*}
where we used Young's inequality with $\alpha>1$. Note that Lemma~\ref{L2bound-lifting} (stability of the lifting operators) guarantees the existence of a constant $C$ independent of $h$ such that
\begin{equation*}
\|B_h(\jump{v_h})-R_h(\jump{\nabla_hv_h})\|_{L^2(\Omega)}^2\le C \|\h^{-\frac32}\jump{v_h}\|_{L^2(\Gh^0)}^2+C\|\h^{-\frac12}\jump{\nabla_hv_h}\|_{L^2(\Gh^0)}^2.
\end{equation*}
Returning to the estimate for $I_1$, we thus arrive at
\begin{align*}
I_1 \ge (1-\alpha^{-1})\|D_h^2v_h\|_{L^2(\Omega)}^2+(1-\alpha)C\|\h^{-\frac32}\jump{v_h}\|_{L^2(\Gh^0)}^2+(1-\alpha)C\|\h^{-\frac12}\jump{\nabla_hv_h}\|_{L^2(\Gh^0)}^2.
\end{align*}
Combining this with $I_2$, we deduce that
\begin{equation*}
I_1+I_2 \ge \min\Big\{1-\alpha^{-1},(1-\alpha)C+\gamma_0,(1-\alpha)C+\gamma_1\Big\}|v_h|_{H_h^2(\Omega)}.
\end{equation*}
Therefore, recalling that $\gamma_0,\gamma_1>0$, the assertion \eqref{eqn:ineq_h2} holds with
\begin{equation} \label{def:coercivity_cst}
C(\gamma_0,\gamma_1):=\min\left\{1-\alpha^{-1},(1-\alpha)C+\gamma_0,(1-\alpha)C+\gamma_1\right\}
\end{equation}
upon choosing $1 < \alpha < 1+\min(\gamma_0,\gamma_1)/C$.
\end{proof}

%%%%%%%%%%%%%%%%%%%%%%%%%%%%%%%%%%%%%%%%%%%%%%%%%%%%%%%%%%%%%%%%%%%%%%%%%%%
\section{Discrete energy and discrete admissible set} \label{sec:Eh_Dh} 
%%%%%%%%%%%%%%%%%%%%%%%%%%%%%%%%%%%%%%%%%%%%%%%%%%%%%%%%%%%%%%%%%%%%%%%%%%%

We now deal with the discrete energy $E_h(\vy_h)$ defined in
\eqref{e:Eh_intro}.
Compared to the exact energy \eqref{def:Eg_D2y}, the Hessians $D^2 y_k$ are replaced by the discrete Hessians $H_h(y_{h,k})$ and stabilization terms with parameters $\gamma_0,\gamma_1>0$ are included. The latter are motivated by the following coercivity result, which holds for any parameters $\gamma_0,\gamma_1>0$. Note that they are not required to be large enough unlike for the interior penalty method \cite{bonito2018}.

\begin{thm}[coercivity of $E_h$]\label{coercivity:plates}
Let $\vy_h\in [\V^k_h]^3$ and let $\gamma_0,\gamma_1>0$. There holds
\begin{equation} \label{eqn:coercivity_free}
|\vy_h|_{H_h^2(\Omega)}^2 \lesssim E_h(\vy_h).
\end{equation}
The hidden constant in the above estimate depends only on $\mu$, $g$, and the constant $C (\gamma_0,\gamma_1)$ that appears in \eqref{eqn:ineq_h2}, and tends to infinity as $\gamma_0$ or $\gamma_1 \to 0$.
\end{thm}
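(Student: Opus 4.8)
The plan is to bound the $|\cdot|_{H_h^2(\Omega)}$ semi-norm of each scalar component $y_{h,m}$ by the corresponding contributions in $E_h(\vy_h)$ and then sum over $m=1,2,3$. The key tool is Lemma~\ref{coercivity:hessian} (discrete $H^2$ semi-norm equivalence), which already gives
\[
C(\gamma_0,\gamma_1)\,|y_{h,m}|_{H_h^2(\Omega)}^2 \le \int_{\Omega}|H_h(y_{h,m})|^2 + \gamma_1\sum_{e\in\Eh^0}\int_e\h^{-1}|\jump{\nabla_h y_{h,m}}|^2 + \gamma_0\sum_{e\in\Eh^0}\int_e\h^{-3}|\jump{y_{h,m}}|^2.
\]
Thus it suffices to show that the right-hand side is controlled by $E_h(\vy_h)$, summed over $m$. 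The stabilization terms match exactly (up to the factors $\tfrac12$) the last two terms of \eqref{e:Eh_intro}, so the only real work is to absorb $\int_{\Omega}|H_h(y_{h,m})|^2$ into the first two (bending) terms of $E_h(\vy_h)$.

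The second step is therefore a pointwise algebraic estimate: for each $m$ and a.e.\ $\vx\in\Omega$, writing $M:=H_h(y_{h,m})(\vx)\in\mathbb{R}^{2\times2}$ and $S:=\g^{-1/2}(\vx)$, I claim
\[
|M|^2 = |S^{-1}(SMS)S^{-1}|^2 \le \|S^{-1}\|^4\,|SMS|^2 \lesssim |SMS|^2,
\]
where the last bound uses that $g\in[L^\infty(\Omega)]^{2\times2}$ is symmetric positive definite, so $g^{1/2}$ (hence $\|S^{-1}\|=\|g^{1/2}\|$) is bounded a.e.; here I use submultiplicativity of the Frobenius norm together with $\|AB\|\le\|A\|_{2}\|B\|$. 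Since $SMS$ is symmetric and the map $N\mapsto |N|^2 + \tfrac{\lambda}{2\mu+\lambda}\tr(N)^2$ is bounded below by $|N|^2$ (because $\tfrac{\lambda}{2\mu+\lambda}\ge0$ for physical Lamé parameters, or more generally $>-1$), one gets
\[
\int_\Omega |H_h(y_{h,m})|^2 \lesssim \int_\Omega |SMS|^2 \le \int_\Omega |SMS|^2 + \frac{\lambda}{2\mu+\lambda}\tr(SMS)^2,
\]
and the right-hand side, summed over $m$ and multiplied by $\mu/12$, is exactly the bending part of $E_h(\vy_h)$. Combining with the matched stabilization terms yields $|\vy_h|_{H_h^2(\Omega)}^2 \lesssim E_h(\vy_h)$, with hidden constant depending on $\mu$, on $\|g\|_{L^\infty}$ and $\|g^{-1}\|_{L^\infty}$, and on $1/C(\gamma_0,\gamma_1)$, hence blowing up as $\gamma_0$ or $\gamma_1\to0$.

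The only mildly delicate point — the main obstacle — is the sign of the trace term: if one does not want to assume $\lambda\ge0$ one must check that $t:=\tfrac{\lambda}{2\mu+\lambda}$ satisfies $t>-1$ (equivalently $\mu>0$ and $2\mu+\lambda>0$, the standard ellipticity conditions), so that $|N|^2 + t\,\tr(N)^2 \ge \min(1,1+2t)\,|N|^2 \gtrsim |N|^2$ for symmetric $N$; this uses $\tr(N)^2 \le 2|N|^2$. Everything else is bookkeeping: extract the factor $\mu/12$, use the uniform bounds on $g^{\pm1/2}$, apply Lemma~\ref{coercivity:hessian} componentwise, and sum the three components.
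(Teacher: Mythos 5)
Your argument is correct and follows essentially the same route as the paper: both proofs reduce the claim to Lemma~\ref{coercivity:hessian} after showing that the weighted bending part of $E_h$ controls $\|H_h(\vy_h)\|_{L^2(\Omega)}^2$ and the stabilization terms match up to the factors $\tfrac12$. The only real difference is how the weighted/unweighted comparison is justified: you argue pointwise, via submultiplicativity and the a.e.\ bound $|M|\le \|g^{1/2}\|^2\,|g^{-1/2}Mg^{-1/2}|$ coming from $g\in[L^\infty(\Omega)]^{2\times2}$ SPD, whereas the paper invokes equivalence of the two norms on the finite-dimensional range $R(H_h)$ and simply drops the nonnegative trace term. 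Your pointwise version is arguably the more transparent of the two, since it makes the $h$-independence and the precise dependence on $\|g\|_{L^\infty}$ explicit rather than appealing to finite dimensionality. One small inaccuracy in your optional aside: with the bound $\tr(N)^2\le 2|N|^2$ for symmetric $2\times2$ matrices, the estimate $|N|^2+t\,\tr(N)^2\ge\min(1,1+2t)|N|^2$ is useful only for $t>-\tfrac12$, not $t>-1$ as stated (indeed, splitting $N$ into its deviatoric and spherical parts shows $t>-\tfrac12$ is the sharp positivity threshold in 2d); this does not affect the theorem, since for the Lam\'e parameters used here the coefficient $\tfrac{\lambda}{2\mu+\lambda}$ is nonnegative and the trace term can simply be discarded, exactly as in the paper.
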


\begin{proof}
Let $R(H_h)$ denote the range of $H_h:[\V^k_h]^3 \rightarrow [L^2(\Omega)]^{3\times 2\times 2}$. Because the metric $g(\vx)$ is SPD for a.e. $\vx \in \Omega$, the quantity $\big(\int_{\Omega}|g^{-\frac{1}{2}}\cdot g^{-\frac{1}{2}}|^2\big)^{\frac{1}{2}}:R(H_h)\to\mathbb{R}$ is a norm in the finite dimensional space $R(H_h)$ and is thus equivalent to $\| \, |\cdot | \,\|_{L^2(\Omega)}$, where $|\cdot|$ is the Frobenius norm.
Hence, there exists a constant $C>0$ depending only on $g$ such that 
\begin{equation*}
C \frac{\mu}{12} \|H_h(\vy_h)\|_{L^2(\Omega)}^2 +\frac{\gamma_1}{2}\|\h^{-\frac{1}{2}}\jump{\nabla_h\vy_h}\|_{L^2(\Gh^0)}^2+\frac{\gamma_0}{2}\|\h^{-\frac{3}{2}}\jump{\vy_h}\|_{L^2(\Gh^0)}^2 \leq E_h(\vy_h).
\end{equation*}
Lemma \ref{coercivity:hessian} (discrete $H^2$ semi-norm equivalence) implies
the desired estimate.
\end{proof}

We now discuss the approximation $\A_{h,\veps}^k$ of the admissible set $\A$ defined in \eqref{e:Ah_intro}.
The pointwise metric constraint $\nabla\vy^T\nabla\vy=g$ is too strong to be imposed on a polynomial space.
This leads to the definitions \eqref{def:PD_intro} and \eqref{e:Ah_intro} of the metric defect $D_h(\vy_h)$ and the discrete admissible set $\A_{h,\veps}^k$, namely
\begin{equation*} \label{def:PD}
  D_h(\vy_h) = \sum_{\K\in\Th}\left|\int_{\K} \nabla\vy_h^T\nabla\vy_h-g \,\right|,
  \qquad
  \A_{h,\veps}^k = \Big\{\vy_h\in [\V^k_h]^3\ :\  D_h(\vy_h)\leq \veps \Big\},
\end{equation*}
for a positive number $\veps$. 
The discrete counterpart of \eqref{prob:min_Eg} finally reads
$
\min_{\vy_h\in\A_{h,\veps}^k} E_h(\vy_h).
$

Recall that by assumption, $g$ is immersible and so $\A\neq\emptyset$. The following lemma guarantees that $\A_{h,\veps}^k$ is not empty provided that $\veps$ is sufficiently large.
\begin{lemma}[$\A_{h,\veps}^k$ is non-empty] \label{lemma:Ah_not_empty}
	Let $\vy\in\A$ and let $\vy_h:=\mathcal{I}_h^k\vy\in [\V^k_h]^3\cap[H^1(\Omega)]^3$ be the Lagrange interpolant of $\vy$. Then there exists a constant $C>0$ depending only on the shape regularity of $\{\Th\}_{h>0}$ and $\Omega$ such that 
	\begin{equation*} 
	D_h(\vy_h)\le Ch\|\vy\|_{H^2(\Omega)}^2.
	\end{equation*}
	In particular, $\vy_h \in \A_{h,\veps}^k$ provided $\veps\ge Ch\|\vy\|_{H^2(\Omega)}^2$.
\end{lemma}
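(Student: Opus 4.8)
The plan is to estimate the prestrain defect $D_h(\vy_h)$ element by element, exploiting the fact that $\vy\in\A$ satisfies $\nabla\vy^T\nabla\vy = g$ exactly a.e. in $\Omega$, so that on each $\K\in\Th$ we may write
\begin{equation*}
\int_{\K}\nabla\vy_h^T\nabla\vy_h - g = \int_{\K}\left(\nabla\vy_h^T\nabla\vy_h - \nabla\vy^T\nabla\vy\right).
\end{equation*}
First I would rewrite the integrand as a telescoping difference, $\nabla\vy_h^T\nabla\vy_h - \nabla\vy^T\nabla\vy = \nabla(\vy_h-\vy)^T\nabla\vy_h + \nabla\vy^T\nabla(\vy_h-\vy)$, and apply the Cauchy–Schwarz inequality on each $\K$ to bound the local contribution by $\|\nabla(\vy_h-\vy)\|_{L^2(\K)}\big(\|\nabla\vy_h\|_{L^2(\K)} + \|\nabla\vy\|_{L^2(\K)}\big)$. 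Summing over $\K\in\Th$ and using Cauchy–Schwarz in the sum then gives
\begin{equation*}
D_h(\vy_h) \lesssim \|\nabla_h(\vy_h-\vy)\|_{L^2(\Omega)}\left(\|\nabla_h\vy_h\|_{L^2(\Omega)} + \|\nabla\vy\|_{L^2(\Omega)}\right).
\end{equation*}

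Next I would invoke the standard interpolation estimate for the Lagrange interpolant $\mathcal{I}_h^k$ on a shape-regular mesh: since $\vy\in[H^2(\Omega)]^3$ and $k\ge 2$, one has $\|\nabla_h(\vy-\mathcal{I}_h^k\vy)\|_{L^2(\Omega)}\lesssim h\,\|\vy\|_{H^2(\Omega)}$, with constant depending only on the shape regularity of $\{\Th\}_{h>0}$ (this follows from the scaling relations \eqref{rel:shape_regular} and the Bramble–Hilbert lemma on the reference element; note $\mathcal{I}_h^k\vy\in[H^1(\Omega)]^3$ because the interpolant is continuous). The $H^1$-stability of the same interpolant gives $\|\nabla_h\vy_h\|_{L^2(\Omega)}\lesssim\|\vy\|_{H^1(\Omega)}\le\|\vy\|_{H^2(\Omega)}$, and trivially $\|\nabla\vy\|_{L^2(\Omega)}\le\|\vy\|_{H^2(\Omega)}$. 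Combining these three bounds with the displayed estimate for $D_h(\vy_h)$ yields $D_h(\vy_h)\lesssim h\,\|\vy\|_{H^2(\Omega)}^2$, which is the claimed inequality; the final sentence about non-emptiness of $\A_{h,\veps}^k$ is then immediate from the definition \eqref{e:Ah_intro}.

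I do not expect a serious obstacle here — the argument is a routine interpolation-error plus product-rule computation. The only mild subtlety is ensuring the constant depends solely on shape regularity and $\Omega$ and not on $h$: this is where one must be careful to use the mesh-function equivalence $h_\K\lesssim\restriction{\h}{\K}\lesssim h_\K$ and the uniform bounds \eqref{rel:shape_regular} on the affine (or bi-affine) element maps, rather than any element-specific constant. For quadrilateral meshes one additionally uses that the bisecting subtriangles are shape-regular, so the Bramble–Hilbert estimate transfers; this is already built into the mesh assumptions of Subsection~\ref{subsec:mesh}, so no extra work is needed.
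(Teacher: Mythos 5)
Your proposal is correct and follows essentially the same route as the paper: both start from $\nabla\vy^T\nabla\vy=g$, use the same product-rule telescoping for $\nabla\vy_h^T\nabla\vy_h-\nabla\vy^T\nabla\vy$, and close with the $O(h)$ Lagrange interpolation error in $H^1$. One small imprecision: the Lagrange interpolant $\mathcal{I}_h^k$ is not $H^1$-stable in the sense $\|\nabla_h\mathcal{I}_h^k\vy\|_{L^2(\Omega)}\lesssim\|\vy\|_{H^1(\Omega)}$ (nodal interpolation requires pointwise values, which $H^1$ functions in $\mathbb{R}^2$ do not have); the correct statement, and the one you actually need, is $\|\nabla_h\mathcal{I}_h^k\vy\|_{L^2(\Omega)}\le\|\nabla\vy\|_{L^2(\Omega)}+\|\nabla_h(\mathcal{I}_h^k\vy-\vy)\|_{L^2(\Omega)}\lesssim\|\vy\|_{H^2(\Omega)}$. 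The paper sidesteps this by doing one extra algebraic substitution $\nabla_h\vy_h=\nabla\vy+\nabla_h(\vy_h-\vy)$ so that every factor is controlled directly by $\|\nabla\vy\|_{L^2}$ or the interpolation error, avoiding any appeal to stability of $\mathcal{I}_h^k$.
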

\begin{proof}
	We proceed as in Step 2 of Proposition 5.3 in \cite{bonito2018} and compute
	\begin{equation}\label{e:add_sub_contraint}
	\left(\nabla_h\vy_h^T\nabla_h\vy_h-g\right) - \left(\nabla\vy^T\nabla\vy-g\right)=\nabla_{h}(\vy_h-\vy)^T\nabla_{h}\vy_h+\nabla\vy^T\nabla_h(\vy_h-\vy).
	\end{equation}
	Because $\vy \in \A$, further algebraic manipulation yields
	$$
	\nabla_h\vy_h^T\nabla_h\vy_h-g =\nabla_{h}(\vy_h-\vy)^T\nabla\vy+\nabla\vy^T\nabla_h(\vy_h-\vy)+ \nabla_{h}(\vy_h-\vy)^T\nabla_{h}(\vy_h-\vy)
	$$
        whence, thanks to the interpolation estimate
	$$
	\| \nabla_h (\vy - \vy_h) \|_{L^2(\Omega)} \lesssim h | \vy|_{H^2(\Omega)},
	$$
	we obtain
	\begin{equation*}
	D_h(\vy_h)\leq \| \nabla_h\vy_h^T\nabla_h\vy_h-g \|_{L^1(\Omega)} \lesssim \left( \| \nabla\vy \|_{L^2(\Omega)} + h|\vy|_{H^2(\Omega)}  \right) h|\vy|_{H^2(\Omega)} \lesssim  h\|\vy\|^2_{H^2(\Omega)},
	\end{equation*}
        which is the desired estimate.
	\end{proof}

Lemma~\ref{l:compactness} (compactness) requires sequences uniformly bounded in $|\cdot|_{H^2_h(\Omega)}$ and in the $H^1(\Omega)$ broken semi-norm. The former stems from Theorem~\ref{coercivity:plates} (coercivity of $E_h$) for sequences with bounded energies. For the latter, we resort to the constraint encoded in the discrete admissible set $\A^k_{h,\veps}$.
This is the object of the next lemma.

\begin{lemma}[gradient estimate]\label{bound-nabla}
We have 
\begin{equation}\label{e:gradient_estim}
\|\nabla_h\vy_h\|_{L^2(\Omega)}^2\leq  \sqrt{2}(\veps+\|g\|_{L^1(\Omega)}) \qquad  \forall\, \vy_h\in\A^k_{h,\veps}.
\end{equation}
\end{lemma}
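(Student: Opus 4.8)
The plan is to extract the bound on $\|\nabla_h\vy_h\|_{L^2(\Omega)}^2$ directly from the defining inequality $D_h(\vy_h)\le\veps$ of the discrete admissible set, using only the triangle inequality and elementary estimates. First I would observe that, by the definition \eqref{def:PD_intro} of the metric defect together with $D_h(\vy_h)\le\veps$, one has element-wise control:
\begin{equation*}
\sum_{\K\in\Th}\left|\int_{\K}\left(\nabla_h\vy_h^T\nabla_h\vy_h-g\right)\right|\le\veps,
\end{equation*}
and therefore, writing $\nabla_h\vy_h^T\nabla_h\vy_h=\left(\nabla_h\vy_h^T\nabla_h\vy_h-g\right)+g$ and summing over $\K$,
\begin{equation*}
\sum_{\K\in\Th}\left|\int_{\K}\nabla_h\vy_h^T\nabla_h\vy_h\right|\le\veps+\sum_{\K\in\Th}\int_{\K}|g|=\veps+\|g\|_{L^1(\Omega)}.
\end{equation*}

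Next I would relate $\|\nabla_h\vy_h\|_{L^2(\Omega)}^2$ to this quantity. The key point is that $\nabla_h\vy_h^T\nabla_h\vy_h$ is a symmetric positive semidefinite matrix on each element, whose trace equals $|\nabla_h\vy_h|^2$ (the squared Frobenius norm of the $3\times2$ matrix $\nabla_h\vy_h$). Hence $\int_{\K}|\nabla_h\vy_h|^2=\int_{\K}\tr\left(\nabla_h\vy_h^T\nabla_h\vy_h\right)$, and since the integrand is a nonnegative scalar, $\int_{\K}|\nabla_h\vy_h|^2=\left|\int_{\K}\tr\left(\nabla_h\vy_h^T\nabla_h\vy_h\right)\right|\le\sqrt{2}\left|\int_{\K}\nabla_h\vy_h^T\nabla_h\vy_h\right|$, where the factor $\sqrt2$ comes from comparing the trace of a $2\times2$ symmetric PSD matrix with its Frobenius norm (indeed for such a matrix $M$, $\tr M=\lambda_1+\lambda_2\le\sqrt2\sqrt{\lambda_1^2+\lambda_2^2}=\sqrt2|M|$). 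Summing over $\K\in\Th$ and combining with the previous display yields
\begin{equation*}
\|\nabla_h\vy_h\|_{L^2(\Omega)}^2=\sum_{\K\in\Th}\int_{\K}|\nabla_h\vy_h|^2\le\sqrt2\sum_{\K\in\Th}\left|\int_{\K}\nabla_h\vy_h^T\nabla_h\vy_h\right|\le\sqrt2\left(\veps+\|g\|_{L^1(\Omega)}\right),
\end{equation*}
which is exactly \eqref{e:gradient_estim}.

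There is essentially no serious obstacle here; the proof is a short computation. The one subtlety worth getting right is the constant $\sqrt2$: it is crucial that the integrand $\tr(\nabla_h\vy_h^T\nabla_h\vy_h)$ has a sign, so that the $L^1$-norm of the trace over $\K$ equals the absolute value of its integral and can be bounded by the Frobenius norm of the matrix-valued integral $\int_\K\nabla_h\vy_h^T\nabla_h\vy_h$ via the pointwise inequality $|\tr M|\le\sqrt2\,|M|$ for symmetric $2\times2$ matrices. If one were careless and bounded $\int_\K|\nabla_h\vy_h|^2$ by $\int_\K|\nabla_h\vy_h^T\nabla_h\vy_h-g|+\int_\K|g|$ term by term one would pick up $\int_\K|g|$ rather than $\|g\|_{L^1(\Omega)}$ with the Frobenius norm, which is consistent, but the clean statement with the $\sqrt2$ factor and $\|g\|_{L^1(\Omega)}$ requires the trace trick above. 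No compactness, interpolation, or lifting estimates are needed for this lemma — it is purely algebraic plus the admissibility constraint.
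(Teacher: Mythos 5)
Your proof is correct and follows essentially the same route as the paper: the paper's chain of inequalities $\tfrac12\big(\int_\K|\nabla\vy_h|^2\big)^2\le\sum_{i,j}\big(\int_\K\partial_i\vy_h\cdot\partial_j\vy_h\big)^2=\big|\int_\K\nabla\vy_h^T\nabla\vy_h\big|^2$ is precisely your trace-versus-Frobenius bound $\tr M\le\sqrt2\,|M|$ applied to $M=\int_\K\nabla\vy_h^T\nabla\vy_h$, written out in coordinates, and the subsequent per-element triangle inequality with $D_h(\vy_h)\le\veps$ and $\big|\int_\K g\big|\le\int_\K|g|$ is identical.
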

\begin{proof}
It suffices to note that for any $T\in\Th$
\begin{align}\label{eqn:transpose-estimate}
\frac{1}{2}\left(\int_{\K}|\nabla\vy_h|^2\right)^2 \leq \sum_{i=1}^2\left(\int_{\K}|\partial_i\vy_h|^2\right)^2  \leq \sum_{i,j=1}^2\left(\int_{\K}\partial_i\vy_h\cdot\partial_j\vy_h\right)^2= \left|\int_{\K}\nabla\vy_h^T\nabla\vy_h\right|^2,
\end{align}
whence, taking advantage of the discrete constraint in $\A^k_{h,\veps}$, we have
\begin{equation*}
2^{-\frac12}\|\nabla_h\vy_h\|_{L^2(\Omega)}^2 \leq  \sum_{T\in\Th}\left|\int_{\K}\nabla\vy_h^T\nabla\vy_h\right|
\le \sum_{T\in\Th}\left|\int_{\K}\nabla\vy_h^T\nabla\vy_h-g \, \right|+\sum_{T\in\Th}\left|\int_{\K}g \, \right|\le\veps+\|g\|_{L^1(\Omega)}.
\end{equation*}
This ends the proof.
\end{proof}

In view of Lemma~\ref{lemma:Ah_not_empty} ($\A_{h,\veps}^k$ is non-empty),
the existence of a solution to the minimization problem \eqref{prob:min_Eg_h_intro} follows from standard arguments. This is the object of the next proposition. Note that for this result, $h$ and $\veps$ are fixed.

\begin{prop}[existence of discrete solutions] \label{prop:existence_yh}
	Let $h>0$ and $\veps>0$ be such that $\A_{h,\veps}^k \not = \emptyset$. Then there exists at least one solution to the minimization problem \eqref{prob:min_Eg_h_intro}.
\end{prop}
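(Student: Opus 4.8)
The plan is to use the direct method of the calculus of variations, which applies cleanly here because $\A_{h,\veps}^k$, although nonconvex, lives inside a finite-dimensional space for fixed $h$. First I would take a minimizing sequence $\{\vy_h^{(n)}\}_{n\ge 1}\subset \A_{h,\veps}^k$ for $E_h$, which exists and has bounded energy since $\A_{h,\veps}^k\neq\emptyset$ by hypothesis; say $E_h(\vy_h^{(n)})\to \inf_{\vy_h\in\A_{h,\veps}^k}E_h(\vy_h)=:E_\star$, and $E_h(\vy_h^{(n)})\le E_\star+1$ for $n$ large.

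Next I would establish a uniform bound on $\{\vy_h^{(n)}\}$ in the norm $\|\cdot\|_{H_h^2(\Omega)}$ defined in \eqref{e:H2metric}. Theorem~\ref{coercivity:plates} (coercivity of $E_h$) gives $|\vy_h^{(n)}|_{H_h^2(\Omega)}^2\lesssim E_h(\vy_h^{(n)})\le E_\star+1$, which controls the seminorm. For the missing $L^2(\Omega)$ part, I would decompose $\vy_h^{(n)}=\wb{\vy}_h^{(n)}+\strokedint_\Omega\vy_h^{(n)}$, use the Poincar\'e--Friedrichs inequality \eqref{eqn:fp-smoothing} together with the gradient estimate \eqref{e:gradient_estim} from Lemma~\ref{bound-nabla} (valid since $\vy_h^{(n)}\in\A_{h,\veps}^k$) to bound $\|\wb{\vy}_h^{(n)}\|_{L^2(\Omega)}$, and then handle the constant $\strokedint_\Omega\vy_h^{(n)}$. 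Here is a subtlety in the \emph{free boundary} setting: a priori the average $\strokedint_\Omega\vy_h^{(n)}$ need not be bounded, since $E_h$ and $D_h$ are both invariant under adding constants. I would therefore either observe that, for fixed $h$, one may quotient out constants and work with the representative having zero average (so $\strokedint_\Omega\vy_h^{(n)}=0$), or equivalently replace the minimizing sequence by $\wb{\vy}_h^{(n)}$, which still lies in $\A_{h,\veps}^k$ and has the same energy. With this normalization, $\{\vy_h^{(n)}\}$ is bounded in the finite-dimensional space $[\V_h^k]^3$.

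Then I would extract a subsequence (not relabeled) converging to some $\vy_h^\star\in[\V_h^k]^3$; in finite dimensions all norms are equivalent, so this convergence is in every relevant topology, in particular $\vy_h^{(n)}\to\vy_h^\star$ in $[H^1(\Omega)]^3$ and hence $\nabla_h\vy_h^{(n)}\to\nabla_h\vy_h^\star$ in $[L^2(\Omega)]^{3\times 2}$, and also $H_h(\vy_h^{(n)})\to H_h(\vy_h^\star)$ and all jump terms converge. The constraint passes to the limit: each term $\int_T\nabla_h(\vy_h^{(n)})^T\nabla_h\vy_h^{(n)}-g$ depends continuously on $\nabla_h\vy_h^{(n)}|_T$ (it is quadratic), so $D_h(\vy_h^{(n)})\to D_h(\vy_h^\star)$ and therefore $D_h(\vy_h^\star)\le\veps$, i.e. $\vy_h^\star\in\A_{h,\veps}^k$. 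Likewise $E_h$ is continuous with respect to this convergence (it is a sum of squared $L^2$ norms of quantities depending continuously on $\vy_h^{(n)}$), so $E_h(\vy_h^\star)=\lim_n E_h(\vy_h^{(n)})=E_\star$. Hence $\vy_h^\star$ is a minimizer.

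I do not expect a genuinely hard obstacle here — the argument is routine compactness in finite dimensions. The one point requiring care, and the only place where the \emph{free boundary} character of the problem intervenes, is the coercivity/boundedness of the $L^2$-part of the minimizing sequence: without Dirichlet conditions $E_h$ does not control $\|\vy_h\|_{L^2(\Omega)}$, so one must factor out rigid translations (indeed the whole group of rigid motions, though for this existence statement translations suffice) before invoking compactness. This is precisely the reason the paper later adds an $L^2$ term to the gradient-flow metric, and it is worth flagging explicitly in the proof.
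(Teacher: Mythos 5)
Your proposal is correct and follows essentially the same route as the paper's own proof: normalize the minimizing sequence to zero mean using the invariance of $E_h$ and $D_h$ under constant shifts, bound it via Theorem~\ref{coercivity:plates}, Lemma~\ref{bound-nabla} and the discrete Poincar\'e--Friedrichs inequality \eqref{eqn:fp-smoothing}, then conclude by finite-dimensional compactness and continuity of $E_h$ and $D_h$. The free-boundary subtlety you flag (factoring out translations) is precisely the point the paper handles the same way, so there is nothing to add.
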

\begin{proof}
	Let $0\le m:=\inf_{\vy_h\in\A_{h,\veps}^k}E_h(\vy_h) < \infty$ and 
	$\{\vy_h^n\}_{n\ge 1}\subset \A_{h,\veps}^k$ be a minimizing sequence
	\begin{equation} \label{eqn:min_sequ}
	\lim_{n\rightarrow\infty}E_h(\vy_h^n) = m.
	\end{equation}
	Because $E_h(\vy_h^n+\vc)=E_h(\vy_h^n)$ and $D_h(\vy_h^n+\vc)=D_h(\vy_h^n)$ for any constant vector $\vc\in\mathbb{R}^3$, we can assume without loss of generality that $\int_\Omega \vy_h^n = 0$.
        Combining estimate \eqref{eqn:fp-smoothing} of Lemma~\ref{estimate-smoothing-interpolation} (discrete Poincar\'e-Friedrichs inequalities), Lemma~\ref{bound-nabla} (gradient estimate) and Theorem~\ref{coercivity:plates} (coercivity of $E_h$), we deduce that $\| \vy_h^n \|_{L^2(\Omega)} \lesssim 1$.
	Because $[\V_h^k]^3$ is finite dimensional, we have that (up to a subsequence)  $\{ \vy_h^n \}_{n\geq 1}$ converges strongly in $[L^2(\Omega)]^3$ to some $\vy_h^\infty \in [\V_h^k]^3$, and so in any norm. In turn, the continuity of the quadratic energy $E_h$ and the prestrain defect $D_h$ guarantee that  
	$$
	E_h(\vy_h^{\infty})= \lim_{n\rightarrow\infty}E_h(\vy_h^n)=\inf_{\vy_h\in\A_{h,\veps}^k}E_h(\vy_h)
	$$
	and $\vy_h^\infty \in \A_{h,\veps}^k$. This proves that $\vy_h^\infty$ is a solution to the minimization problem \eqref{prob:min_Eg_h_intro}.
	\end{proof}
 
%%%%%%%%%%%%%%%%%%%%%%%%%%%%%%%%%%%%%%%%%%%%%%%%%%%%%%%%%%%%%%%%%%%%%%%%%%%
\section{$\Gamma$-convergence of $E_h$ and convergence of global minimizers} \label{sec:Gamma}
%%%%%%%%%%%%%%%%%%%%%%%%%%%%%%%%%%%%%%%%%%%%%%%%%%%%%%%%%%%%%%%%%%%%%%%%%%%

The convergence of discrete global minimizers of problem \eqref{prob:min_Eg_h_intro} towards a global minimizer of \eqref{prob:min_Eg} follows from the $\Gamma$-convergence of $E_h$ towards $E$ as $h\rightarrow 0$. 
The latter hinges on the so-called lim-inf and lim-sup properties and this section is devoted to the proof of these two properties. However, we start by stating the convergence of discrete global minimizers $\vy_h$ satisfying $E_h(\vy_h) \leq \Lambda$ for a constant $\Lambda$ independent of $h$; see Theorem \ref{conv_global_min} below. For the sake of brevity, the proof is omitted as it closely follows the one of \cite[Theorem 5.1]{bonito2018}, where the deformations of single layer plates subject to an isometry constraint and Dirichlet boundary conditions are considered.  
We require that the prestrain parameter $\veps$ satisfies
\begin{equation}\label{e:pres_param_cond}
\veps \geq Ch \big(\| g\|_{L^1(\Omega)} + \Lambda \big),
\end{equation}
where $C=C(g)$ is a constant only depending on the hidden constant in \eqref{eqn:fp-smoothing} and that in
$$
| \vw |_{H^2(\Omega)}^2 \lesssim E(\vw)  \qquad \forall\, \vw \in \A.
$$
Note that the condition \eqref{e:pres_param_cond} on $\veps$ differs from the one that appears in \cite[Theorem 5.1]{bonito2018}, which involves the boundary data but not the metric $g$.

\begin{thm}[convergence of global minimizers]\label{conv_global_min}
Let $\{\vy_h\}_{h>0} \subset\A_{h,\veps}^k$ be a sequence of functions such that $E_h(\vy_h)\leq \Lambda$ for a constant $\Lambda$ independent of $h$ and let the prestrain defect parameter $\veps$ satisfy \eqref{e:pres_param_cond}.
If $\vy_h \in \A_{h,\veps}^k$ is an almost global minimizer of $E_h$ in the sense that 
$$
E_h(\vy_h) \leq \inf_{\vw_h \in \A_{h,\veps}^k} E_h(\vw_h) + \sigma,
$$
where $\sigma,\veps \to 0$ as $h\to 0$, then $\{ \wb \vy_h  \}_{h>0}$ with $\wb \vy_h:= \vy_h- \strokedint_{\Omega} \vy_h$ is precompact in $[L^2(\Omega)]^3$  and every cluster point $\wb \vy$ of $\wb \vy_h$ belongs to $\A$
and is a global minimizer of $E$, namely 
$
	E(\bar \vy) = \inf_{\vw \in \A}E(\vw).
$
Moreover, up to a subsequence (not relabeled), the energies converge
$$
	\lim_{h  \rightarrow 0}E_h(\vy_h)=E(\bar \vy).
$$
\end{thm}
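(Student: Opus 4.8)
The plan is to establish $\Gamma$-convergence of $E_h$ to $E$ via the two standard properties, then combine them with compactness to conclude. The \textbf{lim-inf property} states: if $\wb\vy_h \to \wb\vy$ in $[L^2(\Omega)]^3$ with $\wb\vy_h := \vy_h - \strokedint_\Omega\vy_h$ and $\vy_h \in \A_{h,\veps}^k$, then $\wb\vy \in \A$ and $E(\wb\vy) \leq \liminf_{h\to0} E_h(\vy_h)$. To prove it, I would assume without loss of generality that $\liminf E_h(\vy_h)$ is finite; then Theorem~\ref{coercivity:plates} gives a uniform bound $|\vy_h|_{H^2_h(\Omega)}^2 \lesssim E_h(\vy_h) \lesssim 1$, and Lemma~\ref{bound-nabla} gives a uniform bound on $\|\nabla_h\vy_h\|_{L^2(\Omega)}$ from the constraint $D_h(\vy_h) \leq \veps$. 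Lemma~\ref{l:compactness} then upgrades the $L^2$ convergence to $\nabla_h\wb\vy_h \to \nabla\wb\vy$ in $[L^2(\Omega)]^3$ with $\wb\vy \in H^2(\Omega)$. To check $\wb\vy \in \A$, I would pass to the limit in the element-wise constraint: for each $T\in\Th$, $|\int_T \nabla_h\vy_h^T\nabla_h\vy_h - g| \leq D_h(\vy_h) \leq \veps \to 0$, and together with the strong $L^2$ convergence of $\nabla_h\wb\vy_h$ this forces $\nabla\wb\vy^T\nabla\wb\vy = g$ a.e. (testing against piecewise constants and letting $h\to0$). Finally, for the energy inequality, I would use Lemma~\ref{weak-conv}: $H_h(y_{h,m}) \rightharpoonup D^2\wb y_m$ weakly in $[L^2(\Omega)]^{2\times2}$ (after subtracting the mean, which doesn't affect $H_h$), so by weak lower semicontinuity of the convex integrand $\int_\Omega |g^{-1/2}(\cdot)g^{-1/2}|^2 + \frac{\lambda}{2\mu+\lambda}\tr(g^{-1/2}(\cdot)g^{-1/2})^2$ (a positive-definite quadratic form composed with a bounded linear map, using that $g^{-1/2} \in L^\infty$), we get $E(\wb\vy) \leq \liminf E_h(\vy_h)$; the nonnegative stabilization terms in $E_h$ are simply dropped.

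For the \textbf{lim-sup property}: given $\vy \in \A$, I must construct a recovery sequence $\vy_h \in \A_{h,\veps}^k$ with $\wb\vy_h \to \wb\vy$ in $[L^2(\Omega)]^3$ and $\limsup_{h\to0} E_h(\vy_h) \leq E(\vy)$. The natural choice is the Lagrange interpolant $\vy_h := \mathcal{I}_h^k\vy \in [\V_h^k \cap H^1(\Omega)]^3$. Since $\vy \in [H^2(\Omega)]^3$, standard interpolation estimates give $\vy_h \to \vy$ in $[H^1(\Omega)]^3$, hence in $[L^2(\Omega)]^3$, so $\wb\vy_h \to \wb\vy$. Admissibility follows from Lemma~\ref{lemma:Ah_not_empty}: $D_h(\vy_h) \leq Ch\|\vy\|_{H^2(\Omega)}^2 \leq \veps$ provided $\veps$ satisfies the stated hypothesis (here I note $E(\vy) = E(\wb\vy) + $ bending, and the condition \eqref{e:pres_param_cond} with $\Lambda$ replaced by $E(\vy)$ or its supremum over the relevant class covers this — actually for the recovery sequence one only needs $\veps \geq Ch\|\vy\|_{H^2(\Omega)}^2$, which holds for $h$ small since $\veps/h \to \infty$ or $\veps$ is chosen appropriately). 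For the energy, Lemma~\ref{strong-conv} gives $H_h(y_{h,m}) \to D^2 y_m$ strongly in $[L^2(\Omega)]^{2\times2}$, and the jump terms vanish: $\|\h^{-1/2}\jump{\nabla_h\vy_h}\|_{L^2(\Gh^0)} \to 0$ and $\|\h^{-3/2}\jump{\vy_h}\|_{L^2(\Gh^0)} \to 0$ (also from Lemma~\ref{strong-conv}, since these control $R_h(\jump{\nabla_h\vy_h})$ and $B_h(\jump{\vy_h})$ which go to zero — or more directly from $\vy_h \in H^1$ and interpolation trace estimates). Strong $L^2$ convergence of the (bounded linear image of the) Hessians and continuity of the quadratic integrand then give $E_h(\vy_h) \to E(\vy)$, in particular $\limsup E_h(\vy_h) = E(\vy)$.

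With both properties in hand, the theorem follows by the standard $\Gamma$-convergence argument for minimizers. Given the almost-minimizing sequence $\{\vy_h\}$ with $E_h(\vy_h) \leq \Lambda$, the coercivity bound and Lemma~\ref{bound-nabla} together with the Poincaré-Friedrichs inequality \eqref{eqn:fp-smoothing} show $\|\wb\vy_h\|_{L^2(\Omega)} \lesssim 1$, so by Lemma~\ref{l:compactness} the sequence $\{\wb\vy_h\}$ is precompact in $[L^2(\Omega)]^3$; let $\wb\vy$ be any cluster point, realized along a subsequence. The lim-inf property gives $\wb\vy \in \A$ and $E(\wb\vy) \leq \liminf E_h(\vy_h)$. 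For the reverse, pick any competitor $\vw \in \A$, take its recovery sequence $\vw_h \in \A_{h,\veps}^k$ from the lim-sup property (using that $\veps$ satisfies \eqref{e:pres_param_cond} with the same $\Lambda$, so $\vw_h$ is admissible for $h$ small), and use the almost-minimality: $E_h(\vy_h) \leq E_h(\vw_h) + \sigma$. Passing to the limit, $\limsup E_h(\vy_h) \leq \limsup E_h(\vw_h) + \limsup\sigma = E(\vw)$. Chaining the inequalities, $E(\wb\vy) \leq \liminf E_h(\vy_h) \leq \limsup E_h(\vy_h) \leq E(\vw)$ for every $\vw \in \A$, so $\wb\vy$ is a global minimizer of $E$ and, taking $\vw = \wb\vy$, all inequalities are equalities, giving $\lim_{h\to0} E_h(\vy_h) = E(\wb\vy)$ along the subsequence. \textbf{The main obstacle} I anticipate is the careful handling of the free-boundary setting — the energy and constraint are invariant under the additive action of $\mathbb{R}^3$, so one must consistently work with mean-zero representatives $\wb\vy_h$ and verify that $H_h$ (hence the bending energy) is unaffected by subtracting constants, and that the compactness in Lemma~\ref{l:compactness} genuinely delivers $H^2$ regularity and gradient convergence rather than merely $L^2$ convergence; also ensuring the condition \eqref{e:pres_param_cond} is simultaneously compatible with admissibility of the recovery sequences for arbitrary competitors, which is why $\Lambda$ (not just $E(\vw)$) enters the hypothesis.
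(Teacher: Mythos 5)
Your overall route is the same as the paper's: re-derive the lim-inf and lim-sup properties (these are exactly Theorems~\ref{lim-inf} and~\ref{lim-sup}, proved with the same ingredients you list: Theorem~\ref{coercivity:plates}, Lemmas~\ref{bound-nabla}, \ref{l:compactness}, \ref{weak-conv}, \ref{strong-conv}, \ref{lemma:Ah_not_empty}) and then run the standard $\Gamma$-convergence chaining for almost minimizers. Your only genuine deviation is in the admissibility step of the lim-inf part, where you pass to the limit in the elementwise constraint by testing against piecewise constants instead of the elementwise Poincar\'e bound \eqref{l1cons}; both arguments work, and yours is a legitimate alternative.

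There is, however, one step that fails as written. In the chaining argument you ``pick any competitor $\vw\in\A$'' and assert that its recovery sequence $\vw_h=\mathcal I_h^k\vw$ lies in $\A_{h,\veps}^k$ because $\veps$ satisfies \eqref{e:pres_param_cond}. This is not true for an arbitrary $\vw$: Lemma~\ref{lemma:Ah_not_empty} requires $\veps\ge Ch\|\vw\|_{H^2(\Omega)}^2$, while \eqref{e:pres_param_cond} only guarantees $\veps\ge Ch(\|g\|_{L^1(\Omega)}+\Lambda)$, and $\|\vw\|_{H^2(\Omega)}$ can be arbitrarily large compared to $\|g\|_{L^1(\Omega)}+\Lambda$. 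Your fallback ``$\veps/h\to\infty$ or $\veps$ is chosen appropriately'' is not implied by the hypotheses ($\veps/h$ may stay bounded), and the remark ``$E(\vy)=E(\wb\vy)+$ bending'' is garbled: $E$ is translation invariant, so $E(\vy)=E(\wb\vy)$ exactly. The correct repair, which is precisely what the specific constant in \eqref{e:pres_param_cond} is designed for, is to restrict the competitors: since the lim-inf step already gives a cluster point $\wb\vy\in\A$ with $E(\wb\vy)\le\liminf E_h(\vy_h)\le\Lambda$, one has $\inf_{\A}E\le\Lambda$, so it suffices to test against mean-zero $\vw\in\A$ with $E(\vw)\le\Lambda$ (e.g.\ near-minimizers, or ultimately $\vw=\wb\vy$ itself). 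For such $\vw$, the metric constraint gives $\|\nabla\vw\|_{L^2(\Omega)}^2=\int_\Omega\tr g\lesssim\|g\|_{L^1(\Omega)}$, the coercivity $|\vw|_{H^2(\Omega)}^2\lesssim E(\vw)$ bounds the second-order part, and the Poincar\'e--Friedrichs inequality \eqref{eqn:fp-smoothing} bounds $\|\vw\|_{L^2(\Omega)}$; altogether $\|\vw\|_{H^2(\Omega)}^2\lesssim\|g\|_{L^1(\Omega)}+\Lambda$, and then \eqref{e:pres_param_cond} does yield $\veps\ge Ch\|\vw\|_{H^2(\Omega)}^2$, so Lemma~\ref{lemma:Ah_not_empty} makes the recovery sequence admissible and the rest of your chaining goes through unchanged. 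You flag this issue in your closing remark, but the proof itself needs this reduction spelled out rather than the unsupported claim about $\veps/h$.
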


We now provide a proof of the lim-inf property.

\begin{thm}[lim-inf of $E_h$]\label{lim-inf}
Let the prestrain defect parameter $\veps=\veps(h)\rightarrow 0$ as $h\rightarrow 0$.
Let $\{\vy_h\}_{h>0} \subset\A_{h,\veps}^k$ be a sequence of functions such that $E_h(\vy_h)\lesssim 1$. 
Then there exists  $\wb \vy \in \A$ with $\strokedint_{\Omega}\wb \vy=0$ such that (up to a subsequence) the shifted sequence $\wb\vy_h:=\vy_h-\strokedint_{\Omega}\vy_h \in\A_{h,\veps}^k$ satisfies $\wb\vy_h\to\wb \vy$, $\nabla_h\wb\vy_h\to\nabla\wb \vy$, $H_h(\wb\vy_h)\rightharpoonup D^2 \wb \vy$ in $L^2(\Omega)$ as $h,\veps\to0$ and
\[
E(\wb \vy)\le\liminf\limits_{h\to0}E_h(\wb\vy_h).
\]
\end{thm}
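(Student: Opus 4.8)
The plan is to combine the compactness result of Lemma~\ref{l:compactness}, the coercivity of $E_h$ (Theorem~\ref{coercivity:plates}), the gradient estimate (Lemma~\ref{bound-nabla}), and the weak convergence of the discrete Hessian (Lemma~\ref{weak-conv}), and then use weak lower semicontinuity of the quadratic part of the energy together with a passage to the limit in the metric constraint. First I would establish the compactness and convergence statements: since $E_h(\vy_h)\lesssim 1$, Theorem~\ref{coercivity:plates} gives $|\vy_h|_{H_h^2(\Omega)}^2\lesssim E_h(\vy_h)\lesssim 1$ (componentwise), and since $\vy_h\in\A_{h,\veps}^k$ with $\veps\to0$, Lemma~\ref{bound-nabla} yields $\|\nabla_h\vy_h\|_{L^2(\Omega)}^2\lesssim \veps+\|g\|_{L^1(\Omega)}\lesssim 1$. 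Hence each component $y_{h,m}$ satisfies the hypothesis \eqref{e:unif-bound} of Lemma~\ref{l:compactness}, so up to a subsequence the shifted functions $\wb y_{h,m}=y_{h,m}-\strokedint_\Omega y_{h,m}$ converge strongly in $L^2(\Omega)$ to some $\wb y_m\in H^2(\Omega)$ with $\strokedint_\Omega\wb y_m=0$ and $\nabla_h\wb y_{h,m}\to\nabla\wb y_m$ in $[L^2(\Omega)]^2$. Since $H_h$ is translation-invariant on constants, $H_h(\wb\vy_h)=H_h(\vy_h)$, and then Lemma~\ref{weak-conv} applies to each component (the uniform $|\cdot|_{H_h^2(\Omega)}$ bound and $L^2$ convergence being exactly its hypotheses), giving $H_h(\wb y_{h,m})\rightharpoonup D^2\wb y_m$ in $[L^2(\Omega)]^{2\times2}$.

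Next I would verify $\wb\vy\in\A$, i.e. that the limiting deformation satisfies the exact metric constraint $\nabla\wb\vy^T\nabla\wb\vy=g$ a.e. We know $\wb\vy\in[H^2(\Omega)]^3$ and $\nabla_h\wb\vy_h\to\nabla\wb\vy$ strongly in $L^2$, hence $\nabla_h\wb\vy_h^T\nabla_h\wb\vy_h\to\nabla\wb\vy^T\nabla\wb\vy$ in $L^1(\Omega)$ (product of strongly-$L^2$-convergent factors). On the other hand, the defect bound $D_h(\wb\vy_h)\le\veps$ controls the elementwise averages of $\nabla_h\wb\vy_h^T\nabla_h\wb\vy_h-g$; testing against a fixed smooth matrix field and using that piecewise-constant averaging converges to the identity on $L^1$ together with $\veps\to0$, one passes to the limit and concludes $\int_\Omega(\nabla\wb\vy^T\nabla\wb\vy-g):\Phi=0$ for all such $\Phi$, hence $\nabla\wb\vy^T\nabla\wb\vy=g$ a.e. Thus $\wb\vy\in\A$ and in particular the shifted sequence $\wb\vy_h$ still lies in $\A_{h,\veps}^k$ since $D_h$ is translation-invariant.

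For the energy inequality, write $E_h(\wb\vy_h)$ as the sum of its quadratic Hessian part and the nonnegative stabilization terms $\tfrac{\gamma_1}{2}\|\h^{-1/2}\jump{\nabla_h\wb\vy_h}\|_{L^2(\Gh^0)}^2+\tfrac{\gamma_0}{2}\|\h^{-3/2}\jump{\wb\vy_h}\|_{L^2(\Gh^0)}^2\ge0$. Discarding the stabilization terms from below, it suffices to show $\liminf_h Q_h(\wb\vy_h)\ge Q(\wb\vy)$ where $Q_h$ is the quadratic functional in $H_h(\wb\vy_h)$ appearing in \eqref{e:Eh_intro} and $Q$ is the corresponding functional in $D^2\wb\vy$ from \eqref{def:Eg_D2y}. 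This is exactly weak lower semicontinuity of a nonnegative quadratic form: the map $A\mapsto \tfrac{\mu}{12}|g^{-1/2}Ag^{-1/2}|^2+\tfrac{\mu\lambda}{12(2\mu+\lambda)}\tr(g^{-1/2}Ag^{-1/2})^2$ is a nonnegative quadratic form in $A$ with coefficients in $L^\infty$ (since $g\in L^\infty$ and $g^{-1}\in L^\infty$ as $g$ is uniformly SPD), hence the associated integral functional is convex and strongly-continuous on $[L^2(\Omega)]^{3\times2\times2}$, therefore weakly lower semicontinuous; applying this with $A=H_h(\wb\vy_h)\rightharpoonup D^2\wb\vy$ gives the claim and completes the proof.

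\textbf{Main obstacle.} The delicate point is the passage to the limit in the metric constraint: one must carefully combine the $L^1$ convergence of the nonlinear quantity $\nabla_h\wb\vy_h^T\nabla_h\wb\vy_h$ (which uses the strong gradient convergence from compactness, itself a nontrivial consequence of the smoothing-operator machinery in Lemma~\ref{l:compactness}) with the elementwise-average control provided by $D_h(\wb\vy_h)\le\veps\to0$, the latter requiring an $L^\infty$-bound-free argument since we only control averages, not pointwise values. Handling the interplay between the mesh-dependent averaging and the $\veps\to0$ limit — and ensuring the test-function argument is legitimate for merely $L^\infty$ metrics $g$ — is where the real care is needed; the lower semicontinuity and the compactness inputs are, by contrast, essentially quotations of earlier results.
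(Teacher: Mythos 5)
Your proposal is correct and follows the paper's overall skeleton (coercivity plus the gradient estimate feed the compactness lemma; weak convergence of $H_h$ plus weak lower semicontinuity of the quadratic part give the lim-inf; the stabilization terms are simply dropped). The one place where you genuinely deviate is the admissibility step. The paper proves the stronger, quantitative statement $\|\nabla_h\wb\vy_h^T\nabla_h\wb\vy_h-g\|_{L^1(\Omega)}\lesssim h+\veps$ by an elementwise Poincar\'e-type argument (borrowed from Step 4 of Proposition 5.1 in \cite{bonito2018}), which is why the bound involves $\|\nabla g\|_{L^1(\Omega)}$ and uses the assumption $g\in[H^1(\Omega)]^{2\times2}$; it then passes to the limit directly in $L^1$. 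You instead keep only the elementwise averages controlled by $D_h(\wb\vy_h)\le\veps$ and test the residual $w_h:=\nabla_h\wb\vy_h^T\nabla_h\wb\vy_h-g$ against a smooth matrix field $\Phi$, splitting on each $\K$ into $\Phi$ minus its elementwise mean (contributing $O(h)\,\|\nabla\Phi\|_{L^\infty}\|w_h\|_{L^1(\Omega)}$, with $\|w_h\|_{L^1}$ bounded by the gradient estimate) and the mean part (contributing at most $\|\Phi\|_{L^\infty}\veps$); combined with the $L^1$ convergence $w_h\to \nabla\wb\vy^T\nabla\wb\vy-g$ this gives $\int_\Omega(\nabla\wb\vy^T\nabla\wb\vy-g):\Phi=0$ and hence $\wb\vy\in\A$. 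This duality argument is sound; it buys you independence from $\nabla g$ (only $g\in L^1$ and the uniform $L^2$ gradient bound are needed in this step), at the price of losing the explicit $O(h+\veps)$ rate the paper's version provides. In the lim-inf step you treat the whole integrand as one convex, strongly continuous quadratic functional of $A$, whereas the paper splits it into the Frobenius term (weak lower semicontinuity of the $L^2$ norm) and the trace term (via the seminorm $p$ in \eqref{e:p}); these are equivalent, and both your argument and the paper's weak convergence \eqref{e:weakg} rely on $g^{-\frac12}$ acting as a bounded multiplier on $L^2$, which you state explicitly and the paper uses implicitly.
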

\begin{proof}
We proceed in several steps.

\smallskip\noindent
{\it Step 1: Accumulation point.}
Lemma~\ref{bound-nabla} (gradient estimate), Theorem~\ref{coercivity:plates} (coercivity of $E_h$) and the uniform bound $E_h(\vy_h) \lesssim 1$ guarantee that
\begin{equation}\label{e:unif_bound}
\| \nabla_h \vy_h \|^2_{L^2(\Omega)} + | \vy_h |^2_{H^2_h(\Omega)} \lesssim \veps + \| g\|_{L^1(\Omega)} + E_h(\vy_h) \lesssim 1.
\end{equation}
We can thus invoke Lemma~\ref{l:compactness} (compactness) with $v_h= y_{h,m}$ and deduce the existence of $\wb \vy \in [H^2(\Omega)]^3$ with $\strokedint_{\Omega}\wb \vy=0$ such that $\wb\vy_h\to\wb \vy$ and $\nabla_h\wb\vy_h\to\nabla\wb \vy$ as $h \to 0$.

\smallskip\noindent
{\it Step 2: Admissible deformation.} 
We now show that $\wb \vy\in \A$. Proceeding as in Step 4 of Proposition 5.1 in  \cite{bonito2018}, which considers the case $g=I$, we have that
\begin{equation}\label{l1cons}
\|\nabla_h\wb\vy_h^T\nabla_h\wb\vy_h-g\|_{L^1(\Omega)}\lesssim h(\| D_h^2 \wb\vy_h\|_{L^2(\Omega)} \| \nabla_h \wb \vy_h \|_{L^2(\Omega)} +\| \nabla g\|_{L^1(\Omega)}) +D_h(\wb\vy_h) \lesssim h+ \veps,
\end{equation}
where we used \eqref{e:unif_bound}, the fact that $D^2_h\wb \vy_h=D^2_h\vy_h$, $\nabla_h\wb \vy_h=\nabla_h\vy_h$ and $D_h(\wb \vy_h) = D_h(\vy_h) \leq \veps$ for $\vy_h \in \A_{h,\veps}^k$.
Hence, taking advantage of the relation~\eqref{e:add_sub_contraint} and the  convergence $\nabla_h\wb\vy_h\to\nabla\wb \vy$ in $[L^2(\Omega)]^3$, we deduce that
\begin{equation*}
\|\nabla\wb \vy^T\nabla\wb \vy-g\|_{L^1(\Omega)}\le\left(\|\nabla\wb \vy\|_{L^2(\Omega)}+\|\nabla_h\wb\vy_h\|_{L^2(\Omega)}\right)\|\nabla_h\wb\vy_h-\nabla\wb \vy\|_{L^2(\Omega)}+\|\nabla_h\wb\vy_h^T\nabla_h\wb\vy_h-g\|_{L^1(\Omega)}\to0
\end{equation*}
as $h,\veps\to0$. This proves that $\nabla\wb \vy^T\nabla\wb \vy=g$ a.e. in $\Omega$, and hence $\wb \vy\in\A$. 

\smallskip\noindent
{\it Step 3: $\lim-\inf$ property.} 
Thanks to Lemma \ref{weak-conv} (weak convergence of $H_h$)  we have $H_h(\wb y_{h,m})\rightharpoonup D^2 \wb y_m$ as $h\rightarrow 0$ for $m=1,2,3$, and so 
\begin{equation}\label{e:weakg}
g^{-\frac{1}{2}}H_h(\wb y_{h,m})g^{-\frac{1}{2}}\rightharpoonup g^{-\frac{1}{2}}D^2 \wb y_m g^{-\frac{1}{2}} \qquad  \textrm{as }h\rightarrow 0, \qquad m=1,2,3.
\end{equation}
Thus, the weak lower semi-continuity of the $L^2(\Omega)$ norm implies that 
\begin{equation*}
\int_{\Omega}|g^{-\frac{1}{2}}D^2 \wb y_mg^{-\frac{1}{2}}|^2\le \liminf_{h\to0}\int_{\Omega}|g^{-\frac{1}{2}}H_h(\wb y_{h,m})g^{-\frac{1}{2}}|^2, \qquad m=1,2,3.
\end{equation*} 

A similar estimate for the trace terms in $E_h$ and $E$ can be derived.
First note that $p:[L^2(\Omega)]^{2\times2}\rightarrow \mathbb{R}$ defined by $p(F):=(\int_{\Omega}\tr(F)^2)^{\frac{1}{2}}$ is convex (it is a semi-norm) and satisfies
\begin{equation}\label{e:p}
p(F_n) \to p(F) \qquad \textrm{when }F_n \to F \quad \textrm{strongly in }[L^2(\Omega)]^{2\times 2}. 
\end{equation}
In particular, $p$ is lower semi-continuous with respect to the strong topology of $[L^2(\Omega)]^{2\times2}$. Consequently it is also lower semi-continuous with respect to the weak topology and so
\begin{equation*}
\int_{\Omega}\tr(g^{-\frac{1}{2}}D^2 \wb y_mg^{-\frac{1}{2}})^2\le \liminf_{h\to0}\int_{\Omega}\tr(g^{-\frac{1}{2}}H_h(\wb y_{h,m})g^{-\frac{1}{2}})^2, \qquad m=1,2,3,
\end{equation*} 
follows from the weak convergence property \eqref{e:weakg}.

It remains to use the fact that the remaining terms in $E_h$ (namely the stabilization terms) are positive, to conclude that $E(\wb \vy)\le \liminf\limits_{h\to0}E_h(\wb\vy_h)$ as desired.    
\end{proof}

We now discuss the lim-sup property. It turns out that in our context, we are able to construct a recovery sequence with strongly converging energies. 

\begin{thm}[lim-sup of $E_h$]\label{lim-sup}
For any $\vy\in \A$, there exists a recovery sequence $\{\vy_h\}_{h>0}\subset\A_{h,\veps}^k\cap [H^1(\Omega)]^3$ such that
$$\vy_h\to\vy \quad \mbox{in } \,\, [L^2(\Omega)]^3 \qquad \mbox{ as } h\to0.$$
Moreover, for $\veps\ge Ch\|y\|_{H^2(\Omega)}^2$, where $C$ is the constant appearing in Lemma~\ref{lemma:Ah_not_empty} ($\A_{h,\veps}^k$ is non-empty), we have $E(\vy) = \lim_{h\to 0} E_h(\vy_h)$.
\end{thm}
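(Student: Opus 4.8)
The plan is to take the natural candidate for the recovery sequence, namely the Lagrange interpolant $\vy_h := \mathcal{I}_h^k \vy \in [\V_h^k \cap H^1(\Omega)]^3$, and verify the three required properties: (i) membership in $\A_{h,\veps}^k$, (ii) $L^2$-convergence $\vy_h \to \vy$, and (iii) convergence of energies $E_h(\vy_h) \to E(\vy)$. Step (i) is immediate from Lemma~\ref{lemma:Ah_not_empty}: since $\vy \in \A \subset [H^2(\Omega)]^3$ and $\veps \ge Ch\|\vy\|_{H^2(\Omega)}^2$, we get $D_h(\vy_h) \le Ch\|\vy\|_{H^2(\Omega)}^2 \le \veps$. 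Step (ii) is the standard interpolation estimate $\|\vy - \mathcal{I}_h^k \vy\|_{L^2(\Omega)} \lesssim h^2 |\vy|_{H^2(\Omega)} \to 0$, which also uses $\vy \in [H^2(\Omega)]^3$.

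The heart of the proof is step (iii). First I would observe that since $\vy_h \in H^1(\Omega)$, its jumps $\jump{\vy_h}$ vanish on $\Gh^0$, so the last stabilization term $\frac{\gamma_0}{2}\|\h^{-3/2}\jump{\vy_h}\|_{L^2(\Gh^0)}^2 = 0$. The remaining stabilization term is $\frac{\gamma_1}{2}\|\h^{-1/2}\jump{\nabla_h\vy_h}\|_{L^2(\Gh^0)}^2$, which is exactly $\|R_h(\jump{\nabla_h\vy_h})\|$-controlled and, by Lemma~\ref{strong-conv} (strong convergence of $H_h$), tends to $0$ as $h\to 0$ — indeed that lemma asserts $R_h(\jump{\nabla_h\vy_h}) \to 0$ in $[L^2(\Omega)]^{2\times 2}$, and by the stability bound of Lemma~\ref{L2bound-lifting} together with an inverse-type argument (or directly from the proof of Lemma~\ref{strong-conv}) the quantity $\|\h^{-1/2}\jump{\nabla_h\vy_h}\|_{L^2(\Gh^0)}$ itself goes to zero. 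Then the core point is Lemma~\ref{strong-conv} again: $H_h(y_{h,m}) \to D^2 y_m$ strongly in $[L^2(\Omega)]^{2\times 2}$ for each $m=1,2,3$. Since $g \in [L^\infty(\Omega)]^{2\times 2}$ is SPD a.e.\ with $g^{-1/2} \in [L^\infty(\Omega)]^{2\times 2}$, the maps $F \mapsto g^{-1/2} F g^{-1/2}$ and $F \mapsto \tr(g^{-1/2} F g^{-1/2})$ are bounded linear operators on $[L^2(\Omega)]^{2\times 2}$, hence preserve strong $L^2$-convergence; consequently
\[
\int_{\Omega}\big|g^{-\frac12}H_h(y_{h,m})g^{-\frac12}\big|^2 \to \int_{\Omega}\big|g^{-\frac12}D^2 y_m g^{-\frac12}\big|^2, \qquad \int_{\Omega}\tr\big(g^{-\frac12}H_h(y_{h,m})g^{-\frac12}\big)^2 \to \int_{\Omega}\tr\big(g^{-\frac12}D^2 y_m g^{-\frac12}\big)^2.
\]
Summing over $m$ and adding the vanishing stabilization contributions gives $E_h(\vy_h) \to E(\vy)$, which together with the already-established lim-inf inequality (Theorem~\ref{lim-inf}) — noting that here the shift $\strokedint_\Omega \vy_h$ need not be zero but energies are shift-invariant — yields the equality.

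The main obstacle, modest as it is, is bookkeeping around the hypotheses of Lemma~\ref{strong-conv}: that lemma is stated for $v_h = \mathcal{I}_h^k v$ with $v \in H^2(\Omega)$, so one must apply it component-wise to $\vy = (y_m)_{m=1}^3$, which is exactly the setting here. A secondary point requiring a word of care is showing the stabilization term $\|\h^{-1/2}\jump{\nabla_h\vy_h}\|_{L^2(\Gh^0)} \to 0$ rather than merely staying bounded; this follows because the jumps of $\nabla_h \mathcal{I}_h^k \vy$ across interior edges are controlled by the $H^2$-interpolation error, giving $\|\h^{-1/2}\jump{\nabla_h\vy_h}\|_{L^2(\Gh^0)} \lesssim h^{1/2}|\vy|_{H^2(\Omega)} \to 0$, or alternatively it is contained in the proof of Lemma~\ref{strong-conv} in the appendix. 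No compactness or subsequence extraction is needed for the lim-sup direction since we have an explicit, fully convergent sequence.
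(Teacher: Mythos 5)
Your proposal is correct and follows essentially the same route as the paper's proof: the Lagrange interpolant $\mathcal I_h^k\vy$ as recovery sequence, Lemma~\ref{lemma:Ah_not_empty} for membership in $\A_{h,\veps}^k$, interpolation estimates for the $L^2$-convergence, and Lemma~\ref{strong-conv} combined with the boundedness of $g^{-\frac12}$ to pass to the limit in both energy terms, with the stabilization terms vanishing. The only caveat is your claimed rate $\|\h^{-1/2}\jump{\nabla_h\vy_h}\|_{L^2(\Gh^0)}\lesssim h^{1/2}|\vy|_{H^2(\Omega)}$, which does not hold under mere $H^2$ regularity of $\vy$ (and deducing decay of the jump from $R_h(\jump{\nabla_h\vy_h})\to 0$ via an ``inverse-type'' argument is not justified for arbitrary lifting degree $l_1$); the correct justification is the one you also cite, namely Step~2 of the proof of Lemma~\ref{strong-conv} in Appendix~\ref{sec:weak_strong}, which bounds this jump term by $\|D_h^2\vy_h-D^2\vy\|_{L^2(\Omega)}\to 0$ through a density argument---exactly what the paper relies on, so the gap is only in the stated rate, not in the conclusion.
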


\begin{proof}
Consider the sequence $\vy_h:=\mathcal{I}_h^k \vy \in [\V^k_h]^3\cap [H^1(\Omega)]^3$ consisting of the Lagrange interpolants of $\vy$ and note that, thanks to Lemma \ref{lemma:Ah_not_empty}, $\vy_h \in \A_{h,\veps}^k$ since $\veps \geq Ch\| \vy\|_{H^2(\Omega)}^2$ by assumption. Moreover, the fact that $\vy_h \to \vy$ in $[L^2(\Omega)]^3$ as $h\to 0$ stems directly from interpolation estimates.
Furthermore, Lemma \ref{strong-conv} (strong convergence of $H_h$) applied to $v_h=y_{h,m}$, for $m=1,2,3$, yields the convergence in norm
\begin{equation}\label{e:strong_infsup}
\lim_{h\to0}\int_{\Omega}|g^{-\frac{1}{2}}H_h(y_{h,m})g^{-\frac{1}{2}}|^2= \int_{\Omega}|g^{-\frac{1}{2}}D^2y_mg^{-\frac{1}{2}}|^2, \qquad m=1,2,3.
\end{equation}
Similarly for the trace term, thanks to \eqref{e:p} and the strong convergence $g^{-\frac{1}{2}}H_h(y_{h,m})g^{-\frac{1}{2}}\to g^{-\frac{1}{2}}D^2y_mg^{-\frac{1}{2}}$ in $[L^2(\Omega)]^{2\times2}$ for $m=1,2,3$ we have
\begin{equation*}
\lim_{h\to0}\int_{\Omega}\tr(g^{-\frac{1}{2}}H_h(y_{h,m})g^{-\frac{1}{2}})^2 = \int_{\Omega}\tr(g^{-\frac{1}{2}}D^2 y_mg^{-\frac{1}{2}})^2, \qquad m=1,2,3.
\end{equation*} 

Gathering these estimates together with the fact that the stabilization terms in $E_h$ vanish in the limit $h\to 0$, according to Lemma \ref{strong-conv}, we have $E(\vy)=\lim_{h\to0}E_h (\vy_h)$ as desired. 
\end{proof}

%%%%%%%%%%%%%%%%%%%%%%%%%%%%%%%%%%%%%%%%%%%%%%%%%%%%%%%%%%%%%%%%%%%%%%%%%%%
\section{Discrete gradient flow}\label{sec:GF}
%%%%%%%%%%%%%%%%%%%%%%%%%%%%%%%%%%%%%%%%%%%%%%%%%%%%%%%%%%%%%%%%%%%%%%%%%%%

We advocate a discrete $H^2$-gradient flow to determine local minimizers $\vy_h \in \A_{h,\veps}^k$ of $E_h(\vy_h)$, which is driven by the Hilbert structure induced by the scalar product $(\vv_h,\vw_h)_{H_h^2(\Omega)}$ defined in \eqref{e:H2metric}. We now introduce this flow and discuss its crucial properties.

Given an initial guess  $\vy_h^0\in\A_{h,\veps_{0}}^k$ and a pseudo time-step $\tau>0$, we iteratively compute $\vy_h^{n+1}:=\vy_h^{n}+\delta\vy_h^{n+1}\in [\V_h^k]^3$ that minimizes the functional
\begin{equation}\label{gf:minimization}
\vy_h\mapsto G_h(\vy_h) := \frac{1}{2\tau}\|\vy_h-\vy_h^n\|_{H_h^2(\Omega)}^2+E_h(\vy_h),
\end{equation}
under the \emph{linearized metric constraint} $\delta\vy_h^{n+1}\in\mathcal{F}_h(\vy_h^n)$ for the increment, where
\begin{equation}\label{gf:constraint}
\mathcal{F}_h(\vy_h^n):=\left\{ \vv_h\in [\V_h^k]^3: \,\,
	L_{\K}(\vy_h^n;\vv_h):=\int_{\K}\nabla\vv_h^T\nabla\vy_h^n+(\nabla\vy_h^n)^T\nabla\vv_h=0 \quad \forall\, \K\in\Th\right\}.
\end{equation}
We emphasize that \eqref{gf:minimization} minimizes $E_h(\vy_h)$ but penalizes the deviation of $\vy_h^{n+1}$ from $\vy_h^n$. In the formal limit $\tau\to0$, the first variation of \eqref{gf:minimization} becomes an ODE in $H^2_h(\Omega)$ against the variational derivative $\delta E_h(\vy_h)$ of $E_h(\vy_h)$. The first variation of  \eqref{gf:minimization} does indeed give the first optimality condition:
$\delta\vy_h^{n+1} \in \mathcal{F}_h(\vy_h^n)$ satisfies the Euler-Lagrange system of equations
\begin{equation}\label{gf:variation}
\begin{array}{rcl}
\tau^{-1}(\delta\vy_h^{n+1},\vv_h)_{H_h^2(\Omega)}+a_h(\vy_h^{n+1},\vv_h)= 0 \qquad \forall\,\vv_h \in \mathcal{F}_h(\vy_h^n),
\end{array}
\end{equation}
where $a_h(\vy_h,\vv_h) = \delta E_h(\vy_h)(\vv_h)$ is given by
\begin{equation} \label{def:form_ah}
\begin{split}
a_h(\vy_h,\vv_h)  := & \frac{\mu}{6}\sum_{m=1}^3\int_{\Omega}(\g^{-\frac{1}{2}}H_h(y_{h,m}) \g^{-\frac{1}{2}}):(\g^{-\frac{1}{2}}H_h(v_{h,m}) \g^{-\frac{1}{2}}) \\
 & +\frac{\mu\lambda }{6(2\mu+\lambda)}\sum_{m=1}^3\int_{\Omega}\tr\left(\g^{-\frac{1}{2}}H_h(y_{h,m}) \g^{-\frac{1}{2}}\right)\tr\left(\g^{-\frac{1}{2}}H_h(v_{h,m}) \g^{-\frac{1}{2}}\right) \\
 & +\gamma_1 \big(\h^{-1}\jump{\nabla_h\vy_h},\jump{\nabla_h\vv_h}\big)_{L^2(\Gh^0)}+\gamma_0 \big(\h^{-3}\jump{\vy_h},\jump{\vv_h} \big)_{L^2(\Gh^0)}.
\end{split}
\end{equation}
We refer to \cite{BGNY2020_comp} for an implementation of the method using Lagrange multipliers to enforce the constraint. 
For convenience in the analysis below, we rewrite  \eqref{gf:variation} as
\begin{equation}\label{gf:system}
\tau^{-1}(\delta\vy_h^{n+1},\vv_h)_{H_h^2(\Omega)} +  a_h(\delta\vy_h^{n+1},\vv_h) = -a_h(\vy_h^n,\vv_h) \qquad \forall\, \vv_h \in \mathcal{F}_h(\vy_h^n).
\end{equation}

\begin{remark}[well-posedness]\label{kernel}
Note that the bilinear form $a_h(\cdot,\cdot)$ has a nontrivial kernel in $\mathcal{F}_h(\vy_h^n)$; it contains at least the constants. 
However, $\tau^{-1}(\cdot,\cdot)_{H_h^2(\Omega)} +  a_h(\cdot,\cdot)$ is coercive and continuous on $\mathcal{F}_h(\vy_h^n) \not = \emptyset$ thanks to the $L^2$ term in the $H^2_h$ metric. Hence, the Lax-Milgram theory guarantees the existence and uniqueness of $\delta y_h^{n+1} \in \mathcal{F}_h(\vy_h^n) $ satisfying \eqref{gf:system}.
\end{remark}

We now embark on the study of properties of the discrete gradient flow \eqref{gf:minimization}. We start with a simple observation: since $\vy_h^{n+1}$ minimizes the functional $G_h$ in \eqref{gf:minimization}, we deduce that $G_h(\vy_h^{n+1}) \le G_h(\vy_h^n)$ or equivalently
$
E_h(\vy_h^{n+1}) + \frac{1}{2\tau} \|\delta\vy_h^{n+1}\|_{H_h^2(\Omega)}^2 \le E_h(\vy_h^n).
$
We first show an improved energy reduction at each step that hinges on the quadratic structure of $G_h$.

\begin{prop}[energy decay]\label{prop:energy-decay}
If $\delta\vy_h^{n+1} \in  \mathcal{F}_h(\vy_h^n)$ solves \eqref{gf:system}, then the next iterate $\vy_h^{n+1}=\vy_h^n+\delta\vy_h^{n+1}$ satisfies
\begin{equation}\label{eqn:control_energy}
E_h(\vy_h^{n+1}) + \frac{1}{\tau} \|\delta\vy_h^{n+1}\|_{H_h^2(\Omega)}^2 \le E_h(\vy_h^n).
\end{equation}
\end{prop}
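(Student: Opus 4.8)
The plan is to exploit the quadratic structure of $E_h$ and the $H^2_h$-metric. First I would observe that $E_h$ is a quadratic functional: from \eqref{e:Eh_intro} and the linearity of $H_h$, $\jump{\nabla_h\cdot}$ and $\jump{\cdot}$, we can write $E_h(\vy_h) = \tfrac12 a_h(\vy_h,\vy_h)$, where $a_h$ is the symmetric bilinear form in \eqref{def:form_ah} (indeed $a_h = \delta E_h$ for a quadratic form coincides with its polarization). Hence, writing $\vy_h^{n+1} = \vy_h^n + \delta\vy_h^{n+1}$ and expanding,
\begin{equation*}
E_h(\vy_h^{n+1}) = E_h(\vy_h^n) + a_h(\vy_h^n, \delta\vy_h^{n+1}) + \tfrac12 a_h(\delta\vy_h^{n+1}, \delta\vy_h^{n+1}) = E_h(\vy_h^n) + a_h(\vy_h^n, \delta\vy_h^{n+1}) + E_h(\delta\vy_h^{n+1}).
\end{equation*}

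Next I would test the Euler--Lagrange identity \eqref{gf:system} with $\vv_h = \delta\vy_h^{n+1} \in \mathcal{F}_h(\vy_h^n)$, which is admissible. This gives
\begin{equation*}
\tau^{-1}\|\delta\vy_h^{n+1}\|_{H_h^2(\Omega)}^2 + a_h(\delta\vy_h^{n+1}, \delta\vy_h^{n+1}) = -a_h(\vy_h^n, \delta\vy_h^{n+1}),
\end{equation*}
so that $a_h(\vy_h^n, \delta\vy_h^{n+1}) = -\tau^{-1}\|\delta\vy_h^{n+1}\|_{H_h^2(\Omega)}^2 - 2E_h(\delta\vy_h^{n+1})$. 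Substituting into the expansion above yields
\begin{equation*}
E_h(\vy_h^{n+1}) = E_h(\vy_h^n) - \tau^{-1}\|\delta\vy_h^{n+1}\|_{H_h^2(\Omega)}^2 - 2E_h(\delta\vy_h^{n+1}) + E_h(\delta\vy_h^{n+1}) = E_h(\vy_h^n) - \tau^{-1}\|\delta\vy_h^{n+1}\|_{H_h^2(\Omega)}^2 - E_h(\delta\vy_h^{n+1}).
\end{equation*}
Since $E_h \ge 0$ (it is a sum of squares), the term $-E_h(\delta\vy_h^{n+1})$ is nonpositive and can simply be dropped, giving exactly \eqref{eqn:control_energy}.

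The only point requiring a little care — and the main (very minor) obstacle — is the identification $E_h(\vy_h) = \tfrac12 a_h(\vy_h,\vy_h)$ and, relatedly, $a_h(\vy_h,\vv_h) = \delta E_h(\vy_h)(\vv_h)$; one checks this termwise, using that for a symmetric bilinear form $q(u,v)$ the functional $u \mapsto \tfrac12 q(u,u)$ has first variation $v \mapsto q(u,v)$, and that the factor-of-two discrepancy between the $\tfrac{\mu}{12}$, $\tfrac{\mu\lambda}{12(2\mu+\lambda)}$, $\tfrac{\gamma_1}{2}$, $\tfrac{\gamma_0}{2}$ coefficients in \eqref{e:Eh_intro} and the $\tfrac{\mu}{6}$, $\tfrac{\mu\lambda}{6(2\mu+\lambda)}$, $\gamma_1$, $\gamma_0$ coefficients in \eqref{def:form_ah} is precisely the factor of $2$. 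Everything else is bookkeeping; no inequalities beyond $E_h \ge 0$ and no compactness or interpolation machinery is needed. I would present the argument in the three displayed steps above.
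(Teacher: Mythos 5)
Your proof is correct and follows essentially the same route as the paper: both exploit the quadratic identity $E_h=\tfrac12 a_h(\cdot,\cdot)$, test the Euler--Lagrange equation \eqref{gf:system} with $\vv_h=\delta\vy_h^{n+1}$, and then discard the nonnegative remainder $E_h(\delta\vy_h^{n+1})=\tfrac12 a_h(\delta\vy_h^{n+1},\delta\vy_h^{n+1})\ge 0$. The paper phrases this via the identity $a(a-b)=\tfrac12a^2-\tfrac12b^2+\tfrac12(a-b)^2$ applied to $a_h(\vy_h^{n+1},\delta\vy_h^{n+1})$, while you expand $E_h(\vy_h^{n+1})$ directly and solve for $a_h(\vy_h^n,\delta\vy_h^{n+1})$; these are the same algebra rearranged.
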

\begin{proof}
It suffices to utilize the identity $a(a-b)=\frac{1}{2}a^2-\frac{1}{2}b^2+\frac{1}{2}(a-b)^2$ to write
\begin{equation} \label{rel_bilinear}
\begin{aligned}  
a_h(\vy_h^{n+1} &,\delta \vy_h^{n+1}) = a_h(\vy_h^{n+1},\vy_h^{n+1}-\vy_h^n) \\
& = \frac 1 2 a_h(\vy_h^{n+1}, \vy_h^{n+1}) - \frac 1 2 a_h(\vy_h^{n}, \vy_h^{n}) + \frac 1 2 a_h(\delta \vy_h^{n+1},\delta  \vy_h^{n+1}) \geq E_h(\vy_h^{n+1}) - E_h(\vy_h^{n}),
\end{aligned}
\end{equation}  
and to replace the left-hand side by $-\frac{1}{\tau} \|\delta\vy_h^{n+1}\|_{H_h^2(\Omega)}^2$ according to \eqref{gf:system} for $\vv_h=\delta\vy_h^{n+1}$.
\end{proof}

Note that \eqref{eqn:control_energy} gives a precise control of the energy decay provided $\delta\vy_h^{n+1}\ne0$. Moreover, upon summing \eqref{eqn:control_energy} over $n=0,1,\ldots,N-1$ for $N\ge 1$, we get the estimate
\begin{equation} \label{e:control_final_energy}
\frac{1}{\tau}\sum_{n=0}^{N-1}\|\delta\vy_h^{n+1}\|_{H_h^2(\Omega)}^2+E_h(\vy_h^N) \leq E_h(\vy_h^0).
\end{equation}

The next result quantifies the prestrain defect of iterates obtained within the gradient flow starting from an initial deformation $\vy_h^0 \in \A_{h,\veps_0}^k$ for some $\veps_0$. 

\begin{prop}[control of metric defect]\label{prop:control-defect}
Let $\vy_h^0\in\A^k_{h,\veps_0}$. Then, for any $N\ge 1$, the $N^{th}$ iterate $\vy_h^N$ of the gradient flow satisfies
\begin{equation} \label{eqn:control_defect}
D_h(\vy_h^N)=\sum_{T\in\mathcal{T}_h}\left|\int_{\K}(\nabla\vy_h^N)^T\nabla\vy_h^N-g \, \right|\le \veps_0+c\tau E_h(\vy_h^0),
\end{equation}
where $c>0$ is the hidden constant in \eqref{eqn:bound_noBC}.
In particular, if $\vy_h := \mathcal I_h^k \vy^0$ is the Lagrange interpolant of some $\vy^0 \in \A$, then
\begin{equation}\label{e:uniform_defect}
D_h(\vy_h^N) \lesssim (h+\tau) \| \vy^0\|_{H^2(\Omega)}^2.
\end{equation}
\end{prop}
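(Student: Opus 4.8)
The plan is to track how the prestrain defect changes from one gradient flow iterate to the next and then sum the increments. First I would fix $n$ and write $\vy_h^{n+1} = \vy_h^n + \delta\vy_h^{n+1}$, then expand
\[
(\nabla\vy_h^{n+1})^T\nabla\vy_h^{n+1} - g = \big( (\nabla\vy_h^n)^T\nabla\vy_h^n - g \big) + \big( (\nabla\delta\vy_h^{n+1})^T\nabla\vy_h^n + (\nabla\vy_h^n)^T\nabla\delta\vy_h^{n+1} \big) + (\nabla\delta\vy_h^{n+1})^T\nabla\delta\vy_h^{n+1}.
\]
Integrating over an element $\K$, the middle (linear) term is precisely $L_\K(\vy_h^n;\delta\vy_h^{n+1})$, which vanishes because $\delta\vy_h^{n+1}\in\mathcal{F}_h(\vy_h^n)$. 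Hence
\[
\int_{\K}(\nabla\vy_h^{n+1})^T\nabla\vy_h^{n+1} - g = \int_{\K}(\nabla\vy_h^n)^T\nabla\vy_h^n - g + \int_{\K}(\nabla\delta\vy_h^{n+1})^T\nabla\delta\vy_h^{n+1}.
\]
Taking absolute values, summing over $\K\in\Th$, and using the triangle inequality, I get
\[
D_h(\vy_h^{n+1}) \leq D_h(\vy_h^n) + \|\nabla_h\delta\vy_h^{n+1}\|_{L^2(\Omega)}^2.
\]
This is the key per-step recursion: the linearized constraint is exactly what kills the first-order change in the defect, leaving only a quadratic remainder.

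Next I would iterate this bound from $n=0$ to $N-1$ to obtain
\[
D_h(\vy_h^N) \leq D_h(\vy_h^0) + \sum_{n=0}^{N-1}\|\nabla_h\delta\vy_h^{n+1}\|_{L^2(\Omega)}^2 \leq \veps_0 + \sum_{n=0}^{N-1}\|\nabla_h\delta\vy_h^{n+1}\|_{L^2(\Omega)}^2,
\]
using $\vy_h^0\in\A_{h,\veps_0}^k$. To control the sum, I would apply Lemma~\ref{estimate-smoothing-interpolation}, specifically estimate \eqref{eqn:bound_noBC}, componentwise to $\delta\vy_h^{n+1}$: since $\delta\vy_h^{n+1}$ has mean value that need not vanish, I invoke $\|\nabla_h\delta\vy_h^{n+1}\|_{L^2(\Omega)}^2 \lesssim \|\delta\vy_h^{n+1}\|_{L^2(\Omega)}^2 + |\delta\vy_h^{n+1}|_{H_h^2(\Omega)}^2 \leq \|\delta\vy_h^{n+1}\|_{H_h^2(\Omega)}^2$, with $c$ the hidden constant in \eqref{eqn:bound_noBC}. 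Then
\[
\sum_{n=0}^{N-1}\|\nabla_h\delta\vy_h^{n+1}\|_{L^2(\Omega)}^2 \leq c\sum_{n=0}^{N-1}\|\delta\vy_h^{n+1}\|_{H_h^2(\Omega)}^2 \leq c\,\tau\, E_h(\vy_h^0),
\]
where the last inequality is exactly the summed energy-decay estimate \eqref{e:control_final_energy} (after multiplying through by $\tau$ and dropping the nonnegative term $\tau E_h(\vy_h^N)$). Combining gives \eqref{eqn:control_defect}.

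For the final assertion \eqref{e:uniform_defect}, I would take $\vy_h^0 := \mathcal{I}_h^k\vy^0$ with $\vy^0\in\A$. Lemma~\ref{lemma:Ah_not_empty} gives $D_h(\vy_h^0) \lesssim h\|\vy^0\|_{H^2(\Omega)}^2$, so $\vy_h^0\in\A_{h,\veps_0}^k$ with $\veps_0 \lesssim h\|\vy^0\|_{H^2(\Omega)}^2$. It remains to bound $E_h(\vy_h^0)$; here I would use that $E_h(\mathcal{I}_h^k\vy^0) \to E(\vy^0)$ as $h\to0$ by Theorem~\ref{lim-sup} (lim-sup of $E_h$), hence $E_h(\vy_h^0) \lesssim 1$ uniformly, and more precisely $E_h(\vy_h^0)\lesssim \|\vy^0\|_{H^2(\Omega)}^2$ via the same strong-convergence estimates underlying that theorem together with the stability of the Lagrange interpolant. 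Plugging $\veps_0 \lesssim h\|\vy^0\|_{H^2(\Omega)}^2$ and $\tau E_h(\vy_h^0) \lesssim \tau\|\vy^0\|_{H^2(\Omega)}^2$ into \eqref{eqn:control_defect} yields $D_h(\vy_h^N) \lesssim (h+\tau)\|\vy^0\|_{H^2(\Omega)}^2$.

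The main obstacle, and the conceptual heart of the argument, is the per-step recursion: one must recognize that $L_\K(\vy_h^n;\delta\vy_h^{n+1})=0$ eliminates the linear-in-$\delta$ contribution to the defect, so that only the quadratic term $\|\nabla_h\delta\vy_h^{n+1}\|_{L^2}^2$ survives — and then that this quadratic term is summable precisely because the energy-decay estimate \eqref{e:control_final_energy} provides $\tau^{-1}\sum_n\|\delta\vy_h^{n+1}\|_{H_h^2(\Omega)}^2 \leq E_h(\vy_h^0)$. A minor technical point is the need for the full norm $\|\cdot\|_{H_h^2(\Omega)}$ rather than the seminorm $|\cdot|_{H_h^2(\Omega)}$ when bounding $\|\nabla_h\delta\vy_h^{n+1}\|_{L^2(\Omega)}$, since the increments $\delta\vy_h^{n+1}$ are not mean-free; this is exactly why the $L^2$ term was built into the gradient-flow metric.
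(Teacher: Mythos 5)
Your proof is correct and follows essentially the same route as the paper: exploit the linearized constraint to kill the linear-in-$\delta$ term, telescope to get $D_h(\vy_h^N) \le \veps_0 + \sum_n \|\nabla_h\delta\vy_h^{n+1}\|_{L^2}^2$, bound the gradient via \eqref{eqn:bound_noBC} (correctly observing that the full norm $\|\cdot\|_{H_h^2(\Omega)}$ is needed since the increments are not mean-free), and invoke \eqref{e:control_final_energy}. One small remark on the last step: for the bound $E_h(\mathcal I_h^k\vy^0)\lesssim\|\vy^0\|_{H^2(\Omega)}^2$ the paper argues directly from the stability estimate \eqref{eqn:Eh<=H2} together with a trace inequality and local $H^2$-stability of the interpolant, rather than going through the strong-convergence machinery of Theorem~\ref{lim-sup}; your detour through that theorem only gives $E_h(\vy_h^0)\lesssim 1$ with an implicit constant depending on $\vy^0$, and the explicit $\|\vy^0\|_{H^2(\Omega)}^2$ scaling really comes from the stability argument you mention at the end, so it is cleaner to invoke that directly.
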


\begin{proof}
The argument follows verbatim that of \cite[Lemma 3.4]{bonito2018} and is therefore only sketched. We take advantage of the linearized metric constraint $L_T(\vy_h^{n};\delta\vy_h^{n+1})=0$, encoded in \eqref{gf:constraint} for $\vv_h=\delta\vy_h^{n+1}$, to obtain for all $n\ge 0$
$$
(\nabla_h\vy_h^{n+1})^T\nabla_h\vy_h^{n+1}-g = (\nabla_h\vy_h^{n})^T\nabla_h\vy_h^{n}-g + (\nabla_h \delta \vy_h^{n+1})^T\nabla_h \delta \vy_h^{n+1}.
$$
Therefore, summing for $n=0,...,N-1$ and exploiting telescopic cancellation yield
\[
D_h(\vy_h^N)= \sum_{\K\in\Th}\left|\int_{\K} (\nabla\vy_h^N)^T\nabla\vy_h^N-g\right| \leq  D_h(\vy_h^0) + \sum_{n=0}^{N-1}  \| \nabla_h \delta \vy_h^{n+1} \|_{L^2(\Omega)}^2.
\]
Since $\vy_h^0 \in \A_{h,\veps_0}^k$, using \eqref{eqn:bound_noBC} of Lemma \ref{estimate-smoothing-interpolation} (discrete Poincar\'e-Friedrichs inequalities) implies
\begin{equation*}
D_h(\vy_h^N) \leq 
\veps_0+c\sum_{n=0}^{N-1}\|\delta\vy_h^{n+1}\|_{H_h^2(\Omega)}^2.
\end{equation*}
To derive \eqref{eqn:control_defect}, we employ \eqref{e:control_final_energy} along with $E_h(\cdot) \geq 0$ and realize that
\begin{equation}\label{e:decay_increment}
\sum_{n=0}^{N-1}\|\delta\vy_h^{n+1}\|_{H_h^2(\Omega)}^2 \leq \tau E_h(\vy_h^0).
\end{equation}

On the other hand, if $\vy^0\in\A$, then \eqref{eqn:Eh<=H2} in conjunction with a trace inequality, and the local $H^2$-stability of the Lagrange interpolant imply
\begin{equation} \label{e:bound_E_interp}
E_h(\mathcal I_h^k \vy^0) \lesssim \| \mathcal I_h^k \vy^0 \|_{H^2_h(\Omega)}^2 \lesssim  \| \vy^0 \|_{H^2(\Omega)}^2.
\end{equation}
This, together with Lemma \ref{lemma:Ah_not_empty} ($\A_{h,\veps}^k$ is non-empty), gives the desired estimate \eqref{e:uniform_defect}.
\end{proof}

The control on the prestrain defect offered by Proposition~\ref{prop:control-defect} indicates that $\veps_0$ should tend to zero as $h\to0$. 
We discuss in Section~\ref{sec:preprocessing} the delicate construction of initial deformations $\vy_h^0 \in \mathbb A_{h,\veps_0}^k$, so that $\veps_0 \to 0$ as $h\to0$.

We pointed out in Remark~\ref{kernel} that the bilinear form $a_h(\cdot,\cdot)$ has a nontrivial kernel in $\mathcal{F}_h(\vy_h^n)$ containing the constant vectors and yet the variational problem \eqref{gf:system} uniquely determines the increment $\delta \vy_h^{n+1}$.
  This is reflected in Theorem~\ref{conv_global_min} (convergence of global minimizers) where the sequence $\wb \vy_h:= \vy_h - \strokedint_{\Omega} \vy_h$, rather than $\vy_h$, is precompact. We explore next that the gradient flow preserves deformation averages throughout the evolution.

\begin{prop}[evolution of averages]\label{ave}
Let $\vy_{h}^0 \in [\V_h^k]^3$. Then all the iterates $\vy_h^n$, $n\geq 1$, of the gradient flow \eqref{gf:variation} satisfy 
\begin{equation}\label{e:avg+cons}
\int_{\Omega}\vy_{h }^n=\int_{\Omega}\vy_{h }^0.
\end{equation}
\end{prop}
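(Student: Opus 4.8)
The claim is that the gradient flow preserves the average $\int_\Omega \vy_h^n$. The natural strategy is to show that the increment $\delta\vy_h^{n+1}$ has zero average, i.e. $\int_\Omega \delta\vy_h^{n+1} = 0$, from which $\int_\Omega \vy_h^{n+1} = \int_\Omega \vy_h^n$ follows immediately and then \eqref{e:avg+cons} follows by induction on $n$. So the whole task reduces to proving $\int_\Omega \delta\vy_h^{n+1} = 0$.

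\textbf{Key steps.} First I would test the variational identity \eqref{gf:system} with a carefully chosen $\vv_h$. The obstruction is that one cannot simply take $\vv_h$ to be an arbitrary constant vector $\vc$, because the test space in \eqref{gf:system} is restricted to $\mathcal{F}_h(\vy_h^n)$; so I must check that constant vectors lie in $\mathcal{F}_h(\vy_h^n)$. Looking at the definition \eqref{gf:constraint}, for $\vv_h = \vc$ constant we have $\nabla_h \vv_h = 0$, hence $L_\K(\vy_h^n;\vc) = \int_\K \nabla\vc^T\nabla\vy_h^n + (\nabla\vy_h^n)^T\nabla\vc = 0$ for every $\K$, so indeed $\vc \in \mathcal{F}_h(\vy_h^n)$. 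Next, with $\vv_h = \vc$ in \eqref{gf:system}: the right-hand side $a_h(\vy_h^n,\vc)$ vanishes because $a_h$ depends on its second argument only through $H_h(v_{h,m})$, $\jump{\nabla_h \vv_h}$ and $\jump{\vv_h}$, all of which are zero for a constant vector (the Hessian of a constant is zero, and a constant has no jumps). Similarly $a_h(\delta\vy_h^{n+1},\vc) = 0$ for the same reason. Thus \eqref{gf:system} collapses to $\tau^{-1}(\delta\vy_h^{n+1},\vc)_{H_h^2(\Omega)} = 0$ for every constant vector $\vc \in \mathbb{R}^3$. Finally, by the definition \eqref{e:H2metric} of the $H_h^2(\Omega)$ scalar product, $(\delta\vy_h^{n+1},\vc)_{H_h^2(\Omega)} = \langle \delta\vy_h^{n+1},\vc\rangle_{H_h^2(\Omega)} + (\delta\vy_h^{n+1},\vc)_{L^2(\Omega)}$, and the first term vanishes (it only involves broken Hessians and jumps of $\vc$, all zero), so we are left with $(\delta\vy_h^{n+1},\vc)_{L^2(\Omega)} = 0$ for all $\vc \in \mathbb{R}^3$, i.e. $\int_\Omega \delta\vy_h^{n+1} = 0$.

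\textbf{Main obstacle.} There is no serious obstacle here; this is a short computation. The only point that requires a moment's care is the bookkeeping that every term of $a_h(\cdot,\vc)$ and of $\langle \cdot,\vc\rangle_{H_h^2(\Omega)}$ genuinely vanishes for constant $\vc$ — this rests on the elementary facts that $D_h^2 \vc = 0$ and that $\jump{\vc} = 0$, $\jump{\nabla_h\vc} = 0$ on the skeleton, together with the fact (noted above) that $\vc \in \mathcal{F}_h(\vy_h^n)$ so that $\vc$ is a legitimate test function. Once $\int_\Omega \delta\vy_h^{n+1} = 0$ is established, an immediate induction on $n$ starting from $\vy_h^0$ closes the proof of \eqref{e:avg+cons}.
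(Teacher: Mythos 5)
Your proposal is correct and follows essentially the same route as the paper: test the variational identity with a constant vector $\vc$ (which lies in $\mathcal{F}_h(\vy_h^n)$ since $\nabla\vc=0$), note that all $a_h$- and $\langle\cdot,\cdot\rangle_{H_h^2(\Omega)}$-terms vanish for constants, and conclude $(\delta\vy_h^{n+1},\vc)_{L^2(\Omega)}=0$, hence the averages are preserved. The paper's proof is just a terser version of your computation.
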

\begin{proof}
It suffices to choose a constant test function $\vv_h=\mathbf{c}\in\mathcal{F}_h(\vy_h^n)$ in \eqref{gf:variation} to obtain
\begin{equation*}
(\delta\vy_{h }^{n+1},\mathbf{c})_{L^2(\Omega)}= 0,
\end{equation*}
whence \eqref{e:avg+cons} follows immediately.
\end{proof}

In particular, Proposition \ref{ave} implies that $\strokedint_{\Omega}\vy_h^n=\mathbf{0}$ for all the iterates if $\strokedint_{\Omega} \vy_h^0=\mathbf{0}$. The latter can easily be achieved by subtracting $\strokedint_{\Omega} \vy_h^0$ from any initial guess $\vy_h^0$ without affecting $E_h(\vy_h^0)$ or $D_h(\vy_h^0)$. In this case, the sequence $\{\vy_h^N\}_{h>0}$ of outputs of the gradient flow \eqref{gf:variation} satisfies the assumption in Theorem \ref{lim-inf} and is precompact without further shifting.

% absolute
Energy decreasing gradient flow algorithms are generally not guaranteed to converge to global minimizers.
We address this issue next upon showing that the gradient flow \eqref{gf:variation} reaches a local minimum $\vy_h^\infty$ for $E_h$ in the direction of the tangent plane $\mathcal{F}_h(\vy_h^{\infty})$. This requires, however, the following (possibly degenerate) {\it inf-sup condition}:
for all $n\ge 0$, there exists a constant $\beta_h>0$ independent of $n$ but possibly depending on $h$ such that
\begin{equation} \label{eqn:inf_sup}
\newinf_{\mu_h\in\Lambda_h\vphantom{\vv_h\in [\V_h^k]^3}}\sup_{\vv_h\in [\V_h^k]^3\vphantom{\mu\in\Lambda_h}}\frac{b_h(\vy_h^n;\vv_h,\mu_h)}{\|\vv_h\|_{H_h^2(\Omega)}\|\mu_h\|_{L^2(\Omega)}} \ge \beta_h,
\end{equation}
where $\Lambda_h:=\{ \mu_h \in [\V_h^0]^{2\times 2}: \, \mu_h^T = \mu_h \}$ is the set of Lagrange multipliers and the bilinear form $b_h(\vy_h^n;\cdot,\cdot)$ is defined for any $(\vv_h,\mu_h)\in [\V_h^k]^3\times\Lambda_h$ by
\begin{equation*}
b_h(\vy_h^n;\vv_h,\mu_h):=\sum_{\K\in\Th}\int_{\K}\mu_h:\big(\nabla \vv_h^T\nabla\vy_h^n+(\nabla\vy_h^n)^T\nabla\vv_h\big).
\end{equation*}
The proof of \eqref{eqn:inf_sup} is an open problem, but experiments presented in \cite{BGNY2020_comp} suggest its validity. We notice the mismatch between the function spaces $H_h^2(\Omega)$ and $L^2(\Omega)$, which are natural for $E_h$ but not for $b_h$, and the fact that $\vy_h^n$ is not known to belong to $[W^1_\infty(\Omega)]^3$ uniformly. We stress that an inf-sup condition similar to \eqref{eqn:inf_sup} is valid for an LDG scheme for bilayer plates provided the linearized metric constraint $L_T(\vy_h,\vv_h)=0$ of \eqref{gf:constraint} is enforced pointwise \cite{BNS:21}.

\begin{prop}[limit of gradient flow] \label{prop:local_min}
		Fix $h>0$ and assume that the inf-sup condition \eqref{eqn:inf_sup} holds . Let $\vy_h^0\in\A_{h,\varepsilon_0}$ be such that $E_h(\vy_h^0)<\infty$ and let $\{\vy_h^n\}_{n\ge 1}\subset\A_{h,\veps}^k$ be the sequence produced by the discrete gradient flow \eqref{gf:variation}.		
		Then there exists  $\vy_h^{\infty}\in\A_{h,\veps}^k$ such that (up to a subsequence) $\vy_h^n \rightarrow\vy_h^{\infty}$ as $n\rightarrow\infty$ and $\vy_h^{\infty}$ is a local minimum for $E_h$ in the direction $\mathcal{F}_h(\vy_h^{\infty})$, namely
		\begin{equation} \label{eqn:loc_min_in_F}
		E_h(\vy_h^{\infty}) \leq E_h(\vy_h^{\infty}+\vv_h) \qquad \forall\, \vv_h\in \mathcal{F}_h(\vy_h^{\infty}).
		\end{equation}
\end{prop}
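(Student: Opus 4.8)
The strategy is to extract a convergent subsequence from the energy-decreasing flow, identify the limit, and then upgrade the Euler–Lagrange identity in the limit to the local minimality statement. First I would use the summed energy estimate \eqref{e:control_final_energy}, namely $\tau^{-1}\sum_{n\ge 0}\|\delta\vy_h^{n+1}\|_{H_h^2(\Omega)}^2 + E_h(\vy_h^N) \le E_h(\vy_h^0) < \infty$, to conclude that $\|\delta\vy_h^{n+1}\|_{H_h^2(\Omega)} \to 0$ as $n\to\infty$, and that $E_h(\vy_h^n)$ is nonincreasing hence convergent. Since $h$ is fixed, $[\V_h^k]^3$ is finite dimensional; combining $E_h(\vy_h^n)\le E_h(\vy_h^0)$ with Theorem~\ref{coercivity:plates} (coercivity of $E_h$), Lemma~\ref{bound-nabla} (gradient estimate, valid since $\vy_h^n\in\A_{h,\veps}^k$) and the Poincar\'e–Friedrichs inequality \eqref{eqn:fp-smoothing}, together with Proposition~\ref{ave} allowing us to assume $\strokedint_\Omega \vy_h^n = \bz$, yields $\|\vy_h^n\|_{H_h^2(\Omega)}\lesssim 1$. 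Hence a subsequence converges (in every norm, by finite dimensionality) to some $\vy_h^\infty \in [\V_h^k]^3$, and since $\A_{h,\veps}^k$ is closed (it is a preimage of a closed set under the continuous map $D_h$) we get $\vy_h^\infty \in \A_{h,\veps}^k$.

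Next I would pass to the limit in the optimality system. The iterate $\delta\vy_h^{n+1}$ solves \eqref{gf:variation} with the constraint $\delta\vy_h^{n+1}\in\mathcal F_h(\vy_h^n)$; introducing Lagrange multipliers $\mu_h^{n+1}\in\Lambda_h$ (whose existence and uniform-in-$n$ bound is precisely what the inf-sup condition \eqref{eqn:inf_sup} provides, since the right-hand side $a_h(\vy_h^n,\cdot)$ is bounded by $|\vy_h^n|_{H_h^2(\Omega)}\lesssim 1$ and $\|\delta\vy_h^{n+1}\|_{H_h^2(\Omega)}\to 0$), we obtain, for all $\vv_h\in[\V_h^k]^3$,
\begin{equation*}
\tau^{-1}(\delta\vy_h^{n+1},\vv_h)_{H_h^2(\Omega)} + a_h(\vy_h^{n+1},\vv_h) + b_h(\vy_h^{n+1};\vv_h,\mu_h^{n+1}) = 0,
\end{equation*}
up to an error from replacing $\vy_h^n$ by $\vy_h^{n+1}$ inside $b_h$, which is $O(\|\delta\vy_h^{n+1}\|)$. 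Along the convergent subsequence, $\mu_h^{n+1}$ is bounded in the finite-dimensional space $\Lambda_h$, so (refining again) $\mu_h^{n+1}\to\mu_h^\infty$; letting $n\to\infty$ and using $\delta\vy_h^{n+1}\to\bz$ gives $a_h(\vy_h^\infty,\vv_h) + b_h(\vy_h^\infty;\vv_h,\mu_h^\infty)=0$ for all $\vv_h$. Restricting to $\vv_h\in\mathcal F_h(\vy_h^\infty)$, the $b_h$ term vanishes by definition of $\mathcal F_h$, so $a_h(\vy_h^\infty,\vv_h)=0$ for all $\vv_h\in\mathcal F_h(\vy_h^\infty)$.

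Finally, I would convert this stationarity into local minimality. Since $E_h$ is a nonnegative quadratic form with $a_h = \delta E_h$, for any $\vv_h\in\mathcal F_h(\vy_h^\infty)$ one has the exact Taylor expansion
\begin{equation*}
E_h(\vy_h^\infty+\vv_h) = E_h(\vy_h^\infty) + a_h(\vy_h^\infty,\vv_h) + \tfrac12 a_h(\vv_h,\vv_h) = E_h(\vy_h^\infty) + \tfrac12 a_h(\vv_h,\vv_h) \ge E_h(\vy_h^\infty),
\end{equation*}
using $a_h(\vy_h^\infty,\vv_h)=0$ and $a_h(\vv_h,\vv_h)\ge 2E_h(\vv_h)\ge 0$ (indeed $a_h(\vw_h,\vw_h) = 2E_h(\vw_h)$ by the quadratic structure). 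This is exactly \eqref{eqn:loc_min_in_F}. The main obstacle is the second paragraph: obtaining a limiting multiplier and passing to the limit in the constraint requires the inf-sup condition \eqref{eqn:inf_sup} to control $\mu_h^{n+1}$ uniformly in $n$ and to ensure the error terms from the $\vy_h^n$-versus-$\vy_h^\infty$ mismatch in $b_h$ vanish; one must also check that $\mathcal F_h(\vy_h^\infty)$ is the "right" limit of $\mathcal F_h(\vy_h^n)$, which follows from continuity of $\vy_h\mapsto L_T(\vy_h;\cdot)$ but needs to be stated carefully since the tangent spaces vary with $n$.
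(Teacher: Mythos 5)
Your proposal is correct and follows essentially the same route as the paper: extract a convergent subsequence via energy decay, coercivity, the gradient estimate and conservation of averages in the finite-dimensional setting; use the inf-sup condition to introduce uniformly bounded Lagrange multipliers and pass to the limit to get $a_h(\vy_h^\infty,\vv_h)=0$ on $\mathcal F_h(\vy_h^\infty)$; conclude by the quadratic/convex structure of $E_h$. The only (harmless) deviation is rewriting the saddle-point equation with $b_h(\vy_h^{n+1};\cdot,\cdot)$ plus an error term, whereas the paper simply keeps $b_h(\vy_h^n;\cdot,\cdot)$ and lets $\vy_h^n\to\vy_h^\infty$, which also makes your final worry about "convergence of tangent spaces" unnecessary.
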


\begin{proof}
Thanks to the energy decay property \eqref{eqn:control_energy} and the average conservation property \eqref{e:avg+cons}, we have that $\sup_{n\ge 1}E_h(\vy_h^n)\le E_h(\vy_h^0)<\infty$ and $\strokedint_{\Omega} \vy_h^n =\strokedint_{\Omega} \vy_h^0$ for all $n\ge 1$. Arguing as in the proof of Proposition \ref{prop:existence_yh} (existence of discrete solutions), we deduce that a subsequence (not relabeled) converges to some $\vy_h^{\infty}\in\A_{h,\veps}^k$ in any norm defined on $[\V_h^k]^3$.

It remains to prove \eqref{eqn:loc_min_in_F}.
Since $E_h$ is quadratic and convex,
we infer that
\begin{equation*}
E_h(\vy_h^{\infty}+\vv_h) \ge E_h(\vy_h^{\infty}) + \delta E_h(\vy_h^{\infty})(\vv_h).
\end{equation*}
Hence, to prove \eqref{eqn:loc_min_in_F}, it suffices to  show that
\begin{equation} \label{eqn:loc_min_in_F_equiv}
a_h(\vy_h^\infty,\vv_h) = \delta E_h(\vy_h^{\infty})(\vv_h) = 0 \qquad \forall\, \vv_h\in\mathcal{F}_h(\vy_h^{\infty}).
\end{equation}
To this end, we take advantage of the inf-sup condition \eqref{eqn:inf_sup} to guarantee the existence of a unique $\lambda_h^{n+1}   \in \Lambda_h$ such that $\delta \vy_h^{n+1}$ in \eqref{gf:variation} satisfies
 \begin{equation}\label{e:saddle}
\tau^{-1}(\delta\vy_h^{n+1},\vv_h)_{H_h^2(\Omega)} +  a_h(\vy_h^{n+1},\vv_h) +b_h(\vy_h^n;\vv_h,\lambda_h^{n+1})  = 0 \qquad \forall\, \vv_h  \in [\V_h^k]^3.
\end{equation}
Now, from the estimate \eqref{e:decay_increment} on the increments $\delta \vy_h^n$, we deduce that
$\lim_{n\to \infty} \delta \vy_h^n = 0$
and so taking the limit as $n\to \infty$ in \eqref{e:saddle} yields
\begin{equation} \label{limit_bhn_step1}
\lim_{n\rightarrow\infty}b_h(\vy_h^n;\vv_h,\lambda_h^{n+1})=-a_h(\vy_h^{\infty},\vv_h) \qquad \forall\, \vv_h\in [\V_h^k]^3.
\end{equation}
This in conjunction with the inf-sup condition \eqref{eqn:inf_sup} yields
\begin{equation*}
\sup_{n\ge0}\|\lambda_h^{n+1}\|_{L^2(\Omega)}\leq \frac{1}{\beta_h}\sup_{n\ge0\vphantom{\vv_h\in [\V_h^k]^3}}\sup_{\vv_h\in [\V_h^k]^3}\frac{b_h(\vy_h^n;\vv_h,\lambda_h^{n+1})}{\|\vv_h\|_{H_h^2(\Omega)}}<\infty.
\end{equation*}
This in turn implies the existence of  $\lambda_h^{\infty}\in\Lambda_h$ such that (up to a subsequence) $\lambda_h^n \to \lambda_h^\infty$ in any norm defined on the finite dimensional space $\Lambda_h$. In particular, \eqref{limit_bhn_step1} becomes
\begin{equation*}
\sum_{\K\in\Th}\lambda_h^{\infty}|_T: L_T(\vy_h^\infty,\vv_h) =   
\sum_{\K\in\Th}\int_{\K}\lambda_h^{\infty}:(\nabla \vv_h^T\nabla\vy_h^{\infty}+(\nabla\vy_h^{\infty})^T\nabla\vv_h) = -a_h(\vy_h^{\infty},\vv_h)
\end{equation*}
for all $\vv_h\in [\V_h^k]^3$. In particular, if $\vv_h\in\mathcal{F}_h(\vy_h^{\infty})$, then $L_T(\vy_h^\infty,\vv_h)=0$ for all $T\in\Th$ according to \eqref{gf:constraint}. This implies \eqref{eqn:loc_min_in_F_equiv} and ends the proof.
\end{proof}

%%%%%%%%%%%%%%%%%%%%%%%%%%%%%%%%%%%%%%%%%%%%%%%%%%%%%%%%%%%%%%%%%%%%%%%%%%%
\section{Preprocessing: initial data preparation}\label{sec:preprocessing}
%%%%%%%%%%%%%%%%%%%%%%%%%%%%%%%%%%%%%%%%%%%%%%%%%%%%%%%%%%%%%%%%%%%%%%%%%%%

Propositions ~\ref{prop:energy-decay} (energy decay) and~\ref{prop:control-defect} (control of metric defect) guarantee that the gradient flow \eqref{gf:system} constructs iterates $\vy_h^n$ with decreasing energy $E_h(\vy_h^n)$ (as long as the increments do not vanish) and with prestrain defect $D_h(\vy_h^n)$ smaller than $\veps= D_h(\vy_h^0) + C E_h(\vy_h^0) \tau$. Since $\tau=\mathcal{O}(h)$ in practice, we realize that the choice of the initial deformation $\vy_h^0 \in [\V_h^k]^3$ dictates the size of $\veps$, which must satisfy $\veps \to 0$ as $h\to 0$ for Theorem~\ref{conv_global_min} (convergence of global minimizers) to hold.
The assumption $\A \not = \emptyset$ along with Lemma \ref{lemma:Ah_not_empty} ($\A_{h,\veps}^k$ is non-empty) implies that such $\vy_h^0$'s exist.
Yet, their construction is a delicate issue, especially when $g \not = I_2$ as in the present study. This is the objective of this section.

Motivated by the numerical experiments presented in \cite{BGNY2020_comp}, we propose a \textit{metric preprocessing} algorithm consisting of a discrete $H^2$-gradient flow for the preprocessing energy
\begin{equation} \label{def:E_prestrain_PP}
E_h^p(\vy_h):= E^s_h(\vy_h)+\sigma_h E^b_h(\vy_h),
\end{equation}
where
\begin{equation} \label{def:E_prestrain_S}
E^s_h(\vy_h)  :=  \frac{1}{2}\int_{\Omega} \left|\nabla_h\vy_h^T\nabla_h\vy_h-g \right|^2
\end{equation}
is a (simplified) discrete stretching energy and
\begin{equation} \label{def:E_prestrain_B}
E^b_h(\vy_h)  :=  \frac{1}{2} \left(\int_{\Omega}|g^{-\frac{1}{2}}H_h(\vy_h)g^{-\frac{1}{2}}|^2+\|\h^{-\frac12}\jump{\nabla_h\vy_h}\|_{L^2(\Gh^0)}^2+\|\h^{-\frac32}\jump{\vy_h}\|_{L^2(\Gh^0)}^2\right)
\end{equation}
denotes (part of) the discrete bending energy, and $\sigma_h\ge 0$ is a (small) parameter which may depend on $h$.
Note that the stretching energy $E_h^s$ controls the prestrain defect 
\begin{equation}\label{eqn:defect-Es}
D_h(\vy_h)\le\|(\nabla_h\vy_h)^T\nabla_h\vy_h-g\|_{L^1(\Omega)}\lesssim\|(\nabla_h\vy_h)^T\nabla_h\vy_h-g\|_{L^2(\Omega)}\approx E^s_h(\vy_h)^{\frac{1}{2}}
\end{equation}
for all $\vy_h\in[\V_h^k]^3$, while the bending energy $E_h^b$ controls the $H^2_h(\Omega)$ semi-norm in view of Lemma~\ref{coercivity:hessian}.
As we shall see, for a pseudo time-step sufficiently small, the preprocessing gradient flow produces sequences of deformations $\{ \vy_h^n \}_{n\ge 0}$  
so that $\{{E_h^p}(\vy_h^n)\}_{n\ge 0}$ is decreasing as long as the increments do not vanish. Therefore, for any $\sigma_h\ge 0$ we have $D_h(\vy_h^n)\lesssim E_h^p(\vy_h^n)^{\frac12}$ and if $\sigma_h>0$ and the $N$-th iterate $\vy_h^N$ satisfies $E_h^p(\vy_h^N)\lesssim \sigma_h$, then both $D_h(\vy_h^N) \lesssim \sigma_h^{\frac12}$ and $E_h(\vy_h^N) \lesssim E_h^b(\vy_h^N)\lesssim 1$ for the (full) bending energy \eqref{e:Eh_intro}. The total energy $E_h^p$ is inspired by the pre-asymptotic model reduction of \cite{BGNY2020_comp} as well as the methodology of augmented Lagrangian \cite{fortin2000}.
We recall that $\sigma_h$ scales like the square of the (three-dimensional) plate thickness and tends to $0$  \cite[equations (18) and (19)]{BGNY2020_comp}, which motivates the choice $\sigma_h\approx h^2$. 
In practice, however, the contribution of $E_h^b$ to $E_h^p$ makes negligible difference in computations,
and the numerical experiments of \cite{BGNY2020_comp} are done with $\sigma_h = 0$.  

Furthermore, to cope with the non-quadratic nature of the stretching energy $E_h^s$, the gradient flow is linearized at the previous iterate  and reads: Starting from any initial guess $\vy_h^0\in[\V_h^k]^3$, compute recursively $\vy^{n+1}_h:=\vy^n_h+\delta \vy_h^{n+1}$ where $\delta \vy_h^{n+1}\in [\V_h^k]^3$ satisfies
\begin{equation}\label{pre-gf:variation2}
\begin{split}
\tau^{-1}(\delta\vy_h^{n+1},\vv_h)_{H_h^2(\Omega)}&+a_h^s(\vy_h^n;\delta\vy_h^{n+1},\vv_h)+\sigma_h a_h^b(\delta\vy_h^{n+1},\vv_h)\\
&  =
-a_h^s(\vy_h^n; \vy^n_h,\vv_h)- \sigma_h a_h^b(\vy^n_h,\vv_h) \qquad \forall\, \vv_h\in[\V_h^k]^3.
\end{split}\end{equation}
Here, $(\cdot,\cdot)_{H^2_h(\Omega)}$ is defined in \eqref{e:H2metric} and
\begin{align}
a_h^s(\vy_h^n;\vy_h,\vv_h)  &:= \int_{\Omega}\big(\nabla_h\vv_h^T\nabla_h\vy_h+\nabla_h\vy_h^T\nabla_h\vv_h\big):\big((\nabla_h\vy_h^n)^T\nabla_h\vy_h^n-g\big) \label{pre-gf:bilinear} \\
a_h^b(\vy_h,\vv_h) &:= \int_{\Omega}\big(g^{-\frac{1}{2}}H_h(\vy_h)g^{-\frac{1}{2}}\big):\big(g^{-\frac{1}{2}}H_h(\vv_h)g^{-\frac{1}{2}}\big) \nonumber \\
&+ \big(\h^{-1}\jump{\nabla_h\vy_h},\jump{\nabla_h\vv_h}\big)_{L^2(\Gh^0)}+\big(\h^{-3}\jump{\vy_h},\jump{\vv_h}\big)_{L^2(\Gh^0)}, \label{pre-gf:bilinear_B}
\end{align}
and we use $\tau$ to denote a pseudo time-step parameter. Note that to avoid an overload of symbols, we used the same notation as for the pseudo time-step in the main gradient flow. However, these two pseudo time-steps do not need to take the same value.

We start by showing that the variational problem \eqref{pre-gf:variation2} has a unique solution. 
In preparation, we note that the following discrete Sobolev inequality holds
\begin{equation}\label{eqn:discrete-sobolev}
\|v_h\|_{L^4(\Omega)}\lesssim \|\nabla_hv_h\|_{L^2(\Omega)}+\|\h^{-\frac12}\jump{v_h}\|_{L^2(\Gh^0)}+\|v_h\|_{L^2(\Omega)} \qquad \forall\, v_h\in\mathbb{E}(\Th).
\end{equation}
To see this, we resort to the smoothing interpolation operator $\Pi_h$ (see Section \ref{sec:discrete}) and invoke  an inverse inequality and a standard Sobolev inequality for $\Pi_hv_h\in H^1(\Omega)$ to deduce
\begin{equation*}
\|v_h\|_{L^4(\Omega)}\lesssim\|v_h-\Pi_hv_h\|_{L^4(\Omega)}+\|\Pi_hv_h\|_{L^4(\Omega)} \lesssim \|\h^{-1}(v_h-\Pi_hv_h)\|_{L^2(\Omega)}+\|\Pi_hv_h\|_{H^1(\Omega)}.
\end{equation*}
The claim follows from the stability and approximability estimate \eqref{eqn:smooth_interp_H1} of $\Pi_h$. For later use, we record that the discrete Poincar\'e-Friedrichs inequality \eqref{eqn:bound_noBC} and the discrete Sobolev inequality \eqref{eqn:discrete-sobolev} applied to each component of $\nabla_h \vv_h \in [\mathbb{E}(\Th)]^{3\times 2}$ for $\vv_h \in [\V_h^k]^3$ yield
\begin{equation}\label{eqn:discrete-sobolev-grad}
\|\nabla_h \vv_h\|_{L^4(\Omega)}\lesssim \| \vv_h \|_{H^2_h(\Omega)}.
\end{equation}
 
The next proposition concerns the form $a_h^s(\cdot;\cdot,\cdot)$ and is key to guarantee that the hypotheses of the Lax-Milgram Lemma are satisfied.

\begin{lemma}[solvability of \eqref{pre-gf:variation2}]\label{pre-coercivity}
There exists a constant $C_p$ independent of $h$ such that 
\begin{equation} \label{eqn:continuity_sh}
|a_h^s(\vz_h;\vv_h,\vw_h)|\le C_p E^s_h(\vz_h)^{\frac{1}{2}}\|\vv_h\|_{H_h^2(\Omega)}\|\vw_h\|_{H_h^2(\Omega)} \qquad \forall\, \vv_h,\vw_h, \vz_h \in [\V_h^k]^3,
\end{equation}
Moreover, for $\vz_h\in[\V_h^k]^3$ and $\tau$ satisfying 
\begin{equation}\label{e:cond_step_pp}
\tau\leq \big(1+C_p  {E_h^s}(\vz_h)^{\frac{1}{2}}\big)^{-1},
\end{equation}
we have
\begin{equation} \label{eqn:coercivity_sh}
\|\vv_h\|_{H_h^2(\Omega)}^2\le \frac{1}{\tau}(\vv_h,\vv_h)_{H_h^2(\Omega)}+a_h^s(\vz_h;\vv_h,\vv_h) \qquad \forall\, \vv_h \in[\V_h^k]^3.
\end{equation}
Consequently, there exists a unique solution to \eqref{pre-gf:variation2} provided 
$\tau$ satisfies \eqref{e:cond_step_pp} with $\vz_h = \vy_h^n$.
\end{lemma}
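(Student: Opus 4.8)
The plan is to verify the three assertions in turn: first the continuity bound \eqref{eqn:continuity_sh}, then the coercivity inequality \eqref{eqn:coercivity_sh}, and finally existence and uniqueness via the Lax--Milgram lemma applied to the bilinear form on the left of \eqref{pre-gf:variation2}. The one substantive ingredient is the discrete Sobolev inequality \eqref{eqn:discrete-sobolev-grad}; everything else reduces to Cauchy--Schwarz, Hölder, and the definitions \eqref{pre-gf:bilinear}--\eqref{pre-gf:bilinear_B}.

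For the continuity estimate I would first observe that $a^s_h(\vz_h;\cdot,\cdot)$ is symmetric in its last two arguments, since the factor $\nabla_h\vw_h^T\nabla_h\vv_h+\nabla_h\vv_h^T\nabla_h\vw_h$ in \eqref{pre-gf:bilinear} is unchanged under $\vv_h\leftrightarrow\vw_h$. Using the Cauchy--Schwarz inequality in the Frobenius inner product, the submultiplicativity $|\nabla_h\vv_h^T\nabla_h\vw_h|\le|\nabla_h\vv_h|\,|\nabla_h\vw_h|$, and then Hölder's inequality with exponents $(4,4,2)$, I get
\[
|a^s_h(\vz_h;\vv_h,\vw_h)| \lesssim \|\nabla_h\vv_h\|_{L^4(\Omega)}\,\|\nabla_h\vw_h\|_{L^4(\Omega)}\,\big\|(\nabla_h\vz_h)^T\nabla_h\vz_h-g\big\|_{L^2(\Omega)}.
\]
By the definition \eqref{def:E_prestrain_S} of $E^s_h$ the last factor equals $\sqrt 2\,E^s_h(\vz_h)^{1/2}$, while \eqref{eqn:discrete-sobolev-grad} bounds $\|\nabla_h\vv_h\|_{L^4(\Omega)}\lesssim\|\vv_h\|_{H_h^2(\Omega)}$ and likewise for $\vw_h$, with constants independent of $h$. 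Collecting these gives \eqref{eqn:continuity_sh} with $C_p$ depending only on the hidden constant in \eqref{eqn:discrete-sobolev-grad}.

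Next I would derive coercivity by taking $\vw_h=\vv_h$ in \eqref{eqn:continuity_sh}, so that $|a^s_h(\vz_h;\vv_h,\vv_h)|\le C_p E^s_h(\vz_h)^{1/2}\|\vv_h\|_{H_h^2(\Omega)}^2$, and then using $(\vv_h,\vv_h)_{H_h^2(\Omega)}=\|\vv_h\|_{H_h^2(\Omega)}^2$ from \eqref{e:H2metric},
\[
\tfrac1\tau(\vv_h,\vv_h)_{H_h^2(\Omega)}+a^s_h(\vz_h;\vv_h,\vv_h)\ \ge\ \big(\tfrac1\tau-C_p E^s_h(\vz_h)^{1/2}\big)\|\vv_h\|_{H_h^2(\Omega)}^2\ \ge\ \|\vv_h\|_{H_h^2(\Omega)}^2,
\]
the last inequality being exactly the step restriction \eqref{e:cond_step_pp}, i.e. $\tau^{-1}\ge 1+C_pE^s_h(\vz_h)^{1/2}$. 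This proves \eqref{eqn:coercivity_sh}. For the last claim I would set $\vz_h=\vy_h^n$ and consider the bilinear form $\mathcal A(\vv_h,\vw_h):=\tau^{-1}(\vv_h,\vw_h)_{H_h^2(\Omega)}+a^s_h(\vy_h^n;\vv_h,\vw_h)+\sigma_h a^b_h(\vv_h,\vw_h)$ and the linear functional $\ell(\vw_h):=-a^s_h(\vy_h^n;\vy_h^n,\vw_h)-\sigma_h a^b_h(\vy_h^n,\vw_h)$ on $[\V^k_h]^3$. Since $a^b_h(\vv_h,\vv_h)\ge0$ (a sum of squared $L^2$ norms by \eqref{pre-gf:bilinear_B}), \eqref{eqn:coercivity_sh} with $\vz_h=\vy_h^n$ gives $\mathcal A(\vv_h,\vv_h)\ge\|\vv_h\|_{H_h^2(\Omega)}^2$, and continuity of $\mathcal A$ and $\ell$ on the finite-dimensional space $[\V^k_h]^3$ follows from \eqref{eqn:continuity_sh}, Cauchy--Schwarz and Lemma~\ref{L2bound-lifting}. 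Lax--Milgram then yields a unique $\delta\vy_h^{n+1}\in[\V^k_h]^3$ solving \eqref{pre-gf:variation2}.

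I expect the continuity estimate to be the only place requiring care: $a^s_h$ couples three discrete functions through products of broken gradients, so a plain $L^2$ Cauchy--Schwarz would leave $\|\nabla_h\vv_h\,\nabla_h\vw_h\|_{L^2(\Omega)}$, which is not controlled by the $H_h^2(\Omega)$ norms individually. Splitting instead into two $L^4(\Omega)$ factors and invoking the discrete Sobolev embedding \eqref{eqn:discrete-sobolev-grad} is precisely what makes the bound hold with an $h$-independent constant; once \eqref{eqn:continuity_sh} is established, the coercivity and Lax--Milgram steps are routine.
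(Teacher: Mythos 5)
Your proof is correct and follows essentially the same route as the paper: Frobenius Cauchy--Schwarz plus Hölder with exponents $(4,4,2)$ to obtain the $L^4$--$L^4$--$L^2$ split, the discrete Sobolev inequality \eqref{eqn:discrete-sobolev-grad} to absorb the $L^4$ factors into $\|\cdot\|_{H_h^2(\Omega)}$, and then Lax--Milgram with $(\cdot,\cdot)_{H_h^2(\Omega)}$ providing coercivity. The only superficial difference is that you spell out the Lax--Milgram data $(\mathcal A,\ell)$ and use $a_h^b\ge 0$ directly, whereas the paper also cites Lemma~\ref{coercivity:hessian}; both handle $a_h^b$ equivalently.
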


\begin{proof}
Let $\vv_h,\vw_h, \vz_h \in[\V_h^k]^3$ and note that
\begin{equation}\label{e:continuity_int}
|a_h^s(\vz_h;\vv_h,\vw_h)|\le 2\sqrt{2} \|\nabla_h \vv_h\|_{L^4(\Omega)}\|\nabla_h\vw_h\|_{L^4(\Omega)} E_h^s(\vz_h)^{\frac12}.
\end{equation} 
Then \eqref{eqn:continuity_sh} follows from the discrete Sobolev inequality \eqref{eqn:discrete-sobolev-grad}. 
The estimate \eqref{eqn:coercivity_sh} results from taking $\vw_h=\vv_h$ in \eqref{eqn:continuity_sh} and the pseudo time-step restriction \eqref{e:cond_step_pp}.
Thanks to \eqref{eqn:continuity_sh}, \eqref{eqn:coercivity_sh} and Lemma~\ref{coercivity:hessian} (discrete $H^2$ semi-norm equivalence), the Lax-Milgram theory applies to guarantee the existence and uniqueness of a solution to \eqref{pre-gf:variation2}.
\end{proof}

The main result of this section is next. 
It shows that the preprocessing energy $E_h^p(\vy_h^{n+1})$ of the deformation $\vy_h^{n+1}$ obtained after one step of the linearized gradient flow \eqref{pre-gf:variation2} is smaller than the energy $E_h^p(\vy_h^{n})$ of the previous iterate provided that the pseudo time-step satisfies $\tau \leq \frac12c_h(\vy_h^n)$, where
\begin{equation}\label{eq:c_n}
c_h(\vy_h^n) := \min\left\{\left(1+C_pE^p_h(\vy_h^{n})^{\frac12}\right)^{-1}, d_h(\vy_h^n)^{-1} \right\}
\end{equation}
with
\begin{equation}\label{eq:d_n}
d_h(\vy_h^n) := \frac{C_p}{2}E^p_h(\vy_h^{n})^{\frac12}+\frac{\wt C_p}{2}\Big(h_{\min}^{-1}\big(E^p_h(\vy_h^{n})+1\big)\big(E^p_h(\vy_h^{n})^{\frac12}+\|g\|_{L^1(\Omega)}\big)+\sigma_hE^p_h(\vy_h^{n})\Big),
\end{equation}
where $h_{\min}:= \min_{T \in \Th} h_T$ and $\wt C_p$ is a constant independent of $n$ and $h$ (to be determined in Proposition~\ref{p:energy_pp}).
While the above restriction on $\tau$ depends on $E^p_h(\vy_h^n)$, we show in the subsequent Corollary~\ref{c:unif_decay} that $c_h(\vy_h^n) \geq c$ for a constant $c$ independent of $h$ and $n$. 

\begin{prop}[energy decay for prestrain preprocessing]\label{energy-decay-preprocessing}\label{p:energy_pp}
Let $\sigma_h\ge 0$. Let $\vy_h^n \in[\V_h^k]^3$ and assume that $\tau \leq \frac 1 2 c_h(\vy_h^n)$ where $c_h(\vy_h^n)$ is defined in \eqref{eq:c_n}.
If $\delta\vy_h^{n+1} \in[\V_h^k]^3$ is the unique solution to \eqref{pre-gf:variation2}, then the new iterate $\vy_h^{n+1}:= \vy_h^n + \delta \vy_h^{n+1}$ satisfies
\begin{equation}\label{eqn:energy-estimate-one}
{E_h^p}(\vy_h^{n+1}) +  \frac 1{2\tau}\|\delta\vy_h^{n+1}\|_{H^2_h(\Omega)}^2 \leq {E_h^p}(\vy_h^n).
\end{equation}
\end{prop}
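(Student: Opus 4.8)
The strategy is to mimic the proof of Proposition~\ref{prop:energy-decay} (energy decay for the main flow), but now accounting for the non-quadratic nature of $E_h^s$: there will be a "good" quadratic part producing the energy difference plus $\frac12\|\delta\vy_h^{n+1}\|_{H^2_h}^2$ terms, and an "error" cubic/quartic remainder that must be absorbed using the pseudo time-step restriction $\tau\leq\frac12 c_h(\vy_h^n)$. First I would test \eqref{pre-gf:variation2} with $\vv_h = \delta\vy_h^{n+1}$ to obtain the identity
\[
\tau^{-1}\|\delta\vy_h^{n+1}\|_{H_h^2(\Omega)}^2 + a_h^s(\vy_h^n;\delta\vy_h^{n+1},\delta\vy_h^{n+1}) + a_h^s(\vy_h^n;\vy_h^n,\delta\vy_h^{n+1}) + \sigma_h a_h^b(\vy_h^{n+1},\delta\vy_h^{n+1}) = 0,
\]
using bilinearity to merge the two $a_h^s$-terms on the right into $a_h^s(\vy_h^n;\vy_h^{n+1},\delta\vy_h^{n+1})$. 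The $\sigma_h a_h^b$ part is quadratic, so the identity $a(a-b)=\frac12 a^2-\frac12 b^2+\frac12(a-b)^2$ handles it exactly as in Proposition~\ref{prop:energy-decay}, yielding $\sigma_h a_h^b(\vy_h^{n+1},\delta\vy_h^{n+1}) \geq \sigma_h E_h^b(\vy_h^{n+1}) - \sigma_h E_h^b(\vy_h^n)$ (up to the $\frac12$ on the increment, which is nonnegative and can be dropped or kept).

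The heart of the matter is the stretching term. Writing $\vy_h^{n+1}=\vy_h^n+\delta\vy_h^{n+1}$ and expanding $\nabla_h\vy_h^{n+1T}\nabla_h\vy_h^{n+1}-g = (\nabla_h\vy_h^{nT}\nabla_h\vy_h^n-g) + (\nabla_h\delta\vy_h^{n+1T}\nabla_h\vy_h^n + \nabla_h\vy_h^{nT}\nabla_h\delta\vy_h^{n+1}) + \nabla_h\delta\vy_h^{n+1T}\nabla_h\delta\vy_h^{n+1}$, one recognizes the middle bracket as the quantity appearing inside $a_h^s(\vy_h^n;\delta\vy_h^{n+1},\cdot)$ paired against the first bracket. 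The plan is to establish the algebraic inequality
\[
a_h^s(\vy_h^n;\vy_h^{n+1},\delta\vy_h^{n+1}) \;\geq\; E_h^s(\vy_h^{n+1}) - E_h^s(\vy_h^n) - (\text{remainder}),
\]
where the remainder collects the cross terms involving $\nabla_h\delta\vy_h^{n+1T}\nabla_h\delta\vy_h^{n+1}$; these are estimated using Cauchy--Schwarz, the discrete Sobolev inequality \eqref{eqn:discrete-sobolev-grad}, and the bounds $E_h^s(\vy_h^n)^{1/2}\lesssim E_h^p(\vy_h^n)^{1/2}$, $\|\nabla_h\vy_h^n\|_{L^2(\Omega)}^2 \lesssim E_h^s(\vy_h^n)^{1/2}+\|g\|_{L^1(\Omega)}$ (in the spirit of Lemma~\ref{bound-nabla}), and an inverse inequality to pull out the $h_{\min}^{-1}$ factor in $d_h(\vy_h^n)$. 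This is exactly where the constant $\wt C_p$ and the precise form of $d_h(\vy_h^n)$ in \eqref{eq:d_n} come from. The remainder should be bounded by $d_h(\vy_h^n)\,\|\delta\vy_h^{n+1}\|_{H_h^2(\Omega)}^2$ plus possibly a lower-order $C_p E_h^p(\vy_h^n)^{1/2}\|\delta\vy_h^{n+1}\|_{H_h^2(\Omega)}^2$ piece matching the first term of $d_h$.

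Combining everything, the test identity becomes
\[
\tau^{-1}\|\delta\vy_h^{n+1}\|_{H_h^2(\Omega)}^2 + a_h^s(\vy_h^n;\delta\vy_h^{n+1},\delta\vy_h^{n+1}) + E_h^s(\vy_h^{n+1}) - E_h^s(\vy_h^n) + \sigma_h E_h^b(\vy_h^{n+1}) - \sigma_h E_h^b(\vy_h^n) \;\leq\; d_h(\vy_h^n)\|\delta\vy_h^{n+1}\|_{H_h^2(\Omega)}^2.
\]
For the term $a_h^s(\vy_h^n;\delta\vy_h^{n+1},\delta\vy_h^{n+1})$ I would invoke \eqref{eqn:continuity_sh} to get $|a_h^s(\vy_h^n;\delta\vy_h^{n+1},\delta\vy_h^{n+1})| \leq C_p E_h^s(\vy_h^n)^{1/2}\|\delta\vy_h^{n+1}\|_{H_h^2(\Omega)}^2 \leq C_p E_h^p(\vy_h^n)^{1/2}\|\delta\vy_h^{n+1}\|_{H_h^2(\Omega)}^2$, which together with $d_h$ contributes at most $\bigl(\tfrac12 C_p E_h^p(\vy_h^n)^{1/2} + d_h(\vy_h^n)\bigr)$-type coefficients; the definitions \eqref{eq:c_n}--\eqref{eq:d_n} are rigged precisely so that $\tau\leq\frac12 c_h(\vy_h^n)$ forces $\tau^{-1} - C_pE_h^p(\vy_h^n)^{1/2} - d_h(\vy_h^n) \geq \tau^{-1} - \tfrac1\tau = \ldots \geq \frac1{2\tau}$ after using both $c_h$ alternatives. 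Rearranging yields \eqref{eqn:energy-estimate-one}. The main obstacle is bookkeeping the stretching remainder correctly: getting the constants to line up with \eqref{eq:d_n} requires care with the inverse inequality (hence $h_{\min}^{-1}$), with splitting $\|\nabla_h\delta\vy_h^{n+1}\|_{L^4}^2$ via $\|\cdot\|_{H^2_h}$, and with absorbing the $\|\nabla_h\vy_h^n\|$ contributions via $E_h^s(\vy_h^n)^{1/2}+\|g\|_{L^1(\Omega)}$ — everything else is a direct transcription of the Proposition~\ref{prop:energy-decay} argument.
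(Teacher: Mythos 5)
Your plan follows the paper's proof essentially verbatim: test \eqref{pre-gf:variation2} with $\vv_h=\delta\vy_h^{n+1}$, use the quadratic identities to rewrite $a_h^s(\vy_h^n;\vy_h^{n+1},\delta\vy_h^{n+1})$ as $E_h^s(\vy_h^{n+1})-E_h^s(\vy_h^n)$ plus a remainder $\tfrac12\|W_h^n\|_{L^2(\Omega)}^2-\tfrac12 a_h^s(\vy_h^n;\delta\vy_h^{n+1},\delta\vy_h^{n+1})$ with $W_h^n=(\nabla_h\vy_h^{n+1})^T\nabla_h\vy_h^{n+1}-(\nabla_h\vy_h^{n})^T\nabla_h\vy_h^{n}$, bound that remainder by $d_h(\vy_h^n)\|\delta\vy_h^{n+1}\|_{H_h^2(\Omega)}^2$ via \eqref{eqn:continuity_sh}, the discrete Sobolev inequality \eqref{eqn:discrete-sobolev-grad}, the inverse inequality and the gradient bound, and conclude from $d_h(\vy_h^n)\le c_h(\vy_h^n)^{-1}\le(2\tau)^{-1}$, exactly as in the paper. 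The one detail you leave implicit is that the quartic-in-increment part of $\|W_h^n\|_{L^2(\Omega)}^2$ is absorbed by first deriving the a priori bound $\|\delta\vy_h^{n+1}\|_{H_h^2(\Omega)}\lesssim E_h^p(\vy_h^n)^{\frac12}\big(\|\nabla_h\vy_h^n\|_{L^4(\Omega)}+\sigma_h^{\frac12}\big)$ from the equation together with the coercivity \eqref{eqn:coercivity_sh}, which is precisely what produces the $\big(E_h^p(\vy_h^n)+1\big)$ and $\sigma_h E_h^p(\vy_h^n)$ factors in \eqref{eq:d_n}.
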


\begin{proof}
Because $\tau \leq \frac 1 2 c_h(\vy_h^n)$ and $E_h^s(\vy_h^n)\leq E_h^p(\vy_h^n)$, $\tau$ satisfies the assumption \eqref{e:cond_step_pp} of Lemma~\ref{pre-coercivity} (solvability of \eqref{pre-gf:variation2}) and thus there exists a unique solution $\delta\vy_h^{n+1}\in[\V^k_h]^3$ to \eqref{pre-gf:variation2}.
Next we take $\vv_h = \delta \vy_h^{n+1}$ in \eqref{pre-gf:variation2} to obtain 
\begin{equation}\label{energy-eqn-0}
\tau^{-1}\|\delta\vy_h^{n+1}\|^2_{H_h^2(\Omega)}+a^s_h(\vy_h^n;\vy_h^{n+1},\delta\vy_h^{n+1})+\sigma_ha^b_h(\vy_h^{n+1},\delta\vy_h^{n+1})=0
\end{equation}
 and proceed in several steps. In contrast to Proposition \ref{prop:energy-decay} (energy decay), the main difficulty is that $a^s_h$ is quadratic in its first argument.\\
\smallskip\noindent
{\it Step 1: Energy relation.}
Since $a^s_h(\vy_h^n;\cdot,\cdot)$ is bilinear and symmetric,
arguing as in \eqref{rel_bilinear} yields
\begin{equation}\label{bilinear-identity}
a^s_h(\vy_h^n;\vy_h^{n+1},\delta\vy_h^{n+1})=\frac{1}{2}a^s_h(\vy_h^n;\vy_h^{n+1},\vy_h^{n+1})-\frac{1}{2}a^s_h(\vy_h^n;\vy_h^n,\vy_h^n)+\frac{1}{2}a^s_h(\vy_h^n;\delta\vy_h^{n+1},\delta\vy_h^{n+1}).
\end{equation}
Furthermore, using the identity $(a-b)b=\frac{1}{2}a^2-\frac{1}{2}b^2-\frac{1}{2}(a-b)^2$, we have
\begin{align*}
\frac{1}{2}a^s_h(\vy_h^n;\vy_h^{n+1},\vy_h^{n+1})-\frac{1}{2}a^s_h(\vy_h^n;\vy_h^n,\vy_h^n)
& = \int_{\Omega} W_h^n: \big((\nabla_h\vy_h^{n})^T\nabla_h\vy_h^{n}-g \big) \\
& = E_h^s(\vy_h^{n+1}) - E_h^s(\vy_h^n) - \frac 1 2 \| W_h^n \|_{L^2(\Omega)}^2,
\end{align*}
where 
\begin{align}\label{e:Wh}
W_h^n&:=(\nabla_h\vy_h^{n+1})^T\nabla_h\vy_h^{n+1}-(\nabla_h\vy_h^{n})^T\nabla_h\vy_h^{n}.
\end{align}
Therefore, we are able to express $a^s_h(\vy_h^n;\vy_h^{n+1},\delta \vy_h^{n+1})$ in terms of energies as
\begin{equation*}
a^s_h(\vy_h^n;\vy_h^{n+1},\delta \vy_h^{n+1})=E^s_h(\vy_h^{n+1})-E^s_h(\vy_h^{n})+\frac 1 2a^s_h(\vy_h^n;\delta \vy_h^{n+1},\delta \vy_h^{n+1}) -\frac 1 2 \|W_h^n\|_{L^2(\Omega)}^2.
\end{equation*}
Similarly, noting that $E_h^b(\vv_h)=\frac12a^b_h(\vv_h,\vv_h)$ for any $\vv_h\in[\V_h^k]^3$ is quadratic, we obtain
$$
a^b_h(\vy_h^{n+1},\delta\vy_h^{n+1})=E_h^b(\vy_h^{n+1}) - E_h^b(\vy_h^{n}) + \frac 1 2 a_h^b(\delta \vy_h^{n+1},\delta \vy_h^{n+1})\ge E_h^b(\vy_h^{n+1}) - E_h^b(\vy_h^{n}).
$$
Using these two relations in \eqref{energy-eqn-0}, we arrive at
\begin{equation}\label{energy-decay-exact}
  {E_h^p}(\vy_h^{n+1}) - E^p_h(\vy_h^{n}) + \tau^{-1}\|\delta\vy_h^{n+1}\|^2_{H_h^2(\Omega)}
  \le R^n_h,
\end{equation}
where
\begin{equation}\label{e:Rh}
R^n_h:=\frac{1}{2}\|W_h^n\|_{L^2(\Omega)}^2-\frac{1}{2}a^s_h(\vy_h^n;\delta\vy_h^{n+1},\delta\vy_h^{n+1}).
\end{equation}
\smallskip\noindent
{\it Step 2: Bounds for $R^n_h$.}
We now prove the estimate
\begin{equation}\label{eqn:Rnh}
|R^n_h|\le d_h(\vy_h^n) \|\delta\vy_h^{n+1}\|^2_{H_h^2(\Omega)}
\end{equation}
with $d_h(\vy_h^n)$ defined in \eqref{eq:d_n}. We first apply the continuity property \eqref{eqn:continuity_sh} of $a_h^s$ to get
\begin{equation}\label{estimate-a}
|a^s_h(\vy_h^n;\delta\vy_h^{n+1},\delta\vy_h^{n+1})|\le C_p {E_h^p}(\vy_h^{n})^{\frac12}\|\delta\vy_h^{n+1}\|_{H_h^2(\Omega)}^2.
\end{equation}
Then we note that $W_h^n$ can be equivalently written as
$$
W_h^n = (\nabla_h\delta\vy_h^{n+1})^T\nabla_h\vy_h^{n}+(\nabla_h\vy_h^{n})^T\nabla_h\delta\vy_h^{n+1}+(\nabla_h\delta\vy_h^{n+1})^T\nabla_h\delta\vy_h^{n+1}
$$
whence, resorting to
the discrete Sobolev inequality \eqref{eqn:discrete-sobolev-grad}, we obtain
\begin{equation}\label{bound1_for_Whn}
\|W_h^n\|_{L^2(\Omega)}^2 \lesssim \big(\|\nabla_h\vy_h^n\|_{L^4(\Omega)}^2 + \|\delta\vy_h^{n+1}\|_{H_h^2(\Omega)}^2 \big) \|\delta\vy_h^{n+1}\|_{H_h^2(\Omega)}^2.
\end{equation}
To derive \eqref{eqn:Rnh} we estimate $\|\delta\vy_h^{n+1}\|_{H_h^2(\Omega)}^2$ in terms of $\|\nabla_h\vy_h^n\|_{L^4(\Omega)}^2$. To this end, we note that $\tau \leq \frac12c_h(\vy_h^n) < (1+C_p E_h^s(\vy_h^n)^{\frac12})^{-1}$ and apply the coercivity estimate \eqref{eqn:coercivity_sh} together with the positivity of $a_h^b(\cdot,\cdot)$ and the gradient flow equation \eqref{pre-gf:variation2} satisfied by $\delta \vy_h^{n+1}$ to derive
\begin{align*}
\|\delta\vy_h^{n+1}\|^2_{H_h^2(\Omega)}&\le\tau^{-1}\|\delta\vy_h^{n+1}\|^2_{H_h^2(\Omega)}+a^s_h(\vy_h^n;\delta\vy_h^{n+1},\delta\vy_h^{n+1})+\sigma_ha^b_h(\delta\vy_h^{n+1},\delta\vy_h^{n+1}) \\
&=-a^s_h(\vy_h^n;\vy_h^n,\delta\vy_h^{n+1})-\sigma_ha^b_h(\vy_h^n,\delta\vy_h^{n+1}).
\end{align*}
The $H^2$ semi-norm equivalence estimates \eqref{eqn:Eh<=H2} and \eqref{eqn:ineq_h2} show that $E_h^b(\cdot) \sim a_h^b(\cdot,\cdot) \sim | \cdot |^2_{H^2_h(\Omega)}$ on $[\V_h^k]^3$.
Hence, from the  continuity property \eqref{e:continuity_int} of $a_h^s$ and \eqref{eqn:discrete-sobolev-grad}, we infer that 
  \begin{equation*}
\|\delta\vy_h^{n+1}\|_{H_h^2(\Omega)} \lesssim E^p_h(\vy_h^{n})^{\frac12}\big(\|\nabla_h\vy_h^n\|_{L^4(\Omega)}+\sigma_h^{\frac12}\big).
\end{equation*}
Inserting this into \eqref{bound1_for_Whn} gives
\[
\|W_h^n\|_{L^2(\Omega)}^2\lesssim \Big( \big( 1 + E_h^p(\vy_h^n) \big) \|\nabla_h\vy^n_h\|_{L^4(\Omega)}^2 + \sigma_h E_h^p(\vy_h^n) \Big)
\|\delta\vy_h^{n+1}\|^2_{H_h^2(\Omega)}.
\]
We tackle $\|\nabla_h\vy^n_h\|_{L^4(\Omega)}$ via the inverse inequality $\|\nabla_h\vy^n_h\|_{L^4(\Omega)}^2 \lesssim h_{\min}^{-1}\|\nabla_h\vy^n_h\|_{L^2(\Omega)}^2$ and
\begin{equation*}
	\|\nabla_h\vy_h^n\|_{L^2(\Omega)}^2\lesssim E^s_h(\vy_h^n)^{\frac{1}{2}}+\|g\|_{L^1(\Omega)}\lesssim E^p_h(\vy_h^n)^{\frac{1}{2}}+\|g\|_{L^1(\Omega)},
	\end{equation*}
a relation directly following from \eqref{eqn:transpose-estimate}. Altogether, we get
\begin{equation*}
\|W_h^n\|_{L^2(\Omega)}^2\le \wt C_ p\Big(h_{\min}^{-1}\big(E^p_h(\vy_h^{n})+1\big)\big(E^p_h(\vy_h^{n})^{\frac12}+\|g\|_{L^1(\Omega)}\big)+ \sigma_h E^p_h(\vy_h^{n})\Big)\|\delta\vy_h^{n+1}\|^2_{H_h^2(\Omega)}
\end{equation*}
for some constant $\wt C_p$ independent of $n$ and $h$. 
This together with \eqref{estimate-a} yields \eqref{eqn:Rnh}.

\medskip\noindent
{\it Step 3: Conditional energy decay.}
Substituting \eqref{eqn:Rnh} into  \eqref{energy-decay-exact} we observe that
\begin{equation}\label{energy-decay-2}
{E_h^p}(\vy_h^{n+1}) + \big(\tau^{-1}-d_h(\vy_h^n)\big)\|\delta\vy_h^{n+1}\|^2_{H_h^2(\Omega)} \le E^p_h(\vy_h^{n}).
\end{equation}
The desired energy decay is obtained upon realizing that $d_h(\vy_h^n)\leq c_h(\vy_h^n)^{-1}\le(2\tau)^{-1}$, which follows from the definition \eqref{eq:c_n} of $c_h(\vy_h^n)$ and the assumption on $\tau$.
\end{proof}

\begin{cor}[uniform energy decay]\label{c:unif_decay}
Let $\vy_h^0\in[\V_h^k]^3$ and assume that $\tau \leq \frac 1 2 c_h(\vy_h^0)$. 
The sequence of successive iterates $\vy_h^{n+1}:=\vy_h^n+\delta\vy_h^{n+1}$, $n\geq 0$, where $\delta\vy_h^{n+1}\in[\V_h^k]^3$ satisfies \eqref{pre-gf:variation2}, is well defined. Furthermore, the sequence $\{ c_h(\vy_h^n) \}_{n\ge 0}$ is nondecreasing and there holds
\begin{equation} \label{eqn:decay_E_tilde}
{E_h^p}(\vy_h^{N+1})+\frac{1}{2\tau}\sum_{n=0}^N\|\delta\vy_h^{n+1}\|_{H^2_h(\Omega)}^2\le E_h^p(\vy_h^{0}) \qquad\forall \, N\ge 0.
\end{equation} 
\end{cor}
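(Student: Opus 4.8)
The plan is to run a single induction on $n$ that simultaneously yields well-definedness of the whole sequence, the energy decay, and the monotonicity of the thresholds $c_h(\vy_h^n)$. The engine is Proposition~\ref{p:energy_pp} (energy decay for prestrain preprocessing), and the key observation that makes the induction close is that the time-step threshold $c_h(\cdot)$ depends \emph{monotonically} on the preprocessing energy, so that a bound imposed on $\tau$ only at $n=0$ is inherited at every subsequent step.

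First I would record this monotonicity. Expanding \eqref{eq:d_n} shows that $d_h(\vy_h^n)$ is a polynomial of degree three in $E_h^p(\vy_h^n)^{1/2}$ with nonnegative coefficients (recall $\sigma_h\ge0$, $\|g\|_{L^1(\Omega)}\ge0$ and $h_{\min}^{-1}>0$), hence a nondecreasing function of $E_h^p(\vy_h^n)\ge0$; likewise $s\mapsto(1+C_ps^{1/2})^{-1}$ is nonincreasing on $[0,\infty)$. In view of \eqref{eq:c_n}, $c_h$ is therefore a nonincreasing function of the preprocessing energy: if $E_h^p(\vw_h)\le E_h^p(\vz_h)$ then $c_h(\vw_h)\ge c_h(\vz_h)$, with the convention $d_h^{-1}=+\infty$ when $E_h^p=0$.

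Next comes the induction itself. For $n=0$ the hypothesis $\tau\le\frac12 c_h(\vy_h^0)$ is exactly the assumption of Proposition~\ref{p:energy_pp}, so $\delta\vy_h^1$ is the unique solution of \eqref{pre-gf:variation2}, $\vy_h^1:=\vy_h^0+\delta\vy_h^1$ is well defined, and \eqref{eqn:energy-estimate-one} gives
\begin{equation*}
E_h^p(\vy_h^1)+\frac{1}{2\tau}\|\delta\vy_h^1\|_{H_h^2(\Omega)}^2\le E_h^p(\vy_h^0);
\end{equation*}
in particular $E_h^p(\vy_h^1)\le E_h^p(\vy_h^0)$, hence $c_h(\vy_h^1)\ge c_h(\vy_h^0)\ge2\tau$ by the monotonicity above. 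Assume now that $\vy_h^0,\dots,\vy_h^n$ are well defined with $E_h^p(\vy_h^0)\ge\cdots\ge E_h^p(\vy_h^n)$, so that $c_h(\vy_h^n)\ge c_h(\vy_h^0)\ge2\tau$, i.e. $\tau\le\frac12 c_h(\vy_h^n)$. Proposition~\ref{p:energy_pp} then delivers the unique increment $\delta\vy_h^{n+1}$, the well-defined iterate $\vy_h^{n+1}=\vy_h^n+\delta\vy_h^{n+1}$, and the bound $E_h^p(\vy_h^{n+1})+\frac{1}{2\tau}\|\delta\vy_h^{n+1}\|_{H_h^2(\Omega)}^2\le E_h^p(\vy_h^n)$, whence $E_h^p(\vy_h^{n+1})\le E_h^p(\vy_h^n)$ and $c_h(\vy_h^{n+1})\ge c_h(\vy_h^n)$. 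This closes the induction, showing both that the sequence is well defined and that $\{c_h(\vy_h^n)\}_{n\ge0}$ is nondecreasing.

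Finally, summing \eqref{eqn:energy-estimate-one} over $n=0,1,\dots,N$ and telescoping the energy contributions (and using $E_h^p\ge0$) gives
\begin{equation*}
E_h^p(\vy_h^{N+1})+\frac{1}{2\tau}\sum_{n=0}^N\|\delta\vy_h^{n+1}\|_{H_h^2(\Omega)}^2\le E_h^p(\vy_h^0),
\end{equation*}
which is \eqref{eqn:decay_E_tilde}. I do not anticipate a genuine obstacle here: the only point needing care is the monotone dependence of $c_h$ on $E_h^p$, which is precisely what allows the time-step restriction, prescribed a priori only through $\vy_h^0$, to survive along the flow; everything else is a routine telescoping argument plus repeated application of Proposition~\ref{p:energy_pp}.
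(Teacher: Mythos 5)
Your proposal is correct and follows essentially the same route as the paper: an induction driven by Proposition~\ref{p:energy_pp}, using the fact that $c_h$ in \eqref{eq:c_n} increases as $E_h^p$ decreases so that the time-step restriction imposed only at $n=0$ propagates, followed by summation of \eqref{eqn:energy-estimate-one}. Your explicit verification that $d_h$ is a nondecreasing function of $E_h^p$ (a cubic in $E_h^p{}^{1/2}$ with nonnegative coefficients) merely spells out what the paper states more briefly, so there is no substantive difference.
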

\begin{proof}
We proceed by induction to prove that for every $n\ge 0$, $\delta \vy_h^{n+1}$ is well defined and $c_h(\vy_h^{n+1}) \geq c_h(\vy_h^n)$.
We start with $n=0$. In that case, by assumption $\tau \leq\frac12 c_h(\vy_h^{0})$ and Lemma~\ref{pre-coercivity} and Proposition~\ref{p:energy_pp} guarantee that $\delta \vy_h^1\in[\V_h^k]^3$ is  well defined and
$$
E_h^p(\vy_h^1) \leq E_h^p(\vy_h^0).
$$
From the expression \eqref{eq:c_n} of $c_h(\vy_h^n)$, which increases as $E_h^p(\vy_h^n)$ decreases, we also deduce that $c_h(\vy_h^1) \geq c_h(\vy_h^0)$.
For the induction step, we assume that $\{\delta \vy_h^{j}\}_{j=1}^{n}$ is well defined and $c_h(\vy_h^{j}) \geq c_h(\vy_h^{j-1})$, $j=1,..,n$,
which implies $\tau \leq \frac 1 2 c_h(\vy_h^{0}) \leq \frac 1 2 c_h(\vy_h^{n})$.
Therefore, Lemma~\ref{pre-coercivity} (solvability of \eqref{pre-gf:variation2}) and Proposition~\ref{p:energy_pp} (energy decay for prestrain processing) again guarantee that  $\delta \vy_h^{n+1} \in [\V_h^k]^3$ is  well defined and
$$
E_h^p(\vy_h^{n+1}) \leq E_h^p(\vy_h^{n}) \qquad \Longrightarrow  \qquad c_h(\vy_h^{n+1}) \geq c_h(\vy_h^n).
$$
This is the desired property for $n+1$ and concludes the induction argument.

Finally, since the condition $\tau \leq \frac 1 2 c_h(\vy_h^{0}) \leq \frac 1 2 c_h(\vy_h^n)$ holds for all $n=1,\ldots,N$, \eqref{eqn:energy-estimate-one} is valid, whence
summing \eqref{eqn:energy-estimate-one} over $n$ yields~\eqref{eqn:decay_E_tilde}.
\end{proof}

We finish this section by relating the initial deformation $\vy_h^0$ for the main gradient flow \eqref{gf:variation} with the output of the preprocessing gradient flow \eqref{pre-gf:variation2}.

\begin{remark}[choice of $\sigma_h$] \label{rem:Eh_unif}
Under the assumptions of  Corollary~\ref{c:unif_decay} (uniform energy decay), the preprocessing gradient flow produces a sequence of deformations $\{\wt \vy_h^n\}_{n\ge 0}$ with decreasing preprocessing energy $E^p_h(\wt \vy_h^n)$. We assume that the $n_h$-th iterate of the preprocessing gradient flow, denoted $\wt \vy_h^{n_h}$, is such that
$$
E_h^p(\wt \vy_h^{n_h}) \lesssim \sigma_h.
$$
Since $\sigma_h$ scales like the square of the (three-dimensional) plate thickness, according to \eqref{def:E_prestrain_PP} and the pre-asymptotic analysis of \cite{BGNY2020_comp}, a natural choice is $\sigma_h\approx h^2$.
Regardless of this scaling,
in view of \eqref{eqn:defect-Es}, the prestrain defect of $\wt\vy_h^{n_h}$ satisfies
$$
D_h(\wt \vy_h^{n_h}) \lesssim E_h^s(\wt\vy_h^{n_h})^{\frac12} \lesssim \sigma_h^{\frac12}, \qquad \textrm{i.e.,} \qquad \wt \vy_h^{n_h} \in \A_{h,c\sigma_h^{\frac12}}^k,
$$
for a suitable constant $c>0$. Moreover, we  have
$$
E_h^b(\wt \vy_h^{n_h}) \lesssim \sigma_h^{-1}E_h^p(\wt \vy_h^{n_h}) \lesssim 1.
$$ 
This implies that $E_h(\wt \vy_h^{n_h})$ is also uniformly bounded, due to the continuity of $E_h$ and the coercivity of $E_h^b$. As a consequence, the main gradient flow \eqref{gf:variation} with initial deformation $\vy_h^0 = \wt \vy_h^{n_h}$ produces iterates $\vy_h^n$ satisfying $D_h(\vy_h^n)\lesssim \sigma_h^{\frac12}+\tau E_h(\vy_h^0)$ thanks to Proposition~\ref{prop:control-defect} (control of metric defect) and $E_h(\vy_h^n)\lesssim 1$ thanks to \eqref{e:control_final_energy}. In particular, if \eqref{gf:variation} leads to an almost global minimizer $\vy_h^{N_h}$ of the energy $E_h$, then the sequence $\{ \vy_h^{N_h}\}_{h>0}$ satisfies the uniform boundedness assumption of Theorem~\ref{conv_global_min} (convergence of global minimizers).
\end{remark}

%%%%%%%%%%%%%%%%%%%%%%%%%%%%%%%%%%%%%%%%%%%%%%%%%%%%%%%%%%%%%%%%%%%%%%%%%%%
\appendix
\section{Equivalence between energies \eqref{E:reduced-bending} and \eqref{def:Eg_D2y}}\label{a:alternate}
%%%%%%%%%%%%%%%%%%%%%%%%%%%%%%%%%%%%%%%%%%%%%%%%%%%%%%%%%%%%%%%%%%%%%%%%%%%

The following proposition, first shown in \cite[Proposition 1]{BGNY2020_comp}, justifies the replacement of the highly nonlinear reduced bending energy \eqref{E:reduced-bending} involving the second fundamental form $\II[\vy]$ of the deformation $\vy$ by the quadratic energy \eqref{def:Eg_D2y} involving the Hessian $D^2\vy$. This is critical for the design of the numerical scheme. We sketch the proof for completeness.

\begin{prop}[equivalence of \eqref{E:reduced-bending} and \eqref{def:Eg_D2y}] \label{prop:alt_energy}
If $g\in[H^1(\Omega)\cap L^{\infty}(\Omega)]^{2\times 2}$ is SPD a.e. in $\Omega$ and $\vy=(y_m)_{m=1}^3\in[H^2(\Omega)]^3$ satisfies $\nabla\vy^T\nabla\vy=g$ a.e. in $\Omega$, then there exist two non-negative functions $f_1,f_2 \in L^2(\Omega)$ depending only on $g$ and its partial derivatives such that 
\begin{equation} \label{eqn:link_part1}
\big|\g^{-\frac{1}{2}} \, \II[\vy] \, \g^{-\frac{1}{2}} \big|^2 = \sum_{m=1}^3\big|\g^{-\frac{1}{2}} \, D^2 y_m \, \g^{-\frac{1}{2}} \big|^2 + f_1 \qquad \mbox{a.e. in } \Omega,
\end{equation}
and
\begin{equation} \label{eqn:link_part2}
\tr \big(\g^{-\frac{1}{2}} \, \II[\vy] \, \g^{-\frac{1}{2}} \big)^2 = \sum_{m=1}^3\big|\tr \big(\g^{-\frac{1}{2}} \, D^2y_m \, \g^{-\frac{1}{2}} \big)\big|^2 + f_2 \qquad \mbox{a.e. in } \Omega.
\end{equation}	
\end{prop}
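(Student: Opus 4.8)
The plan is to exploit the orthonormal frame associated with the deformation $\vy$. Since $\nabla\vy^T\nabla\vy=g$ a.e., the vectors $\partial_1\vy,\partial_2\vy,\vnu$ (with $\vnu$ the unit normal) form a basis of $\mathbb{R}^3$ at a.e.\ point, and $\partial_i\vy\cdot\partial_j\vy=g_{ij}$. Writing $g^{-1/2}=(a_{ij})$, the rescaled frame $\be_k:=\sum_i a_{ik}\partial_i\vy$ together with $\vnu$ is orthonormal. The first key observation is that for each scalar component $y_m$, the Hessian $D^2 y_m$ admits a decomposition in this frame: the second derivatives $\partial_{ij}\vy$ decompose into a tangential part, governed by the Christoffel symbols of $g$ (hence depending only on $g$ and $\nabla g$), and a normal part, which is exactly the second fundamental form $\II[\vy]$. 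Concretely, $\partial_{ij}\vy=\Gamma_{ij}^k\partial_k\vy+\II[\vy]_{ij}\vnu$ with $\Gamma_{ij}^k$ the Christoffel symbols of $g$.

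The second step is to sum over the three components $m=1,2,3$. Because $\{\partial_1\vy,\partial_2\vy,\vnu\}$ spans $\mathbb{R}^3$, summing $|g^{-1/2}D^2 y_m g^{-1/2}|^2$ over $m$ amounts to taking the Frobenius norm of the full vector-valued object $g^{-1/2}D^2\vy\, g^{-1/2}$ and contracting the $\mathbb{R}^3$ index; this contraction turns the frame decomposition into a sum of the squared tangential contribution (a function of $g$, $\nabla g$ only, since $|\partial_k\vy|$ and their inner products are entries of $g$) and the squared normal contribution. The normal contribution is precisely $|g^{-1/2}\II[\vy]g^{-1/2}|^2$, because $|\vnu|=1$ and $\vnu$ is orthogonal to the tangent plane, so no cross terms survive. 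This yields \eqref{eqn:link_part1} with $f_1$ equal to (minus, after rearranging) the tangential term — but one must check $f_1\ge 0$, which follows because the identity reads $|g^{-1/2}\II g^{-1/2}|^2 = \sum_m|g^{-1/2}D^2y_m g^{-1/2}|^2 + f_1$ with $f_1$ itself a sum of squares once assembled correctly. The trace identity \eqref{eqn:link_part2} is handled the same way: $\tr(g^{-1/2}D^2 y_m g^{-1/2})=\Delta_g y_m$ (the Laplace–Beltrami-type contraction), and summing the squares over $m$ against the orthonormal frame again splits into a normal part equal to $\tr(g^{-1/2}\II[\vy]g^{-1/2})^2$ plus a nonnegative remainder $f_2$ built from $g,\nabla g$. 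Finally one notes $f_1,f_2\in L^2(\Omega)$ because they are polynomial expressions in the entries of $g$, $g^{-1}$, and $\nabla g$, and the hypotheses $g\in[H^1\cap L^\infty]^{2\times 2}$ together with the uniform positive-definiteness of $g$ make all these factors lie in $L^\infty$ or $L^2$ as needed.

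The main obstacle I anticipate is bookkeeping: carefully expanding $\partial_{ij}\vy$ in the (non-orthonormal) frame $\{\partial_k\vy,\vnu\}$, then conjugating by $g^{-1/2}$ on both sides and summing over the component index $m$ so that the cross terms between the tangential and normal parts genuinely cancel. The cancellation of cross terms is the crux: it relies on $\sum_m(\partial_k y_m)(\nu_m)=\partial_k\vy\cdot\vnu=0$, so it is structurally clean, but the indices (two derivative indices $i,j$, one Christoffel index $k$, one component index $m$, plus the two $g^{-1/2}$ conjugation indices) make it easy to slip. A secondary point to be careful about is verifying that the remainders $f_1,f_2$ are genuinely nonnegative — this is automatic if one organizes $f_1$ as $\sum_m|g^{-1/2}(\text{tangential part of }D^2y_m)g^{-1/2}|^2$ and likewise for $f_2$, rather than deriving it by subtraction. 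I would therefore set up the frame decomposition first, substitute, and only at the end identify the nonnegative leftover, citing \cite{BGNY2020_comp} for the detailed computation.
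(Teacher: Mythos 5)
Your argument is essentially the paper's own proof: decompose $\partial_{ij}\vy$ in the frame $\{\partial_1\vy,\partial_2\vy,\vnu\}$ via the Christoffel symbols of $g$, conjugate by $g^{-\frac12}$, and use the orthogonality of $\vnu$ to the tangent vectors so that the squared $\ell^2$-norm over the component index splits into the $\II[\vy]$ contribution plus a remainder depending only on $g$ and its derivatives, with the stated integrability following from $g\in[H^1\cap L^\infty]^{2\times2}$ and positive definiteness. The one wrinkle is the sign bookkeeping you yourself flagged: the identity the computation actually produces is $\sum_m\big|g^{-\frac12}D^2y_m\,g^{-\frac12}\big|^2=\big|g^{-\frac12}\II[\vy]g^{-\frac12}\big|^2+(\text{sum of squares built from } g,\nabla g)$, so the non-negative remainder naturally sits on the Hessian side (exactly as in the paper's displayed computation, which shares the same orientation mismatch with its statement); this has no bearing on the equivalence of the two energies, since the remainder is independent of $\vy$.
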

\begin{proof}
Since $g\in[H^1(\Omega)\cap L^{\infty}(\Omega)]^{2\times 2}$ and $\vy\in[H^2(\Omega)]^3$ with $\nabla\vy^T\nabla\vy=g$, we have
$$
|\nabla\vy|^2=\tr(g)\in L^{\infty}(\Omega) \qquad \textrm{and} \qquad \vnu=\frac{\partial_1\vy\times\partial_2\vy}{|\partial_1\vy\times\partial_2\vy|}=\frac{\partial_1\vy\times\partial_2\vy}{\sqrt{\det(g)}}.
$$
As a consequence, we deduce the regularity properties $\nabla\vy\in [H^1(\Omega)\cap L^{\infty}(\Omega)]^{3\times 2}$, $\vnu\in [H^1(\Omega)\cap L^{\infty}(\Omega)]^3$, and $\II[\vy]\in L^2(\Omega)^{2\times 2}$ . In addition, for $i,j\in\{1,2\}$, we represent $\partial_{ij}\vy$ in terms of the basis $\{\partial_1\vy,\partial_2\vy,\vnu\}$ of $\mathbb{R}^3$ and the Christoffel symbols $\Gamma_{ij}^l$ of the surface $\vy(\Omega)$ as follows:
\begin{equation*}
\partial_{ij}\vy = \sum_{l=1}^2\Gamma_{ij}^l \, \partial_l\vy+\II_{ij}[\vy] \, \vnu \qquad \mbox{a.e. in } \Omega;
\end{equation*}
we recall that the symbols $\Gamma_{ij}^l$ are intrinsic quantities that depend only on $g$ and its derivatives but not on $\vy$.
To prove \eqref{eqn:link_part1}, write $a=g^{\frac12}$ and note the validity of the expression
\begin{equation} \label{eqn:l2_decompo}
\Big(\big(a D^2y_k a \big)_{ij}\Big)_{k=1}^3 
= \big( a \II[\vy] a \big)_{ij}\vnu
+ \sum_{m,n=1}^2 a_{im} \Big( \sum_{l=1}^2 \Gamma_{mn}^l \partial_l \vy\Big) a_{nj}.
\end{equation}
Since $\vnu$ is orthogonal to $\{\partial_1\vy,\partial_2\vy\}$ and $\vnu^T\vnu=1$, taking the square of the $l^2$-norm on both sides of \eqref{eqn:l2_decompo} yields
\[
\sum_{k=1}^3 \big( a D^2 y_k a  \big)_{ij}^2 =
\big( a \II[\vy] a  \big)_{ij}^2 + f_{ij},
\]
where $f_{ij}$ does not depend explicitly on $\vy$ but only on $g$ and its derivatives. 
This concludes the proof of \eqref{eqn:link_part1}.
The proof of \eqref{eqn:link_part2} is similar.
\end{proof}

%%%%%%%%%%%%%%%%%%%%%%%%%%%%%%%%%%%%%%%%%%%%%%%%%%%%%%%%%%%%%%%%%%%%%%%%%%%
\section{Proofs of Lemma \ref{weak-conv} and Lemma \ref{strong-conv}} \label{sec:weak_strong}
%%%%%%%%%%%%%%%%%%%%%%%%%%%%%%%%%%%%%%%%%%%%%%%%%%%%%%%%%%%%%%%%%%%%%%%%%%%

We follow \cite{bonito2018}. We stress that the mesh assumptions of Lemma \ref{weak-conv} (weak convergence of $H_h$) are less restrictive and its proof is simpler than \cite[Proposition 4.3]{bonito2018} due to the simpler structure of the lifting operators $R_h$ and $B_h$ of \eqref{E:global-lifting}.

\begin{proof}[Proof of Lemma \ref{weak-conv}]
Let $\phi\in[C_0^{\infty}(\Omega)]^{2\times2}$. We integrate by parts twice to write
\begin{equation*}
\int_{\Omega}H_h(v_h):\phi = \int_{\Omega}D^2_h v_h:\phi-R_h(\jump{\nabla_h v_h}):\phi + B_h(\jump{v_h}):\phi = T_1+T_2+T_3+T_4+T_5,
\end{equation*}
with
\begin{gather*}
T_1 :=\int_{\Omega}v_h \di(\di\phi), \\
T_2 := -\int_{\Omega} R_h(\jump{\nabla_hv_h}):(\phi-\mathcal{I}_h\phi), \quad
T_3 := \int_{\Omega}B_h(\jump{v_h}):(\phi-\mathcal{I}_h\phi ), \\
T_4 := \sum_{e\in\Eh^0}\int_e \jump{\nabla_hv_h} \cdot \avrg{\phi-\mathcal{I}_h\phi}\vn_e,
\quad
T_5 := -\sum_{e\in\Eh^0}\int_e\jump{v_h}\avrg{\di(\phi-\mathcal{I}_h\phi)}\cdot\vn_e.
\end{gather*}
	Here, $\mathcal{I}_h\phi\in[\V^l_h\cap H^1_0(\Omega)]^{2\times2}$ denotes the Lagrange interpolant of $\phi$ of degree $\min\{l_1,l_2\}$, where $l_1$ and $l_2$ are the polynomial degrees of $R_h$ and $B_h$. 	
	We treat each term $T_i$ separately. By assumption $v_h\to v\in H^2(\Omega)$ in $[L^2(\Omega)]^3$ as $h\rightarrow 0$, whence we have
	\begin{equation*}
	T_1\rightarrow \int_{\Omega}v \di(\di\phi)=-\int_{\Omega}\nabla v \cdot \di\phi=\int_{\Omega}D^2v:\phi \quad \mbox{as } h\rightarrow 0.
	\end{equation*} 
	For $T_2$, we invoke  the assumed uniform boundedness  $|v_h|_{H_h^2(\Omega)} \leq C$ and Lemma \ref{L2bound-lifting} (stability of lifting operators) to get 
$$
	|T_2| \lesssim \|\h^{-\frac{1}{2}}\jump{\nabla_hv_h}\|_{L^2(\Gh^0)}\|\phi-\mathcal{I}_h\phi\|_{L^2(\Omega)} \le C\|\mathcal{I}_h\phi-\phi\|_{L^2(\Omega)} \rightarrow 0 \qquad \textrm{as }h\rightarrow 0.
$$
	Similarly, we have $T_3 \to 0$ as $h\rightarrow 0$.
	To estimate $T_4$, we start with a scaled trace inequality 
	\begin{equation}\label{traceineq}
	\|\mathcal{I}_h\phi-\phi\|_{L^2(e)}\lesssim \|\h^{-\frac{1}{2}}(\mathcal{I}_h\phi-\phi)\|_{L^2(\omega(e))}+\|\h^{\frac{1}{2}}\nabla(\mathcal{I}_h\phi-\phi)\|_{L^2(\omega_e)},
	\end{equation}
	and recall that $\omega_e$ is the union of the two elements adjacent to $e\in\Eh^0$ and that the shape regularity property guarantees that $h_e \approx h_{\K}$ for $\K \subset \omega_e$. This, together with the assumption $|v_h|_{H_h^2(\Omega)} \leq C$ and the shape regularity of $\{ \Th \}_{h>0}$, yields
$$
	|T_4| \lesssim  \Big(\sum_{e\in\Eh^0}\|\h^{-\frac12}\jump{\nabla_hv_h}\|_{L^2(e)}^2\Big)^{\frac{1}{2}}\left( \|\mathcal{I}_h\phi-\phi\|_{L^2(\Omega)}+\|\h \nabla(\mathcal{I}_h\phi-\phi)\|_{L^2(\Omega)}\right)\rightarrow 0 \qquad \textrm{as }h\rightarrow 0.
$$
	Similarly, $T_5 \to 0$ as $h\to 0$. 
	Gathering the above relations for $T_1,...,T_5$, we obtain $\int_{\Omega}H_h(v_h):\phi\to\int_{\Omega} D^2v:\phi$ as $h \to 0$, which is the desired weak convergence property. 
\end{proof}

\begin{proof}[Proof of Lemma \ref{strong-conv}]
  We split the proof into three steps.

  \medskip\noindent
  {\it Step 1: strong convergence of the broken Hessian.}
	We recall that for $k' \geq 1$, the Lagrange interpolation operator $\mathcal I_h^{k'} v$ is locally $H^2$ stable
	\begin{equation} \label{eqn:H2stab}
	\|D^2 \mathcal I_h^{k'} w \|_{L^2(T)}\lesssim |w|_{H^2(T)} \qquad  \forall\, w \in H^2(T), \quad \forall\, T\in\Th, 
	\end{equation}
	and satisfies the following approximation estimates for $0\leq m \leq k'+1$
	\begin{equation}\label{interpolation-estimate}
	\|w-\mathcal I_h^{k'} w\|_{H^m(T)}\lesssim h_T^{k'+1-m} |w|_{H^{k'+1}(T)} \qquad \forall\, w \in H^{k'+1}(T).
	\end{equation}
	These estimates are less standard and somewhat more intricate for subdivisions made of quadrilaterals; we refer to Section 9 of \cite{bonito2018} for their proofs.
	
	We now argue by density. Let $v^{\epsilon}\in C^{\infty}(\Omega)$ be a smooth mollifier of $v$ such that $v^{\epsilon}\rightarrow v$ in $H^2(\Omega)$ as $\epsilon\rightarrow 0$. We also set $v_h^\epsilon:= \mathcal I_h^k v^\epsilon$ and write $v_h-v = v_h - v_h^\epsilon + v_h^\epsilon - v^\epsilon + v^\epsilon - v$ so that employing \eqref{eqn:H2stab}, \eqref{interpolation-estimate} and summing over $T\in \Th$ yield
	\begin{align*}
	\|D_h^2v_h- D^2 v\|_{L^2(\Omega)} \leq C \left( |v-v^{\epsilon}|_{H^2(\Omega)}+ h|v^{\epsilon}|_{H^3(\Omega)}\right)
	\end{align*} 
	for a constant $C$ independent of $h$ and $\epsilon$ because $k \ge 2$. Therefore, for every $\eta>0$, we choose $\epsilon$ sufficiently small so that $C |v-v^{\epsilon}|_{H^2(\Omega)} \leq \eta /2$ and then $h$ sufficiently small so that $ C h|v^{\epsilon}|_{H^3(\Omega)} \leq \eta /2$ to arrive at 
	\begin{align*}
	\|D_h^2v_h- D^2 v \, \|_{L^2(\Omega)} \leq \eta.
	\end{align*} 
	This shows the strong convergence of $D_h^2 v_h$ towards $D^2 v$ in $[L^2(\Omega)]^{2\times 2}$ as $h\rightarrow 0$.

\medskip\noindent
{\it Step 2: strong convergence of lifting operators.}        
We now prove that $R_h(\jump{\nabla_hv_h} )\to 0$ but omit dealing with $B_h(\jump{v_h})\to0$, whose proof follows the same idea. Lemma \ref{L2bound-lifting} (stability of lifting operators) implies
\begin{equation*}
\|R_h(\jump{\nabla_hv_h})\|_{L^2(\Omega)}\lesssim \|\h^{-\frac12}\jump{\nabla_h v_h}\|_{L^2(\Gh^0)} = \|\h^{-\frac12}\jump{\nabla_h(v_h-v)}\|_{L^2(\Gh^0)}
\end{equation*}
because $\jump{\nabla v}|_e=0$ for $e\in\Eh^0$.
Thanks to the scaled trace inequality \eqref{traceineq}, the property $\mathcal{I}^k_h(v_h - v)=0$, and the interpolation estimates \eqref{interpolation-estimate}, we obtain for any $e\in \Eh^0$
\begin{align*}
\|\h^{-\frac12}\jump{\nabla_h(v_h-v)}\|^2_{L^2(e)} &\lesssim h_e^{-2}\|\nabla_h(v_h - v)\|_{L^2(\omega_e)}^2+\|D^2_h(v_h - v)\|_{L^2(\omega_e)}^2\\
&\lesssim \|\h^{-2} \nabla_h(v_h - v-\mathcal{I}^k_h(v_h - v))\|_{L^2(\omega_e)}^2+\|D^2_h(v_h - v)\|_{L^2(\omega_e)}^2\\
&\lesssim \|D_h^2 v_h - D^2 v \|_{L^2(\omega_e)}^2.
\end{align*}
Summing over all interior edges and using the shape-regularity of $\{ \Th \}_{h>0}$, we find that
\begin{equation*}
\|R_h(\jump{\nabla_hv_h})\|_{L^2(\Omega)}\lesssim \|D_h^2 v_h - D^2 v \, \|_{L^2(\Omega)} \to0 \qquad\mbox{as } h \rightarrow 0,
\end{equation*}
which is the desired property.

\medskip\noindent
{\it Step 3: strong convergence of discrete Hessian.}        
  The strong convergence \eqref{eqn:H_strong} of the reconstructed Hessian $H_h(v_h)$ to $D^2v$ follows from the definition \eqref{def:discrHess} of $H_h(v_h)$ and the strong convergence of $D^2_h v_h$, $R_h(\jump{\nabla_hv_h} )$, and $B_h(\jump{v_h} )$ established in Steps 1 and 2.
  \end{proof}

%%%%%%%%%%%%%%%%%%%%%%%%%%%%%%%%%%%%%%%%%%%%%%%%%%%%%%%%%%%%%%%%%%%%%%%%%%%
\section{Dirichlet boundary conditions and forcing term} \label{sec:Dirichlet_BC}
%%%%%%%%%%%%%%%%%%%%%%%%%%%%%%%%%%%%%%%%%%%%%%%%%%%%%%%%%%%%%%%%%%%%%%%%%%%

We have considered so far \emph{free boundary} conditions. In this case, a physically necessary assumption is that any external forcing $\vf\in[L^2(\Omega)]^3$ has zero average, i.e., $\strokedint_{\Omega}\vf=0$, for otherwise there is no equilibrium configuration. To see this, suppose that a non-zero external force $\vf$ is added to the discrete energy \eqref{e:Eh_intro} as well as to the right-hand side of the discrete gradient flow \eqref{gf:system}. Repeating the proof of Proposition \ref{ave} (evolution of averages), one can easily show that each step of the gradient flow yields
\begin{equation*}
\int_{\Omega}\delta\vy_{h }^{n+1}=\tau\int_{\Omega}\vf.
\end{equation*} 
Consequently, for $\|\delta\vy_{h }^{n+1}\|_{L^2(\Omega)}\to0$ as $n\to\infty$ it is necessary that $\int_{\Omega}\vf = 0$. In previous sections we assume, for simplicity of presentation, that $\vf=\mathbf{0}$ but the theory extends to $\int_{\Omega}\vf = 0$.

In this section, we prescribe Dirichlet boundary conditions on a portion $\Gamma^D\neq\emptyset$ of the boundary $\partial\Omega$, namely
\begin{equation} \label{def:BC_GammaD}
\vy=\vvarphi \quad \mbox{and} \quad \nabla\vy=\Phi \quad \mbox{on } \Gamma^D,
\end{equation}
where $\vvarphi\in[H^1(\Omega)]^3$ and $\Phi\in[H^1(\Omega)]^{3\times 2}$ is such that $\Phi^T\Phi=g$ a.e. in $\Omega$. 
In this case, we redefine the admissible set $\A$ as
\begin{equation} \label{def:admiss_BC}
\A:=\A(\vvarphi,\Phi):=\left\{\vy\in \V(\vvarphi,\Phi): \, \nabla\vy^T\nabla\vy=\g \quad \mbox{a.e. in } \Omega\right\},
\end{equation}
where $\V(\vvarphi,\Phi)$ is the affine manifold
\begin{equation} \label{def:space_BC}
\V(\vvarphi,\Phi):=\left\{\vy\in [H^2(\Omega)]^3: \, \restriction{\vy}{\Gamma^D}=\vvarphi, \, \restriction{\nabla\vy}{\Gamma^D}=\Phi\right\}.
\end{equation}
Furthermore, we also subtract the term $\int_{\Omega}\vf\cdot\vy$ from the energy $E(\vy)$ defined in \eqref{def:Eg_D2y}, where $\vf\in[L^2(\Omega)]^3$ is a given forcing function.
We resort to a Nitsche approach to impose the essential boundary conditions \eqref{def:BC_GammaD}. As a consequence, they do not need to be included as a strong constraint in the discrete counterpart of the admissible set $\A$.
This turns out to be an advantage for the analysis of the method \cite{bonito2018}.

Let $\Eh:=\Eh^0\cup\Eh^b$ be the set of edges of the subdivision $\Th$ decomposed into interior edges $\Eh^0$ and boundary edges $\Eh^{b}$. We assume that the Dirichlet boundary $\Gamma^D$ is compatible with $\Th$, $h>0$, in the sense that $\Gamma^D=  \{ \mathbf{x} \in e \ : e \in \mathcal \E_h^D \}$ for some $\E_h^D \subset \E_h^b$. The set of {\it active edges}, across which jumps and averages will be computed, and associated skeleton are denoted by $\Eh^a:=\Eh^0\cup\Eh^{D}$ and $\Gh^a := \Gh^0 \cup \Gamma^D$. For interior edges $e\in\Eh^0$, jumps and averages are defined (component-wise) by \eqref{def:jump-avrg}. For Dirichlet boundary edges $e\in\Eh^D$, we define averages by $\restriction{\avrg{\vv_h}}{e} := \vv_h$ and $\restriction{\avrg{\nabla_h\vv_h}}{e} := \nabla_h\vv_h$, while jumps are given by
\begin{equation}\label{E:bd-jumps}
\restriction{\jump{\vv_h}}{e} := \vv_h - \vvarphi,
\quad
\restriction{\jump{\nabla_h \vv_h}}{e} := \nabla_h \vv_h - \Phi.
\end{equation}
To simplify the notation, we define
\begin{equation}\label{discrete-set}
\V_h^k (\vvarphi,\Phi) := \Big\{ \vv_h\in [\V_h^k]^3: \
[\vv_h], \, [\nabla_h \vv_h] \text{ given by \eqref{E:bd-jumps} for all } e\in\Eh^D   \Big\}.
\end{equation}
We insist that $\V_h^k(\cdot,\cdot)$ coincides with $[\V_h^k]^3$  but the former carries the notion of boundary jumps.
In addition, for $\vv_h \in \V_h^k(\vvarphi,\Phi)$, $\|[\vv_h]\|_{L^2(\Gamma^D)} \to 0$ and $\|[\nabla_h \vv_h]\|_{L^2(\Gamma^D)} \to 0$ imply $\vv_h\to \vvarphi$ and $\nabla_h \vv_h \to\Phi$ in $L^2(\Gamma^D)$ as $h\to0$, thereby relating $\V_h^k(\vvarphi,\Phi)$ and $\V(\vvarphi,\Phi)$. 

The definitions \eqref{def:lift_re} and \eqref{def:lift_be} of lifting operators for interior edges $e\in\Eh^0$ extend trivially to boundary edges $e\in\Eh^D$, in which case $\omega_e$ reduces to a single element.
The discrete energy $E_h(\vy_h)$ then reads as \eqref{e:Eh_intro} upon (i) replacing $\Eh^0$ by $\Eh^a$ in the definition \eqref{E:global-lifting} of lifting operators, which affects the discrete Hessian operator \eqref{def:discrHess};
(ii) replacing $\Gamma_h^0$ by $\Gamma_h^a$ in the stabilization terms of $E_h$ and subtracting the forcing term $\int_{\Omega}\vf\cdot\vy_h$; and (iii) replacing the discrete admissible set by
	\begin{equation*}
	\A_{h,\veps}^k:=\{\vy_h\in \V^k_h(\vvarphi,\Phi)\ :\  D_h(\vy_h)\leq \veps\}.
	\end{equation*}
	Finally, we note that for non-homogeneous Dirichlet data, $|\cdot|_{H_h^2(\Omega)}$ defined in \eqref{e:H2semi} is no longer a semi-norm on $\V_h^k(\vvarphi,\Phi)$ since the boundary data are encoded in the jumps across $e\in\Eh^D$, but it is a norm on $\V^k_h(\bz,\bz)$ by definition.

All statements and proofs presented earlier extend to Dirichlet boundary conditions with minor modifications. To be concise, we summarize below the key differences between Dirichlet and \emph{free boundary} conditions.

\begin{enumerate}[$\bullet$]
  \medskip
		\item \textbf{Discrete Poincar\'e-Friedrichs inequality}:  
		For any $\vv_h\in\V_h^k(\vvarphi,\Phi)$ we have
		\begin{equation} \label{eqn:FP}
		\|\vv_h\|_{L^2(\Omega)}+\|\nabla_h\vv_h\|_{L^2(\Omega)}\lesssim|\vv_h|_{H_h^2(\Omega)}+\|\vvarphi\|_{H^1(\Omega)}+\|\Phi\|_{H^1(\Omega)};
		\end{equation}
		see \cite{bonito2018}. In contrast to \eqref{eqn:bound_noBC}, the term $\|\vv_h\|_{L^2(\Omega)}$ is not needed to bound $\|\nabla_h\vv_h\|_{L^2(\Omega)}$.
		
	    \medskip  
	    \item \textbf{Weak convergence of discrete Hessian}: Let $\{\vv_h\}_{h>0}\subset\V^k_h(\vvarphi,\Phi)$ satisfy $\|\vv_h\|_{H_h^2(\Omega)}\lesssim 1$ uniformly in $h$ and $\vv_h\rightarrow\vv$ in $L^2(\Omega)$ as $h\rightarrow 0$ for some $\vv\in[H^2(\Omega)]^3$. We proceed as in Lemma \ref{weak-conv} (weak convergence of $H_h$) given in Appendix \ref{sec:weak_strong} to prove that $H_h(\vv_h)\rightharpoonup D^2 \vv$ in $L^2(\Omega)$, except that integrating $\int_{\Omega}H_h(\vv_h):\vphi$ by parts twice gives the extra term
        \begin{equation*}\label{eqn:T6}
		T_6:=\sum_{e\in\Eh^D}\int_e(\vv_h-\vvarphi)\cdot\avrg{\di\mathcal{I}_h\vphi}\vn_e, \qquad \vphi\in[C_0^{\infty}(\Omega)]^{3\times 2\times 2}.
		\end{equation*}
        Its convergence is standard by uniform boundedness of $|\vv_h|_{H^2_h(\Omega)}$ and the trace inequality.

        \medskip  
	    \item \textbf{Strong convergence of discrete Hessian}: Let $\vv\in[H^2(\Omega)]^3$ satisfy $\vv=\vvarphi$ and $\nabla\vv=\Phi$ on $\Gamma^D$. The proof of Lemma \ref{strong-conv} (strong convergence of $H_h$) follows as in Appendix \ref{sec:weak_strong}.

        \medskip
	    \item \textbf{Coercivity}: The analogue of Theorem \ref{coercivity:plates} (coercivity of $H_h$) involves the boundary data and external forcing term, namely for any $\vy_h\in\V_h^k(\vvarphi,\Phi)$ and any $\gamma_0,\gamma_1>0$ we have
		\begin{equation} \label{eqn:coercivity_diri}
		|\vy_h|_{H_h^2(\Omega)}^2 \lesssim E_h(\vy_h)+\|\vvarphi\|_{H^1(\Omega)}^2+\|\Phi\|_{H^1(\Omega)}^2+\|\vf\|^2_{L^2(\Omega)};
		\end{equation}
		the proof is similar to \cite[Lemma 2.3]{bonito2018}. Note that now $E_h(\vy_h)$ is bounded from below but not necessarily by zero \cite{bonito2018}.

		\medskip
		\item \textbf{Compactness}: In contrast to Lemma \ref{l:compactness} (compactness), we do not need to consider a \emph{shifted} sequence with vanishing mean value. If a sequence $\{\vv_h\}_{h>0}\subset\V^k_h(\vvarphi,\Phi)$ satisfies $|\vv_h|_{H_h^2(\Omega)}\lesssim 1$, then there exists $\vv\in \V(\vvarphi,\Phi)$ such that, up to a subsequence, $\vv_h\to\vv$ in $[L^2(\Omega)]^3$ and $\nabla_h\vv_h\to\nabla\vv$ in $[L^2(\Omega)]^{3\times 2}$ as $h\rightarrow 0$. A proof of this statement follows along the lines of \cite[Proposition 5.1]{bonito2018}, thanks to \eqref{eqn:FP}. Therefore, Proposition \ref{prop:existence_yh} and Theorems \ref{conv_global_min} and \ref{lim-inf} are valid without removing the mean of $\vv_h$.

		\medskip  
		\item \textbf{Convergence of forcing term}: The addition of the forcing term in the energy does not affect Theorem \ref{lim-inf} (lim-inf of $E_h$) and Theorem \ref{lim-sup} (lim-sup of $E_h$) because $\int_{\Omega}\vf\cdot\vy_h\to\int_{\Omega}\vf\cdot\vy$ when $\vy_h\to\vy$ in $[L^2(\Omega)]^3$ as $h\to 0$. 

		\medskip
		\item \textbf{Gradient flow}: For \emph{free boundary} conditions, the gradient flow metric $\|\cdot\|_{H^2_h(\Omega)}$ contains an $L^2$ term to guarantee solvability of \eqref{gf:system} (see Remark \ref{kernel}) and control of the average of iterates (see Proposition \ref{ave}). In contrast, since $|\cdot|_{H_h^2(\Omega)}$ defined in \eqref{e:H2semi} is a norm on $\V_h^k(\mathbf{0},\mathbf{0})$ when Dirichlet boundary conditions are imposed, the extra $L^2$ term is no longer needed. The counterpart of the gradient flow of Section \ref{sec:GF} reads: given $\vy_h^0\in\A_{h,\veps_{0}}^k$ and $\tau>0$, iteratively compute $\vy_h^{n+1}:=\vy_h^{n}+\delta\vy_h^{n+1}\in \V_h^k(\vvarphi,\Phi)$ with $\delta\vy_h^{n+1}\in\mathcal{F}_h(\vy_h^n)$ satisfying
		\begin{equation} \label{gf:system_BC}
		\tau^{-1}\langle\delta\vy_h^{n+1},\vv_h\rangle_{H_h^2(\Omega)} +  a_h(\delta\vy_h^{n+1},\vv_h) = (\vf,\vv_h)_{L^2(\Omega)}-a_h(\vy_h^n,\vv_h) \qquad \forall\, \vv_h \in \mathcal{F}_h(\vy_h^n),
		\end{equation}
		where the tangent space is given by
         \[
         \mathcal{F}_h(\vy_h^n):=\left\{ \vv_h\in \V_h^k(\bz,\bz): \,\,
		\int_{\K}\nabla\vv_h^T\nabla\vy_h^n+(\nabla\vy_h^n)^T\nabla\vv_h=0 \quad \forall\, \K\in\Th\right\}
         \]
        and $\langle\cdot,\cdot\rangle_{H_h^2(\Omega)}$
		is defined in \eqref{def:H2bilinear}.
		Note that the Dirichlet data are implicitly contained in $a_h(\vy_h^n,\vv_h)$ through the liftings of the boundary data that appear in $H_h(\vy_h)$.
		
		For Dirichlet boundary conditions, the counterpart of the control of defect \eqref{eqn:control_defect} reads
		\begin{equation} \label{eqn:control_defect_diri}
		D_h(\vy_h^n)\le \veps_0+c\tau \big(E_h(\vy_h^0)+\wt c \big).
		\end{equation}
		Here $c>0$ is the hidden constant of \eqref{eqn:FP}, which depends only on $\Omega$ and $\Gamma^D$, while $\wt c\ge 0$ depends only on $\mu$, $g$, $\|\vvarphi\|_{H^1(\Omega)}$, $\|\Phi\|_{H^1(\Omega)}$, $\|\vf\|_{L^2(\Omega)}$ and the constant $C(\gamma_0,\gamma_1)$ that appears in \eqref{eqn:ineq_h2} (with $\Eh^0$ replaced by $\Eh^a$). The proof relies on \eqref{eqn:FP} and \eqref{eqn:coercivity_diri} to deal with the forcing term; the proof is similar to \cite[Lemma 3.4]{bonito2018}.
		
\medskip
\item \textbf{Preprocessing}: The prestrain defect of the iterates $\vy_h^n$ produced by \eqref{gf:system_BC} is controlled by \eqref{eqn:control_defect_diri}. In this case, the energy $E_h(\vy_h^0)$ is also affected by how well $\vy_h^0$ satisfies the prescribed boundary conditions as $E_h$ contains the terms $(\nabla_h\vy_h^0-\Phi)$ and $(\vy_h^0-\vvarphi)$ in the discrete Hessian and the stabilization terms. 
		Therefore, since flat surfaces are stationary for the \emph{metric preprocessing} step \cite[Section 3.3]{BGNY2020_comp}, we first solve the bi-Laplacian problem
		\begin{equation} \label{bi-Laplacian}
		\Delta^2\widehat \vy = \widehat\vf \quad \mbox{in } \Omega, \quad \nabla\widehat \vy = \Phi \quad \mbox{on } \Gamma^D, \quad \widehat \vy = \vvarphi \quad \mbox{on } \Gamma^D,
		\end{equation}
		where typically $\widehat\vf=\mathbf{0}$. Note that this vector-valued problem is well-posed and gives, in general, a non-flat surface $\widehat{\vy}(\Omega)$. Using the LDG method with boundary conditions imposed \emph{\`a la Nitsche} to approximate the solution $\widehat\vy\in\V(\vvarphi,\Phi)$ of \eqref{bi-Laplacian}, we thus solve:
		\begin{equation}\label{bi-Laplacian-system}
		\widehat \vy_h\in\V^k_h(\vvarphi,\Phi): \quad
		c_h(\widehat \vy_h,\vv_h) = (\widehat\vf,\vv_h)_{L^2(\Omega)} \qquad\forall\,\vv_h\in\V^k_h(\mathbf{0},\mathbf{0}).
		\end{equation}
		Here
		\begin{equation}\label{bi-Laplacian_bilinear}
		\begin{split}
		c_h(\vw_h,\vv_h) & := \big(H_h(\vw_h), H_h(\vv_h) \big)_{L^2(\Omega)}  \\
		&+\widehat\gamma_1\big(\h^{-1}\jump{\nabla_h\vw_h},\jump{\nabla_h\vv_h}\big)_{L^2(\Gh^a)}
		+\widehat\gamma_0 \big(\h^{-3}\jump{\vw_h},\jump{\vv_h}\big)_{L^2(\Gh^a)},
		\end{split}
		\end{equation}
		where $\widehat{\gamma}_0$ and $\widehat{\gamma}_1$ are positive stabilization parameters that may not necessarily be the same as their counterparts $\gamma_0$ and $\gamma_1$ used in the definition \eqref{e:Eh_intro} of $E_h$. Then $\wh\vy_h$ satisfies (approximately) the given boundary conditions on $\Gamma^D$ and $\wh\vy_h(\Omega)$ is, in general, non-flat. To generate a deformation with small prestrain defect, we may then apply a \emph{metric preprocessing} step similar to the one proposed in Section \ref{sec:preprocessing} starting from $\wt \vy_h^0=\wh \vy_h\in\V_h^k(\vvarphi,\Phi)$. Then all the results of Section \ref{sec:preprocessing} extend to Dirichlet boundary conditions upon replacing $\|\cdot\|_{H_h^2(\Omega)}$ by $|\cdot|_{H_h^2(\Omega)}$ and, wherever appropriate, $\Eh^0$ and $\Gh^0$ by $\Eh^a$ and $\Gh^a$, respectively. 
		In particular, the  boundary conditions satisfied by the initial deformation $\wt \vy_h^0\in\V_h^k(\vvarphi,\Phi)$ are approximately preserved during the \emph{metric preprocessing} step \eqref{pre-gf:variation2}. The latter consists of seeking increments $\delta \vy_h^n \in \V_h^k(\mathbf 0,\mathbf 0)$ minimizing the linearized stretching energy using the metric
		$$
(D^2_h v_h,D^2_h w_h)_{L^2(\Omega)}
  +(\h^{-1}\jump{\nabla_h v_h},\jump{\nabla_h w_h})_{L^2(\Gh^a)}+(\h^{-3}\jump{v_h},\jump{w_h})_{L^2(\Gh^a)}.
		$$
		
		Moreover, Remark \ref{rem:Eh_unif} (choice of $\sigma_h$) applies and leads to an output $\wt\vy_h^{n_h}\in\V^k_h(\vvarphi,\Phi)$ such that $E^b_h(\wt\vy_h^{n_h})\lesssim 1$ and $D_h(\wt\vy_h^{n_h}) \lesssim \sigma_h^{\frac12}$ provided that $E_h^p(\wt \vy_h^{n_h}) \lesssim \sigma_h$.
	\end{enumerate}
%%%%%%%%%%%%%%%%%%%%%%%%%%%%%%%%%%%%%%%%%%%%%%%%%%%%%%%%%%%
\section*{Acknowledgment}
Andrea Bonito and Diane Guignard were partially supported by the NSF Grant DMS-1817691 and DMS-2110811.
Ricardo H. Nochetto and Shuo Yang were partially supported by the NSF Grants DMS-1411808 and DMS-1908267.
%%%%%%%%%%%%%%%%%% 

\bibliographystyle{amsplain}        
\bibliography{abrevjournal,bibliography}

%%%%%%%%%%%%%%%%%%%%%%%%%%%%%%%%%%%%%%%%%%%%%%%%%%%%%%%%%%%%%%%%%%%%%%%%%%%%%
\end{document}